\newtheorem{Theo}{Theorem}[section]
\newtheorem{Prop}[Theo]{Proposition}
\newtheorem{Coro}[Theo]{Corollary}
\newtheorem{Lemm}[Theo]{Lemma}
\newtheorem{Rema}[Theo]{Remark}
\newcommand{\Hcal}{\mathcal{H}}
\newcommand{\T}{\mathbb{T}}
\newcommand{\Bcal}{\mathcal{B}}
\newcommand{\C}{\mathbb{C}}
\newcommand{\Z}{\mathbb{Z}}
\def\N{\mathbb{ N}}
\def\R{\mathbb{ R}}
\begin{document}

\title{Variants of a theorem of Helson on general Dirichlet series}

\author[Defant]{Andreas Defant}
\address[]{Andreas Defant\newline  Institut f\"{u}r Mathematik,\newline Carl von Ossietzky Universit\"at,\newline
26111 Oldenburg, Germany.
}
\email{defant@mathematik.uni-oldenburg.de}

\author[Schoolmann]{Ingo Schoolmann}
\address[]{Ingo Schoolmann\newline  Institut f\"{u}r Mathematik,\newline Carl von Ossietzky Universit\"at,\newline
26111 Oldenburg, Germany.
}
\email{ingo.schoolmann@uni-oldenburg.de}

\maketitle

\begin{abstract}
\noindent A result of Helson on general Dirichlet series $\sum a_{n} e^{-\lambda_{n}s}$ states that, whenever $(a_{n})$ is $2$-summable and  $\lambda=(\lambda_{n})$ satisfies a certain condition introduced by Bohr, then for almost all homomorphism $\omega \colon (\R,+) \to \T$ the Dirichlet series $\sum a_{n} \omega(\lambda_{n})e^{-\lambda_{n}s}$
converges on the open right half plane $[Re>0]$.
For ordinary Dirichlet series $\sum a_n n^{-s}$ Hedenmalm and Saksman related  this result with the famous Carleson-Hunt theorem on pointwise convergence of Fourier series, and Bayart extended
it within his theory of Hardy spaces $\mathcal{H}_p$ of such series.
The aim here is to prove variants of Helson's theorem  within our recent theory of Hardy spaces $\mathcal{H}_{p}(\lambda),\,1\le p \le \infty,$
of general Dirichlet series.
To be more precise, in the reflexive case $1 < p < \infty$ we extend Helson's result to Dirichlet series
in $\mathcal{H}_{p}(\lambda)$ without any further condition on the frequency $\lambda$, and in the non-reflexive case  $p=1$ to
  the wider class of frequencies satisfying the so-called Landau condition (more general than Bohr's condition). In both cases we add relevant maximal inequalities.
    Finally, we give several applications to the structure theory of
  Hardy spaces of general Dirichlet series.

\end{abstract}


\noindent
\renewcommand{\thefootnote}{\fnsymbol{footnote}}
\footnotetext{2010 \emph{Mathematics Subject Classification}: Primary 43A17, Secondary  30B50, 43A50} \footnotetext{\emph{Key words and phrases}: general Dirichlet series, Hardy spaces, almost everywhere convergence, maximal inequalities, completeness.
} \footnotetext{}

\section{\bf Introduction}

A general Dirichlet series is a (formal) series of the form $\sum a_n e^{-\lambda_n s}$, where $s
$ is a complex variable, $(a_n)$  a sequence of complex coefficients (called Dirichlet coefficients), and $\lambda=(\lambda_n)$ a frequency
 (a strictly increasing non-negative  real sequence which tends to $+\infty$). Fixing a frequency $\lambda$, we call $D=\sum a_{n}e^{-\lambda_{n}s}$ a $\lambda$-Dirichlet series, and $\mathcal{D}(\lambda)$ denotes the space of all these  series. All basic information on general Dirichlet series can be found in  \cite{HardyRiesz} or \cite{Helson}. In  particular  that convergence
 of  $D=\sum a_{n}e^{-\lambda_{n}s}$  in $s_0 \in \C$ implies convergence in all $s\in \C$
  with $Re s > Re s_0$, and that  the limit function $f(s) = \sum_{n=1}^{\infty} a_{n}e^{-\lambda_{n}s}$ of $D$
  is holomorphic on the half plane $[Re > \sigma_c(D)]$, where
  \[
  \sigma_{c}(D)=\inf\left \{ \sigma \in \R \mid D \text{ converges on } [Re>\sigma] \right\}
  \]
determines the so-called abscissa of convergence.

 \subsection{Helson's theorem}  Let us  start with some details on the state of art of Helson's  result mentioned in the abstract. We first consider the  frequency $\lambda=(\log n)$, which is of special interest, since it generates so-called ordinary Dirichlet series $\sum a_{n}n^{-s}$. As usual (see e.g. \cite{Defant}, \cite{HLS}, or \cite{QQ}), we denote by $\mathcal{H}_2$ the Hilbert space of all Dirichlet series $\sum a_{n}n^{-s}$ with  2-summable coefficients, that is $(a_{n}) \in \ell_{2}$.

Recall that the infinite dimensional polytorus $\T^{\infty}:=\prod_{n=1}^{\infty} \T$
forms a compact abelian group (with its natural group structure), with the normalized Lebesgue measure $dz$ as its Haar measure.
Denote by  $\Xi$ the set of all completely multiplicative characters $\chi\colon \N \to \T$
(that is $\chi(nm)=\chi(n)\chi(m)$ for all $m$,$n$),
which with the pointwise multiplication forms an abelian  group.
Denote by $\mathfrak{p}=(p_{n})$ the sequence of prime numbers.
Looking at the group isomorphism
\begin{align*}
\iota\colon \Xi \to \T^{\infty}, ~~\chi \mapsto (\chi(p_{n})),
\end{align*}
 we see that $\Xi$ also  forms a compact abelian group, and its Haar measure $d \chi$ is the push forward measure of $dz$  through $\iota^{-1}$.

  The following result of
  Helson from \cite{Helson3} (see also
\cite[Theorem 4.4]{HLS}) is our starting point.

 \begin{Theo} \label{HelsonintroHelson}
 Given $D= \sum a_{n}n^{-s}\in \mathcal{H}_2$, for almost all $\chi \in \Xi$ the Dirichlet series
  $D^{\chi} =\sum a_{n} \chi(n) n^{-s}$ converges on the open right half plane $[\text{Re}>0]$.
  \end{Theo}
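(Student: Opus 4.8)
The plan is to combine the elementary theory of general Dirichlet series with the Carleson--Hunt theorem on almost everywhere convergence of Fourier series, in the spirit of the work of Hedenmalm and Saksman. As a first step, note that since convergence of a Dirichlet series at one point forces convergence on the half plane to its right, it is enough to produce, for each rational $\sigma>0$, a set $A_{\sigma}\subseteq\Xi$ of full Haar measure on which the numerical series $\sum_{n}a_{n}\chi(n)n^{-\sigma}$ converges: on $\bigcap_{\sigma}A_{\sigma}$ (intersection over the rationals $\sigma>0$), a full-measure set, one then has $\sigma_{c}(D^{\chi})\le\sigma$ for every such $\sigma$, hence $\sigma_{c}(D^{\chi})\le 0$. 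So fix $\sigma>0$, put $c_{n}:=a_{n}n^{-\sigma}$ (so that $(c_{n})\in\ell_{2}$), and the task becomes to show that $\sum_{n}c_{n}\chi(n)$ converges for almost every $\chi\in\Xi$.

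\emph{Transfer to the polytorus.} Via the isomorphism $\iota\colon\Xi\to\T^{\infty}$ and the prime factorization $n=\prod_{k}p_{k}^{\alpha_{k}(n)}$, the evaluations $\chi\mapsto\chi(n)$ correspond to the characters $z\mapsto z^{\alpha(n)}$ of $\T^{\infty}$, pairwise distinct for distinct $n$. Hence $\sum_{n}c_{n}z^{\alpha(n)}$ is the Fourier series of a function $g\in L^{2}(\T^{\infty})\cong L^{2}(\Xi)$, and what remains is to prove that the partial sums $S_{N}g(z):=\sum_{n\le N}c_{n}z^{\alpha(n)}$ converge for almost every $z\in\T^{\infty}$.

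\emph{The maximal inequality.} The decisive step will be the a priori bound
\[
\Big\|\sup_{N\in\N}|S_{N}g|\Big\|_{L^{2}(\T^{\infty})}\le C\,\|g\|_{L^{2}(\T^{\infty})},
\]
with an absolute constant $C$; granting it, almost everywhere convergence follows in the usual fashion, since Dirichlet polynomials are dense in $L^{2}(\T^{\infty})$, on such a polynomial $S_{N}$ is eventually the identity, and the maximal inequality absorbs the approximation error. Note that mere orthogonality of the system $\{z^{\alpha(n)}\}$ together with $(c_{n})\in\ell_{2}$ does \emph{not} suffice here (for an arbitrary orthonormal system one would need a logarithmic weight on the coefficients, by Menshov--Rademacher); the arithmetic structure of the exponent vectors $\alpha(n)$ must be exploited, and this is where Carleson--Hunt enters. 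Establishing this inequality is the heart of the matter and the step I expect to be the main obstacle.

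\emph{From Carleson--Hunt to the maximal inequality.} I would first reduce to functions $g$ depending on finitely many variables $z_{1},\dots,z_{d}$, that is, to coefficients supported on the $d$-smooth integers $n=p_{1}^{\alpha_{1}}\cdots p_{d}^{\alpha_{d}}$; the relevant partial sums then run over the ``simplicial'' sets $\{\alpha\in\Z_{\ge 0}^{d}:\sum_{k}\alpha_{k}\log p_{k}\le\log N\}$, which are not boxes, so the $d$-dimensional Carleson--Hunt theorem does not apply directly. The way out is to descend to a single real variable through the Kronecker flow $\phi_{d}\colon t\mapsto(e^{\,it\log p_{1}},\dots,e^{\,it\log p_{d}})$, under which $z^{\alpha(n)}$ becomes $e^{\,it\log n}$ and the condition $n\le N$ becomes the single frequency cut-off $\log n\le\log N$, so that $(S_{N}g)\circ\phi_{d}$ is a Carleson-type partial sum of the almost periodic function $t\mapsto\sum_{n}c_{n}e^{\,it\log n}$. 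Since $\log p_{1},\dots,\log p_{d}$ are rationally independent, $\phi_{d}$ equidistributes, so $\|\sup_{N}|S_{N}g|\|_{L^{2}(\T^{d})}$ is an average along the orbits of $\phi_{d}$ of the corresponding one-dimensional Carleson maximal function; a standard localization then bounds it by the $L^{2}(\R)$-operator norm of the continuous Carleson maximal operator $h\mapsto\sup_{R>0}\big|\int_{-R}^{R}\widehat{h}(\xi)\,e^{2\pi ix\xi}\,d\xi\big|$, uniformly in $d$. That norm is finite by Carleson--Hunt, and letting $d\to\infty$ gives the maximal inequality, hence the theorem. The delicate points to handle carefully are the passage between the torus, the flow and the line for a \emph{maximal} operator, and the uniformity of the constants in $d$.
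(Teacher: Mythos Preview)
Your overall architecture matches the paper's: reduce Helson's theorem to a Carleson--Hunt type maximal inequality on the polytorus, then conclude by density of Dirichlet polynomials. The reduction to rational $\sigma>0$ at the start is harmless but unnecessary --- once the maximal inequality holds for every $g\in L^{2}(\T^{\infty})$, you get a.e.\ convergence of $S_{N}g$ at $\sigma=0$ directly, hence $\sigma_{c}(D^{\chi})\le 0$ for almost every $\chi$.

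The genuine divergence is in how you propose to prove the maximal inequality on $\T^{d}$. The paper (following Hedenmalm--Saksman and Duy, who in turn use a device of Fefferman) does \emph{not} descend to $\R$ via the Kronecker flow. Instead it approximates the direction vector $x$ by a rational vector $(q_{1}/Q,\dots,q_{d}/Q)$ with $\gcd(q_{1},q_{2})=1$, and then performs an integer unimodular change of variables $z\mapsto\Phi_{A}(z)$ on $\T^{d}$ (with $A$ having first row $(q_{1},\dots,q_{d})$ and $\det A=1$) that converts the half-space condition $\langle q,\alpha\rangle\le QS$ into the single-coordinate condition $\beta_{1}\le QS$. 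After this change the maximal function is a one-dimensional Carleson maximal function in $z_{1}$, with $z_{2},\dots,z_{d}$ as parameters, and the $\T$-version of Carleson--Hunt applies directly with constant $CH_{p}$ independent of $d$.

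Your Kronecker-flow route can be made to work, but the step you flag as ``standard localization'' is precisely where the labour hides, and it is not quite standard. The almost periodic function $t\mapsto g(\phi_{d}(t))$ is not in $L^{2}(\R)$; multiplying by a cutoff $\psi_{T}$ smears each frequency $\log n$ into a bump of width $\sim T^{-1}$, so the truncated Fourier integral $\int_{|\xi|\le R}\widehat{\psi_{T}\cdot(g\circ\phi_{d})}(\xi)\,e^{ix\xi}\,d\xi$ does not agree with $\psi_{T}(x)\,S_{N}g(\phi_{d}(x))$ near the cut $R=\log N$, and this boundary error must be controlled \emph{uniformly in $R$ and $T$} before the $L^{2}(\R)$ Carleson bound can be invoked. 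This is a maximal-operator transference (in the spirit of Coifman--Weiss) rather than a throwaway step. Fefferman's matrix trick sidesteps it entirely by staying on the torus and reducing to Carleson--Hunt on $\T$ rather than on $\R$; that is what buys the paper a short argument with no localization error to manage and automatic uniformity in $d$.
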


Helson actually proves an extended version of Theorem \ref{HelsonintroHelson} for general Dirichlet series. Therefore, given a frequency $\lambda$, let us define the space $\mathcal{H}_{2}(\lambda)$ of all (formal) $D=\sum a_{n}e^{-\lambda_{n}s}$ with $2$-summable Dirichlet coefficients. The substitute for $\Xi$ from Theorem \ref{HelsonintroHelson} is given by the so-called Bohr compactification $\overline{\R}$ of $(\R,+)$. Recall that $\overline{\R}$ is a compact abelian group, which may be defined to be the set of all homomorphism $\omega \colon (\R,+) \to \T$  together with the topology of pointwise convergence (i.e. $\overline{\R}$ is the dual group of $(\mathbb{R},+)$ endowed the discrete topology $d$). Additionally, Helson assumes Bohr's condition  $(BC)$ on $\lambda$, that is
\begin{equation} \label{BCHelson} \exists ~l = l (\lambda) >0 ~ \forall ~\delta >0 ~\exists ~C>0~\forall~ n \in \N: ~~\lambda_{n+1}-\lambda_{n}\ge Ce^{-(l+\delta)\lambda_{n}}.
\end{equation}
This condition was isolated by Bohr in \cite{Bohr}, and, roughly speaking it prevents the $\lambda_n$'s from getting too close too fast. Note that $\lambda=(\log n)$ satisfies $(BC)$ with $l=1$.
Then the extended version of  Helson's Theorem~\ref{HelsonintroHelson} reads as follows.

 \begin{Theo} \label{HelsonstheoremHelson}
 Let
$D=\sum a_{n} e^{-\lambda_{n}s} \in \mathcal{H}_{2}(\lambda)$ and $\lambda$ with $(BC)$. Then the Dirichlet series
$D^{\omega}=\sum a_{n} \omega(\lambda_{n}) e^{-\lambda_{n}s}$ converges on $[Re >0]$
for almost all $\omega \in \overline{\R}$.
  \end{Theo}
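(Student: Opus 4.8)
\medskip
\noindent\textbf{Proof proposal.} The plan is to recast the assertion as an almost everywhere convergence statement for an orthogonal series on the Bohr compactification $\overline{\R}$ and then to apply the classical Rademacher--Menshov theorem; this is close to Helson's original reasoning, and condition $(BC)$ will be used only through a crude lower bound on the growth of $(\lambda_{n})$. First I would fix positive reals $\sigma_{k}\downarrow 0$. By the elementary facts on general Dirichlet series recalled in the introduction, a $\lambda$-Dirichlet series converges on $[\re>0]$ as soon as it converges at each real point $\sigma_{k}$, so that
\[
\big\{\omega\in\overline{\R}\colon D^{\omega}\text{ converges on }[\re>0]\big\}=\bigcap_{k}\big\{\omega\in\overline{\R}\colon D^{\omega}\text{ converges at }\sigma_{k}\big\}.
\]
Each set on the right is Borel, because the partial sums $\omega\mapsto\sum_{n\le N}a_{n}\omega(\lambda_{n})e^{-\lambda_{n}\sigma_{k}}$ are finite linear combinations of characters of $\overline{\R}$, hence continuous. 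It therefore suffices to prove, for each fixed $\sigma>0$, that $\sum_{n}a_{n}\omega(\lambda_{n})e^{-\lambda_{n}\sigma}$ converges for almost every $\omega\in\overline{\R}$; intersecting over the countably many $\sigma_{k}$ then gives the theorem.

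The functions $e_{n}\colon\omega\mapsto\omega(\lambda_{n})$ are precisely the characters of the compact abelian group $\overline{\R}$ corresponding to the pairwise distinct points $\lambda_{n}$ of $\R$ with the discrete topology, so $(e_{n})_{n}$ is an orthonormal system in $L_{2}(\overline{\R})$. With $c_{n}:=a_{n}e^{-\lambda_{n}\sigma}$ the series in question becomes $\sum_{n}c_{n}e_{n}$, and by the Rademacher--Menshov theorem it converges almost everywhere on $\overline{\R}$ once $\sum_{n}|c_{n}|^{2}\,(\log(n+1))^{2}<\infty$. This is where $(BC)$ enters: iterating $\lambda_{n+1}\ge\lambda_{n}+Ce^{-(l+\delta)\lambda_{n}}$ for one fixed $\delta>0$ yields $e^{(l+\delta)\lambda_{n}}\gtrsim n$, hence $\log(n+1)\lesssim 1+\lambda_{n}$ for all $n$ (for $\lambda=(\log n)$ this is immediate). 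Consequently
\[
|c_{n}|^{2}\,(\log(n+1))^{2}\;\lesssim\;|a_{n}|^{2}\,(1+\lambda_{n})^{2}e^{-2\sigma\lambda_{n}}\;\le\;C_{\sigma}\,|a_{n}|^{2},\qquad C_{\sigma}:=\sup_{t\ge 0}(1+t)^{2}e^{-2\sigma t}<\infty,
\]
and summing over $n$ gives the required finiteness since $(a_{n})\in\ell_{2}$. The quantitative (maximal) form of the Rademacher--Menshov inequality moreover produces the companion estimate
\[
\Big\|\sup_{N}\Big|\sum_{n\le N}a_{n}e^{-\lambda_{n}\sigma}e_{n}\Big|\Big\|_{L_{2}(\overline{\R})}\;\le\;C'_{\sigma}\,\|(a_{n})\|_{\ell_{2}},
\]
which is the ``relevant maximal inequality'' in this reflexive case.

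I do not anticipate a serious obstacle: the sole non-formal ingredient is the Rademacher--Menshov theorem, while the reduction to real points, the identification of $(e_{n})$ as an orthonormal system, and the consequence $\log(n+1)\lesssim 1+\lambda_{n}$ of $(BC)$ are routine bookkeeping. What should be emphasized instead is that this short proof is genuinely restricted to the Hilbert case together with $(BC)$: it is exactly the factor $e^{-\lambda_{n}\sigma}$ with $\sigma>0$ that makes the $(\log(n+1))^{2}$-weighted square sum finite, so the argument is silent on the line $\re=0$, and both dispensing with $(BC)$ (already for $p=2$) and passing to $p\neq 2$ will instead require the Carleson--Hunt type machinery indicated in the introduction.
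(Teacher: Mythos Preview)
Your argument is correct. The reduction to a fixed $\sigma>0$, the identification of the characters $e_{n}(\omega)=\omega(\lambda_{n})$ as an orthonormal system in $L_{2}(\overline{\R})$, the application of the Rademacher--Menshov theorem, and the use of $(BC)$ to deduce $\log(n+1)\lesssim 1+\lambda_{n}$ (via the telescoping estimate $e^{(l+\delta)\lambda_{n+1}}-e^{(l+\delta)\lambda_{n}}\ge (l+\delta)C$) are all sound.

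However, this is not the route taken in the paper. Theorem~\ref{HelsonstheoremHelson} is stated there as Helson's original result and is not given a separate proof; instead it is subsumed by the much stronger Theorem~\ref{DirichletintegerHelson}, which is obtained from the Carleson--Hunt maximal inequality transferred to an arbitrary $\lambda$-Dirichlet group (Theorem~\ref{CorointegerHelson}). That approach works for every $1<p<\infty$, requires \emph{no} condition on $\lambda$, and yields almost everywhere convergence already on the critical line $[\re=0]$, whence convergence on $[\re>0]$ follows. Your Rademacher--Menshov argument, by contrast, is the classical one (the paper itself attributes this line of reasoning to Bayart and Helson in the discussion after Synopsis~I): it is more elementary, but it genuinely needs $p=2$, needs $(BC)$ to control the logarithmic weight, and is silent on $[\re=0]$ --- exactly the limitations you point out in your final paragraph. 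So your proposal recovers Helson's theorem by Helson's own method, while the paper's machinery supersedes it.
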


One of our aims is to extend  Helson's result
to the Hardy space  $\mathcal{H}_{1}(\lambda)$ (a class of Dirichlet series much larger than  $\mathcal{H}_{2}(\lambda)$, see the definition below) under a milder assumption on the frequency $\lambda$. We say that $\lambda$ satisfies
Landau's condition $(LC)$ (introduced in \cite{Landau}) provided
\begin{equation} \label{LCHelson}
\forall~ \delta>0~ \exists ~C>0 ~\forall~ n \in \N \colon~ \lambda_{n+1}-\lambda_{n}\ge C e^{-e^{\delta \lambda_{n}}}.
\end{equation}
Observe that $(BC)$ implies $(LC)$, and that e.g. $\lambda=(\sqrt{\log n})$ satisfies $(LC)$, but fails for $(BC)$. To see an example which fails for $(LC)$, take e.g. $\lambda=(\log \log n)$.

 \subsection{Dirichlet groups}
From \cite{DefantSchoolmann2} we recall the definition and some basic facts of so-called Dirichlet groups.
 Let $G$ be a compact abelian group and $\beta\colon (\R,+) \to G$ a homomorphism of groups.  Then the pair $(G,\beta)$ is called Dirichlet group, if $\beta$ is continuous and has dense range. In this case  the dual map $\widehat{\beta}\colon \widehat{G} \hookrightarrow \R$ is injective, where we identify $\mathbb{R}=\widehat{(\mathbb{R},+)}$ (note that we do not assume $\beta$ to be injective). Consequently, the  characters $e^{-ix\pmb{\cdot}} \colon \R \to \T$, $x\in \widehat{\beta}(\widehat{G})$, are precisely those which define  a unique $h_{x} \in \widehat{G}$  such that $h_{x} \circ \beta=e^{-ix\pmb{\cdot}}$. In particular, we have that
\begin{equation*}
\widehat{G}=\{h_{x} \mid x \in \widehat{\beta}(\widehat{G}) \}.
\end{equation*}
From \cite[Section 3.1]{DefantSchoolmann2} we know that every $L_{1}(\R)$-function may be interpreted as a bounded regular Borel measure on $G$. In particular, for every $u>0$ the Poisson kernel
$$P_{u}(t):=\frac{1}{\pi}\frac{u}{u^{2}+t^{2}}\,,\,\,\, t \in \R,$$
defines a measure $p_{u}$ on $G$, which we call the Poisson measure on $G$. We have $\|p_{u}\|=\|P_{u}\|_{L_{1}(\R)}=1$ and
\begin{equation}\label{Fourier1Helson}
\text{$\widehat{p_{u}}(h_{x})=\widehat{P_{u}}(x)=e^{-u|x|}$ for all $u >0$ and
$x\in \widehat{\beta}(\widehat{G})$.}
\end{equation}
Finally, recall from \cite[Lemma 3.11]{DefantSchoolmann2} that, given  a measurable function $f:G \to \C$, then for almost all $\omega \in G$  there are measurable  functions $f_{\omega} \colon \R \to \C$
such that
\[
\text{$f_{\omega}(t)=f(\omega \beta(t))$ almost everywhere on $\R$,}
\]
and if $f\in L_{1}(G)$, then all these $f_\omega$ are locally integrable.
Moreover, as shown in \cite[Corollary 2.11]{DefantSchoolmann3}, for almost all $\omega \in G$
\begin{equation} \label{besicoHelson}
\widehat{f}(0)=\lim_{T\to \infty} \frac{1}{2T} \int_{-T}^{T} f_{\omega}(t) dt .
\end{equation}

 We will later see, that this way to 'restrict' functions on the group $G$ to $\R$, in fact establishes a sort of  bridge
between  Fourier analysis on Dirichlet groups $(G,\beta)$ and  Fourier analysis on $\R$.

\subsection{$\lambda$-Dirichlet groups}
Now, given a frequency $\lambda$, we call a Dirichlet group $(G,\beta)$ a $\lambda$-Dirichlet group whenever $\lambda \subset \widehat{\beta}(\widehat{G})$, or equivalently whenever for every
$e^{-i\lambda_{n} \pmb{\cdot}} \in \widehat{(\mathbb{R},+)}$ there is (a unique) $h_{\lambda_{n}}\in  \widehat{G}$ with $h_{\lambda_{n}}\circ \beta=e^{-i\lambda_{n} \pmb{\cdot}}$.

 Note that for every $\lambda$ there exists a $\lambda$-Dirichlet groups $(G,\beta)$ (which is not unique).
To see a very first example, take the Bohr compactification $\overline{\R}$ together with the mapping
$$\beta_{\overline{\R}} \colon \R \to \overline{\R}, ~~ t \mapsto \left[ x \mapsto e^{-itx} \right].$$
Then $\beta_{\overline{\R}}$ is continuous and has dense range (see e.g. \cite[Theorem 1.5.4, p. 24]{QQ} or \cite[Example 3.6]{DefantSchoolmann2}), and so the pair $(\overline{\R},\beta_{\overline{\R}})$ forms a $\lambda$-Dirichlet group for all $\lambda$'s. We refer to \cite{DefantSchoolmann2}  for more 'universal' examples of Dirichlet groups. Looking at the  frequency  $\lambda=(n)=(0,1,2,\ldots)$, the group $G=\T$ together with \[\beta_\T: \R \to \T, \,\,\beta_{\T}(t)=e^{-it},\]
forms  a $\lambda$-Dirichlet group, and  the so-called
Kronecker flow
\begin{equation*}
\label{oscarHelson}
\beta_{\T^{\infty}}\colon \R \to \T^{\infty}, ~~ t \mapsto \mathfrak{p}^{-it}=(2^{-it},3^{-it}, 5^{-it}, \ldots),
\end{equation*}
turns the infinite dimensional torus $\T^{\infty}$ into a  $\lambda$-Dirichlet group
for $\lambda = (\log n)$.
We note that, identifying  $\widehat{\T} = \Z$ and $\widehat{\T^\infty} = \Z^{(\N)}$ (all finite sequences of integers), in the first case $h_n(z) = z^n$ for $z \in \T, n \in \Z$,
and in the second case $h_{\sum \alpha_j \log p_j}(z) = z^\alpha$ for  $z \in \T^\infty, \alpha \in \Z^{(\N)}$.

\subsection{Hardy spaces of general Dirichlet series}
Fix some $\lambda$-Dirichlet group $(G,\beta)$ and $1\le p \le \infty$. By
 $$H_{p}^{\lambda}(G)$$
 we denote the  Hardy space of all functions
$f\in L_{p}(G)$ (recall that being a compact abelian group, $G$ allows a unique normalized Haar measure) having a Fourier transform  supported on $\{h_{\lambda_n} \colon n \in \mathbb{N}\} \subset \widehat{G}$. Being a closed subspace of $L_p(G)$, this clearly defines a Banach space.

These spaces $H_{p}^{\lambda}(G)$ naturally define $\lambda$-Dirichlet series. Let
$$\Hcal_{p}(\lambda)$$
be the class of all $\lambda$-Dirichlet series $D=\sum a_n e^{-\lambda_n s}$ for which there is some
$f \in H_p^\lambda(G)$ such that   $a_n = \widehat{f}(h_{\lambda_{n}})$ for all $n$. In this case the function $f$ is unique, and together with
the norm $\|D\|_{p}:=\|f\|_{p}$ the linear space  $\Hcal_{p}(\lambda)$ obviously  forms a Banach space. So (by definition) the so-called Bohr map
\begin{equation} \label{BohrmapHelson}
\Bcal\colon H_{p}^{\lambda}(G)\to \mathcal{H}_{p}(\lambda),~~ f \mapsto \sum \widehat{f}(h_{\lambda_{n}}) e^{-\lambda_{n}s}
\end{equation}
defines an onto isometry. A fundamental fact from   \cite[Theorem 3.24.]{DefantSchoolmann2} is that the definition of $\mathcal{H}_{p}(\lambda)$ is independent of the chosen $\lambda$-Dirichlet group $(G,\beta)$.
Now we have given two definitions of the Hilbert space $\mathcal{H}_{2}(\lambda)$, but  by Parsel's theorem both of these definitions  actually coincide.

Our two basic examples of frequencies, $\lambda = (n)$ and $\lambda = (\log n)$, lead to well-known examples:
\begin{equation} \label{hardyTHelson}
   H_{p}(\T):=H_{p}^{(n)}(\T) \,\,\, \,\text{and} \,\,\,\,   H_p(\T^\infty) := H_p^{(\log n)}(\T^\infty) \,.
\end{equation}
In particular,
$f \in H_p^{(n)}(\T)$ if and only if $f \in L_p(\T)$ and $\hat{f}(n) = 0$ for any $n \in \Z$ with  $n < 0$, and
$f \in H_p^{(\log n)}(\T^\infty)$ if and only if $f \in L_p(\T^\infty)$ and $\hat{f}(\alpha) = 0$ for any finite sequence
$\alpha = (\alpha_k)$ of integers with  $\alpha_k < 0$ for some $k$
(where as usual  $\widehat{f}(\alpha) := \widehat{f}(h_{\log \mathfrak{p}^\alpha}))$.
Consequently, if we turn to Dirichlet series, them the Banach spaces
$$\mathcal{H}_p= \mathcal{H}_p((\log n))$$
are precisely  Bayart's Hardy spaces of ordinary Dirichlet series from \cite{Bayart}
(see also \cite{Defant} and \cite{QQ}).

\subsection{Vertical limits}
Given a $\lambda$-Dirichlet series $D = \sum a_n e^{-\lambda_n s}$ and $z \in \mathbb{C}$, we say that
$$D_{z}:=\sum a_n e^{-\lambda_n z} e^{-\lambda_n s} $$  is the   translation of $D$ about $z$, and we distinguish between horizontal translations $D_u, u \in \R$,  and vertical translations
$D_{i\tau}, \tau \in \R$.

 If $(G, \beta)$ is a $\lambda$-Dirichlet group and $D \in \mathcal{H}_{p}(\lambda)$ is associated to
 $f\in H_p^\lambda(G)$, then for each $u>0$
 the  horizontal  translation $D_u$ corresponds to the convolution of $f$ with the Poisson measure $p_{u}$, i.e.
 $\Bcal(f*p_{u})=D_{u}$ (compare coefficients),
  and we refer to $f*p_{u}$ as the  translation of $f$ about $u$. In particular,  we have that $D_{u}\in \mathcal{H}_{p}(\lambda)$ for every $u>0$.

Moreover, each Dirichlet series of the form $$D^{\omega}:=\sum a_{n} h_{\lambda_{n}}(\omega)e^{-\lambda_n s}\,,\,\, \omega \in G,$$
is said to be a vertical limit of $D$. Examples are vertical translations
$D_{i\tau}$ with $\tau \in \mathbb{R}$,
and the terminology is explained by the fact that each vertical limit  may be approximated by  vertical translates. More precisely, given $D =  \sum a_n e^{-\lambda_n s}$ which converges absolutely on the right half-plane, for every  $\omega \in G$ there is a sequence $(\tau_{k})_{k} \subset \R$ such that $(D_{i\tau_{k}})$ converges to $D^{\omega}$ uniformly on $[Re>\varepsilon]$ for all $\varepsilon>0$.
 Assume conversely that for  $(\tau_{k})_{k} \subset \R$ the  vertical translations $D_{i\tau_k}$ converge
 uniformly on $[Re>\varepsilon]$ for every  $\varepsilon>0$
 to a holomorphic function $f$ on $[Re>0]$. Then there is $\omega \in G$ such that
    $f(s)= \sum_{n=1}^\infty a_n h_{\lambda_n}(\omega) e^{-\lambda_n s}$
    for all $s \in [Re>0]$\,. For all this see \cite[Proposition 4.6]{DefantSchoolmann2}.

\subsection{R\'esum\'e of our results on Helson's theorem}
With all these preliminaries we give a brief r\'esum\'e of our extensions of Helson's theorem \ref{HelsonstheoremHelson}, where we carefully have to distinguish between the cases $1<p<\infty$
and $p=1$.

\medskip

\noindent
{\bf Synopsis I} \label{SIHelson}\\
Let $(G,\beta)$ be a $\lambda$-Dirichlet group, $1 \leq p < \infty$, and $D\in \mathcal{H}_p(\lambda)$
with associated function $f \in H_p^\lambda(G)$. Then the following statements hold true:
\begin{itemize}
\item[(i)]
If $1 < p < \infty$, then  almost all vertical limits $D^\omega$ converge almost everywhere on $[Re = 0]$,
and consequently almost all of them converge on $[Re >0]$.
\item[(ii)]
 If $\lambda$ satisfies $(LC)$ and $p=1$, then  almost all vertical limits $D^\omega$ converge on $[Re >0]$.
\end{itemize}
Moreover,
there is a null set $N \subset G$ such that  for every  $\omega \notin N$ in the first  case
\[
D^\omega(it) = f_\omega (t) \,\,\, \text{for almost all $t \in \R$},
\]
and in both cases
\[
D^\omega(u+it) = (f_\omega \ast P_u) (t)  \,\,\,
\text{for every $u >0$ and almost all $t \in \R$}.
\]

\smallskip

Let us indicate carefully which of these results are already known and which are new. We first discuss the ordinary case $\lambda = (\log n)$ with $(\log n)$-Dirichlet group $(\T^{\infty}, \beta_{\T^\infty})$. Then for $p=2$ statement (i) was proved by Hedenmalm and Saksman in \cite{HedenmalmSaksman}, whereas  Bayart in   \cite[Theorem 6]{Bayart} for every $D \in \mathcal{H}_1$ proves the convergence of almost all vertical limits  $D^\omega$ on $[Re >0]$.
For Dirichlet series in $\mathcal{H}_2$ Bayart deduces his theorem from the Menchoff-Rademacher theorem on almost everywhere convergence of orthonormal series (see also \cite{DefantSchoolmann5}), and extends it then to Dirichlet series
$\mathcal{H}_1$ by  so-called hypercontractivity.
In the general case statement (ii) for $p=2$ is Helson's theorem~\ref{HelsonstheoremHelson} and under the more restrictive condition $(BC)$ instead of $(LC)$ and $p=1$.

 \subsection{Helson's theorem and its maximal inequalities}
 Our strategy is to deduce the preceding results
 \begin{itemize}
 \item
 from relevant maximal inequalities for functions in
$H_{1}^{\lambda}(G)$,
\item
to obtain as a consequence results on pointwise convergence of the Fourier series of these functions,
 \item
 and to use in a final step the  Bohr transform (\ref{BohrmapHelson}) to transfer these results to
Helson-type theorems for  Dirichlet series.
 \end{itemize}
 In the reflexive case $1 < p < \infty$  we follow closely the ideas of Duy \cite{Duy} and Hedenmalm-Saksman \cite{HedenmalmSaksman} extending the Carleson-Hunt theorem on pointwise convergence of Fourier series to functions
in $H_p^\lambda(G)$, and in the non-reflexive case $p=1$ we use among others boundedness properties of a Hardy-Littlewood maximal type  operator for integrable functions on Dirichlet groups which we invent in \cite{DefantSchoolmann3}.

In order to give a r\'esum\'e of the results we have on the first of the above steps recall that given  a measure space $(\Omega, \mu)$ the weak $L_{1}$-space
$L_{1, \infty}(\mu)$ is the linear space of all measurable functions $f\colon \Omega\to \C$ for which  there is a constant $C>0$ such that for all $\alpha>0$ we have
$
\mu \big(\left\{ \omega \in \Omega \mid |f(\omega)|>\alpha \right\} \big)\le
C/\alpha.
$
Together with $\|f\|_{1,\infty}:= \inf C$ the space $L_{1,\infty}(\mu)$ becomes a quasi Banach space (see e.g. \cite[\S 1.1.1 and \S 1.4]{Grafakos1}), where the triangle inequality holds with constant $2$.\\

\noindent
{\bf Synopsis II} \label{SIIHelson}\\
Let $(G,\beta)$ be a $\lambda$-Dirichlet group. Then the following statements hold true:
\begin{itemize}
\item[(i)] For every $1 < p < \infty$ there is a constant $C = C(p) >0$
such that for every $f \in H_p^\lambda(G)$
\begin{equation*}
 \Big\| \sup_{N} \big| \sum_{n=1}^{N} \widehat{f}(h_{\lambda_{n}}) h_{\lambda_{n}} \big|
  \Big\| _{L_p(G)} \le C \,\|f\|_{p}.
\end{equation*}
\item[(ii)]
 If $\lambda$ satisfies $(LC)$, then for every $u >0$ there is a constant $C = C(u) >0$ such that for every $f \in H_1^\lambda(G)$
\begin{equation*}
 \Big\| \sup_{N} \big| \sum_{n=1}^{N} \widehat{f}(h_{\lambda_{n}}) e^{-u \lambda_n} h_{\lambda_{n}} \big|
  \Big\| _{L_{1, \infty}(G)} \le C\,\|f\|_{1}.
\end{equation*}
\item[(iii)]
 If $\lambda$ satisfies $(BC)$,
then to every $u>0$ there is a constant $C = C(u) >0$ such that for all $1\le p \le \infty$ and $f \in H_{p}^{\lambda}(G)$
 \begin{equation*}
  \Big\|\sup_{N} \big| \sum_{n=1}^{N}\widehat{f}(h_{\lambda_{n}})e^{-\lambda_{n}u} h_{\lambda_{n}} \big|
  \Big\|_p \le C\|f\|_{p}.
 \end{equation*}
 \end{itemize}
 In particular, for all $f \in H_p^\lambda(G), 1 <p < \infty$
 \[
 f = \sum_{n=1}^{\infty} \widehat{f}(h_{\lambda_{n}})  h_{\lambda_{n}}
 \,\,\, \text{almost everywhere on $G$},
 \]
and under $(LC)$ for all $f \in H_1^\lambda(G)$ and  $u >0$
\[
f\ast p_u =\sum_{n=1}^{\infty} \widehat{f}(h_{\lambda_{n}}) e^{-u \lambda_n} h_{\lambda_{n}}
\,\,\, \text{almost everywhere on $G$.}
\]

A standard argument shows how to deduce from such maximal inequalities pointwise convergence theorem of Fourier series, e.g. using
Egoroff's theorem (see \cite[Lemma 3.6]{DefantSchoolmann3} for a more general situation). The following remark indicates
 how pointwise convergence theorems of Fourier series then transfer
to Dirichlet series (see  \cite[Lemma 1.4]{DefantSchoolmann3}).

\begin{Rema} \label{tranferHelson}
 Let $(G,\beta)$ be a Dirichlet group, and $f_n, f$ measurable functions on $G$. Then the following are equivalent:
\begin{itemize}
\item[(i)]
$\lim_{n\to \infty} f_n(\omega) = f(\omega)$ \,\,\, \text{for almost all $\omega \in G$.}
\vspace{1mm}
\item[(ii)]
$\lim_{n\to \infty} (f_n)_\omega(t)= f_\omega(t)$ \,\,\,
\text{for almost all $\omega \in G$ and for  almost all $t\in \R$.}
\end{itemize}
In particular, if $(G,\beta)$ be a $\lambda$-Dirichlet group  and $D=\sum a_n e^{-\lambda_n s}$ is associated to $f \in H_1^\lambda(G)$, then
\begin{equation*} \label{FseriesHelson}
f=\sum_{n=1}^{\infty} \widehat{f}(h_{\lambda_{n}})h_{\lambda_{n}}
\end{equation*}
almost everywhere on $G$ if and only if for almost all $\omega \in G$ the Dirichlet series
\begin{equation*} \label{DseriesHelson}
D^{\omega}=\sum a_{n} h_{\lambda_{n}}(\omega) e^{-\lambda_{n}s}
\end{equation*}
converges almost everywhere on the imaginary line $[Re=0]$, and its limit coincides with $f_\omega$
 almost everywhere on $\R$.
\end{Rema}

\subsection{Organization}
The reflexive case from Synopsis I and II we handle in Theorem~\ref{DirichletintegerHelson} and Theorem~\ref{CorointegerHelson}, and under a different point of view also in Theorem~\ref{maximalineqBCHelson}.
The Theorems~\ref{Helson(LC)Helson} and \ref{HelsonstheoHelson} are going to cover the non-reflexive parts. In the final Section~\ref{bohrstheoremsectionHelson} we extend and improve parts of the
structure theory of general Dirichlet series started in \cite{DefantSchoolmann2}. Among others we show in Theorem~\ref{equivalenceHelson} that $\mathcal{D}_{\infty}(\lambda)$, the normed space of all $\lambda$-Dirichlet series which
 converge to a bounded and then holomorphic function on the right half plane,   is complete if and only if $\mathcal{D}_{\infty}(\lambda)=\mathcal{H}_{\infty}(\lambda)$ holds isometrically if and only if
 $\lambda$ satisfies (what we call) 'Bohr's theorem'.

\section{\bf Helson's theorem versus the Carleson-Hunt theorem} \label{CarlesonsectionHelson}
In this section we provide the  proofs of the reflexive  statements  from the Synopses I and II in the introduction.

Therefore, by $CH_{p} >0$ we denote the best constant in the maximal inequality from the Carleson-Hunt theorem -- that is, given $1<p<\infty$, the best $C>0$ such that  for all $f\in L_{p}(\T)$
$$\bigg(\int_{\T} \sup_{N} \big|\sum_{|k|\le N} \widehat{f}(k)z^{k}\big|^{p} dz\bigg)^{\frac{1}{p}}\le C\|f\|_{p}.$$

\begin{Theo} \label{DirichletintegerHelson} Let $1<p<\infty$ and  $\lambda=(\lambda_n)$ an arbitrary frequency. Then for all $\lambda$-Dirichlet group $(G,\beta)$ and $D=\sum a_{n}e^{-\lambda_{n}s}\in \mathcal{H}_{p}(\lambda)$ we for almost all $\omega \in G$ have
\begin{equation}
\label{max1Helson}
\lim_{T\to \infty} \bigg(\frac{1}{2T} \int_{-T}^{T} \sup_{N} \big| \sum_{n=1}^{N} a_{n} h_{\lambda_{n}}(\omega) e^{-it\lambda_{n}} \big|^{p} dt \bigg)^{\frac{1}{p}} \le \text{CH}_{p}\|D\|_{p}.
\end{equation}
Moreover, for almost all $\omega \in G$ almost everywhere on $\R$
\begin{equation}\label{point1Helson}
D^{\omega}(it)=\sum_{n=1}^{\infty}    a_n h_{\lambda_{n}}(\omega) e^{-it\lambda_{n}}=f_{\omega}(t),
\end{equation}
and in particular
\begin{equation}\label{point2Helson}
\text{$D^{\omega}=\sum a_{n} h_{\lambda_{n}}(\omega)e^{-\lambda_{n}s}$ converges on $[Re>0]$.}
\end{equation}
\end{Theo}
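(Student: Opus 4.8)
The plan is to deduce all three displays from the single group-level maximal inequality
\[
\Big\|\sup_{N}\big|\sum_{n=1}^{N}\widehat f(h_{\lambda_n})\,h_{\lambda_n}\big|\Big\|_{L_p(G)}\le\text{CH}_p\,\|f\|_p,\qquad f\in H_p^\lambda(G),
\]
valid for every $\lambda$-Dirichlet group $(G,\beta)$ (this is Synopsis~II(i)). Granting it, \eqref{max1Helson} follows at once: the function $g:=\sup_N\big|\sum_{n=1}^N\widehat f(h_{\lambda_n})h_{\lambda_n}\big|^p$ then lies in $L_1(G)$ with $\int_G g\le\text{CH}_p^{\,p}\|f\|_p^{\,p}$, so the Besicovitch-type identity \eqref{besicoHelson} applied to $g$ gives, for almost every $\omega$,
\[
\lim_{T\to\infty}\frac{1}{2T}\int_{-T}^{T}g_\omega(t)\,dt=\widehat g(0)=\int_G g\le\text{CH}_p^{\,p}\|f\|_p^{\,p},
\]
while $g_\omega(t)=\sup_N\big|\sum_{n=1}^N a_n h_{\lambda_n}(\omega)e^{-it\lambda_n}\big|^p$ for almost every $t$ (restriction to $\R$ commutes with the finitely many partial sums and with the countable supremum, and $h_{\lambda_n}(\omega\beta(t))=h_{\lambda_n}(\omega)e^{-it\lambda_n}$); since $\|f\|_p=\|D\|_p$, taking $p$-th roots yields \eqref{max1Helson}. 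The same maximal inequality, together with density of $\lambda$-polynomials in $H_p^\lambda(G)$ (convolution with a trigonometric-polynomial approximate identity on $G$ approximates in $L_p$ and leaves the spectrum inside $\{h_{\lambda_n}\}$) and the standard sublinear-maximal-operator argument (cf.\ \cite[Lemma 3.6]{DefantSchoolmann3}), yields $\sum_{n=1}^N\widehat f(h_{\lambda_n})h_{\lambda_n}\to f$ almost everywhere on $G$; Remark~\ref{tranferHelson} converts this into \eqref{point1Helson}, and \eqref{point2Helson} is then immediate, since convergence of $D^\omega$ at a single point of $[Re=0]$ forces convergence on $[Re>0]$.

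It remains to prove the displayed maximal inequality, and here I would, following Hedenmalm--Saksman \cite{HedenmalmSaksman} and Duy \cite{Duy}, collapse everything onto a single circle and invoke Carleson--Hunt. By the density above it suffices to treat a $\lambda$-polynomial $P=\sum_{n=1}^M a_n h_{\lambda_n}$. Since $\widehat\beta\colon\widehat G\hookrightarrow\R$ is injective, $\widehat G$ is torsion-free, so the subgroup $\Gamma:=\langle h_{\lambda_1},\dots,h_{\lambda_M}\rangle$ of $\widehat G$ is free of some finite rank $d$. Fixing a $\Z$-basis $g_1,\dots,g_d$ of $\Gamma$, writing $h_{\lambda_n}=\prod_i g_i^{m^{(n)}_i}$ with $m^{(n)}=(m^{(n)}_i)\in\Z^d$, and putting $x_i:=\widehat\beta(g_i)\in\R$, an application of $\widehat\beta$ gives $\lambda_n=\langle m^{(n)},x\rangle$, where $x=(x_i)$ has $\mathbb{Q}$-linearly independent coordinates. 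The homomorphism $\Phi=(g_1,\dots,g_d)\colon G\to\T^d$ is continuous and onto (its annihilator in $\widehat{\T^d}$ is trivial), hence pushes the Haar measure of $G$ forward to that of $\T^d$; since $h_{\lambda_n}=e_{m^{(n)}}\circ\Phi$ with $e_m(z)=z^m$, we get $P=\widetilde P\circ\Phi$ and $\sup_N\big|\sum_{n=1}^N a_n h_{\lambda_n}\big|=\big(\sup_N|\sum_{n=1}^N a_n z^{m^{(n)}}|\big)\circ\Phi$ for $\widetilde P(z):=\sum_n a_n z^{m^{(n)}}$, so that $\|P\|_{L_p(G)}=\|\widetilde P\|_{L_p(\T^d)}$ and the maximal inequality for $P$ becomes the corresponding one for $\widetilde P$ on $\T^d$, the ordering of the frequencies now reading $\langle m^{(1)},x\rangle<\dots<\langle m^{(M)},x\rangle$.

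On $\T^d$ one linearises the ordering. Because $x$ has $\mathbb{Q}$-independent coordinates, the relations $\langle m^{(1)},K\rangle\ge 0$ and $\langle m^{(n+1)}-m^{(n)},K\rangle>0$ ($1\le n<M$) hold for $K=x$ with all but possibly the first strict, hence admit an integer solution $K\in\Z^d$; then $\ell_n:=\langle m^{(n)},K\rangle$ satisfies $0\le\ell_1<\dots<\ell_M$. For the monomial map $\phi(w)=(w^{K_1},\dots,w^{K_d})\colon\T\to\T^d$ one has, by Fubini and translation invariance of Haar measure, $\int_{\T^d}\int_\T|F(\zeta\phi(w))|^p\,dw\,d\zeta=\|F\|_{L_p(\T^d)}^p$ for every $F\in L_p(\T^d)$. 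Applying this with $F=\sup_N|\sum_{n=1}^N a_n z^{m^{(n)}}|$ and noting that $\sum_{n=1}^N a_n(\zeta\phi(w))^{m^{(n)}}=\sum_{n=1}^N a_n\zeta^{m^{(n)}}w^{\ell_n}$ is, for fixed $\zeta$, exactly the partial sum up to Fourier index $\ell_N$ of the circle polynomial $Q_\zeta(w):=\sum_n a_n\zeta^{m^{(n)}}w^{\ell_n}=\widetilde P(\zeta\phi(w))$ (this uses $0\le\ell_1<\dots<\ell_M$), the Carleson--Hunt maximal inequality on $\T$ gives, for each $\zeta$,
\[
\int_\T\sup_N\Big|\sum_{n=1}^N a_n\zeta^{m^{(n)}}w^{\ell_n}\Big|^p\,dw\le\text{CH}_p^{\,p}\int_\T|Q_\zeta(w)|^p\,dw;
\]
integrating in $\zeta$ over $\T^d$ and applying the averaging identity to both sides converts this into $\big\|\sup_N|\sum_{n=1}^N a_n z^{m^{(n)}}|\big\|_{L_p(\T^d)}\le\text{CH}_p\|\widetilde P\|_{L_p(\T^d)}$, the polynomial case we wanted. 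I expect the crux to be exactly this collapse from $\T^d$ to $\T$: the integer functional $K$ has to be chosen compatibly with the (arbitrary, possibly $\mathbb{Q}$-independent) ordering of the frequencies $\lambda_n$, and the rotation-averaging has to be run so that passing from the $d$-torus to the circle costs nothing in $L_p$-norm. The soft but essential input making the whole descent possible is the torsion-freeness of $\widehat G$.
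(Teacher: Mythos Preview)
Your argument is correct and follows the same global architecture as the paper: deduce \eqref{max1Helson} from the group maximal inequality via \eqref{besicoHelson}, deduce \eqref{point1Helson} from almost-everywhere convergence on $G$ via Remark~\ref{tranferHelson}, and prove the group maximal inequality by descending first to a finite-dimensional torus and then to a single circle where Carleson--Hunt applies. The point of departure is the last step.

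The paper carries out the descent $\T^d\to\T$ by Fefferman's device as in \cite{HedenmalmSaksman}: it approximates the direction vector $x$ by a rational vector $(q_1/Q,\dots,q_N/Q)$ with $\gcd(q_1,q_2)=1$, builds an explicit $N\times N$ integer matrix $A$ of determinant one whose first row is $(q_1,\dots,q_N)$, and uses the change of variables \eqref{integralHelson}--\eqref{monoHelson} to turn the half-space condition $\langle q,\alpha\rangle\le QS$ into a one-dimensional condition $\beta_1\le QS$; the transfer between Dirichlet groups is handled separately via the Besicovitch-type identity \eqref{QQQHelson} and Lemma~\ref{trickDuyHelson}. You instead choose a single integer functional $K\in\Z^d$ that strictly separates the frequencies (the feasible set is a non-empty open cone, hence contains a rational and therefore an integer point; when $\lambda_1=0$ the $\mathbb{Q}$-independence of $x$ forces $m^{(1)}=0$, which you use implicitly), and then average over rotations $\zeta\in\T^d$ of the monomial curve $w\mapsto\phi(w)$. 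This avoids both the rational approximation and the explicit unimodular matrix, and it also absorbs the transfer from $G$ to $\T^d$ into a clean push-forward of Haar measure along the surjection $\Phi=(g_1,\dots,g_d)$, in place of the paper's use of \eqref{QQQHelson} and monotone convergence. The cost is that your argument is tailored to a fixed polynomial (the functional $K$ depends on $M$), whereas the paper's matrix trick yields the directional maximal inequality $\|M_xf\|_p\le CH_p\|f\|_p$ for \emph{all} $f\in L_p(\T^N)$ simultaneously; for the present purpose this makes no difference, since density of polynomials is invoked either way.
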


\bigskip

As described above
we deduce this from a Carleson-Hunt type maximal inequality for functions in  $H_p^\lambda(G)$.

\begin{Theo} \label{CorointegerHelson} Let $\lambda$ be a frequency and  $1<p<\infty$. Then for all $\lambda$-Dirichlet groups $(G,\beta)$ and $f \in H_{p}^{\lambda}(G)$ we have
\begin{equation} \label{iiiiiHelson}
\bigg( \int_{G} \sup_{N} \big| \sum_{n=1}^{N} \widehat{f}(h_{\lambda_{n}}) h_{\lambda_{n}}(\omega) \big|^{p} d\omega \bigg)^{\frac{1}{p}} \le CH_{p}\|f\|_{p}.
\end{equation}
In particular, almost everywhere on $G$
\begin{equation} \label{sakssHelson}
f=\sum_{n=1}^{\infty} \widehat{f}(h_{\lambda_{n}}) h_{\lambda_{n}}.
\end{equation}
\end{Theo}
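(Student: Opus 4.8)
The plan is to reduce the general case to the classical Carleson--Hunt theorem on $\T$ by a transference argument, using the structural freedom in the choice of the $\lambda$-Dirichlet group $(G,\beta)$. Since $\mathcal{H}_p(\lambda)$ (and hence the validity of \eqref{iiiiiHelson} via the Bohr map) does not depend on the chosen $\lambda$-Dirichlet group by \cite[Theorem 3.24]{DefantSchoolmann2}, I am free to work with any convenient model. First I would recall that the frequency $\lambda = (\lambda_n)$ enumerates an increasing sequence of real numbers, and the key point is that the partial sums $\sum_{n=1}^N \widehat{f}(h_{\lambda_n}) h_{\lambda_n}$ are ``one-sided'' truncations along a linearly ordered index set sitting inside $\widehat{G}$. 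The idea is to embed this ordered structure into the circle so that the $h_{\lambda_n}$ correspond to characters $z \mapsto z^{k_n}$ for a suitable increasing integer sequence $(k_n)$, at which point the sup over $N$ of one-sided partial sums is dominated by the Carleson--Hunt maximal operator on $\T$.

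The key steps, in order: (1) Approximate: by density it suffices to prove \eqref{iiiiiHelson} for $f$ a trigonometric polynomial on $G$ with spectrum in $\{h_{\lambda_1},\dots,h_{\lambda_M}\}$, so only finitely many frequencies $\lambda_1 < \dots < \lambda_M$ are involved, together with a routine Fatou/monotone-convergence passage to the limit $M\to\infty$ and a standard maximal-function-plus-Egoroff argument for the almost-everywhere convergence \eqref{sakssHelson}. (2) Rescale the finitely many frequencies: since $\lambda_1,\dots,\lambda_M$ are finitely many reals, choose $\alpha>0$ (and if necessary pass to $\Q$-linear independence considerations) so that after replacing $\lambda_n$ by comparable quantities we may assume they generate, via $\beta$, characters that factor through a finite-dimensional torus; more robustly, work directly on $G$ and use that the finite set $\{h_{\lambda_n}\}_{n\le M} \subset \widehat{G}$ generates a finitely generated subgroup of $\widehat{G}$, hence is contained in the dual of a quotient group $G \to \T^d$ for some $d$ (or $\T$ if the $\lambda_n$ are rationally dependent in the simplest case). (3) Transfer to $\T$: a standard one-dimensional-to-$d$-dimensional Carleson--Hunt transference (integrating the one-dimensional inequality over rotations, as in the classical de Leeuw / Fefferman-type arguments, or as carried out by Hedenmalm--Saksman \cite{HedenmalmSaksman} and Duy \cite{Duy}) converts the maximal inequality on $\T$ into the maximal inequality over $\T^d$ for arbitrary orderings of the frequencies by a linear functional, with the \emph{same} constant $CH_p$; then pull this back to $G$ via the quotient map, which is measure-preserving and preserves $L_p$ norms. (4) Assemble: combining steps (1)--(3) gives \eqref{iiiiiHelson} with constant $CH_p$, and the a.e.\ statement \eqref{sakssHelson} follows because the maximal inequality shows the (sublinear) operator $f \mapsto \sup_N|\sum_{n\le N}\widehat f(h_{\lambda_n})h_{\lambda_n}|$ is bounded on $H_p^\lambda(G)$ while convergence holds on the dense subspace of polynomials.

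The main obstacle I expect is step (3), the transference of the Carleson--Hunt maximal inequality from $\T$ to the $d$-torus (and thence to $G$) \emph{with control of the constant and uniformly over all linear orderings of the frequency set}: one must handle the fact that the relevant partial sums on $\T^d$ are half-space truncations $\{k \in \Z^d : \langle k, \xi\rangle \le N\}$ for a direction $\xi$ determined by $(\lambda_1,\dots,\lambda_M)$, and such half-space multipliers are not literally coordinate truncations, so one needs the directional version of Carleson--Hunt. The clean way around this is the averaging/rotation trick: realize the $\T^d$-maximal operator for direction $\xi$ as an average over translates of the one-dimensional operator along the corresponding flow line, using Minkowski's integral inequality to keep the $L_p$ constant equal to $CH_p$; the subtlety is ensuring the frequencies $\lambda_n$ remain in ``general position'' after rescaling so that their images under the flow are distinct and correctly ordered, which is a density/perturbation argument that one must state carefully but which does not affect the constant.
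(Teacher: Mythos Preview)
Your overall strategy coincides with the paper's: reduce to polynomials, embed the finitely many frequencies $\lambda_1,\dots,\lambda_M$ into a finite-dimensional torus via a $\mathbb{Q}$-linearly independent basis (the paper's Lemma~\ref{trickDuyHelson} produces $b_1,\dots,b_P$ with $\{\lambda_n\}\subset\operatorname{span}_{\Z}(b_1,\dots,b_P)$, which is exactly your step (2)), prove the directional half-space maximal inequality on $\T^d$ with constant $CH_p$, and then transfer back to $(G,\beta)$ using that norms agree across $\lambda$-Dirichlet groups. The a.e.\ conclusion via density of polynomials plus the maximal bound is also handled as you describe.

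The one point where your description is genuinely off is the mechanism for the directional Carleson--Hunt on $\T^d$. You propose to ``realize the $\T^d$-maximal operator for direction $\xi$ as an average over translates of the one-dimensional operator along the corresponding flow line, using Minkowski's integral inequality.'' This does not work as stated: for a generic (irrational) direction $\xi$ the orbit $t\mapsto(e^{it\xi_1},\dots,e^{it\xi_d})$ is dense and not periodic, so restricting $f$ to it yields an almost periodic function on $\R$, not a function on $\T$, and there is no one-dimensional Carleson--Hunt inequality to average. What the paper (following Fefferman and Hedenmalm--Saksman) actually does is: first restrict by density to \emph{rational} directions $x=(q_1/Q,\dots,q_N/Q)$ with $\gcd(q_1,q_2)=1$; then build an explicit integer matrix $A\in\mathrm{SL}_N(\Z)$ whose first row is $(q_1,\dots,q_N)$, so that the measure-preserving change of variables $z\mapsto\Phi_{(A^{-1})^t}(z)$ on $\T^N$ carries the half-space $\{\alpha:\langle q,\alpha\rangle\le QS\}$ to the coordinate half-space $\{\beta:\beta_1\le QS\}$; then apply Fubini and the one-dimensional Carleson--Hunt inequality in the $z_1$-variable. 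No Minkowski inequality enters; the constant $CH_p$ is preserved exactly because the change of variables is a measure-preserving bijection and Fubini is an equality. Your ``density/perturbation'' instinct is correct, but it refers to approximating the direction by rationals, not to perturbing the frequencies.
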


Before we begin with the proofs let us apply Theorem~\ref{CorointegerHelson} to the frequency $\lambda =(\log n)$, which, as remarked above, together with the
group $(\T^\infty, \beta_{\T^\infty})$ forms a $(\log n)$-Dirichlet group.

\begin{Coro} \label{OrdiAHelson}
Let $1 <  p < \infty$ and $f \in H_p(\mathbb{T}^\infty)$. Then
\[
\lim_{N\to \infty} \sum_{\mathfrak{p}^\alpha \leq N} \widehat{f}(\alpha) z^\alpha=f(z) \,\,\,\,\, \text{almost everywhere on $\mathbb{T}^\infty$}\,,
\]
and moreover
\[
\bigg( \int_{\mathbb{T}^\infty}   \sup_N \big| \sum_{\mathfrak{p}^\alpha \leq N}
\widehat{f}(\alpha) z^\alpha
\big|^p  d z \bigg)^{1/p}
 \leq  CH_{p} \|f\|_p\,.
\]
\end{Coro}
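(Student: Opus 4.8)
The plan is to read off Corollary~\ref{OrdiAHelson} directly from Theorem~\ref{CorointegerHelson}, applied to the frequency $\lambda=(\log n)$ together with the concrete $(\log n)$-Dirichlet group $(\T^{\infty},\beta_{\T^{\infty}})$ introduced above. Since it was already recorded that $(\T^{\infty},\beta_{\T^{\infty}})$ is a $(\log n)$-Dirichlet group and that $H_p^{(\log n)}(\T^{\infty})=H_p(\T^{\infty})$, the entire proof consists in matching the abstract notation of Theorem~\ref{CorointegerHelson} (the characters $h_{\lambda_n}$, the Fourier coefficients $\widehat f(h_{\lambda_n})$, the partial sums $\sum_{n=1}^N$, the Haar measure on $G$) with the multiplicative notation appearing in the statement of the corollary.

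Concretely, I would proceed in three short steps. First, recall that under the identification $\widehat{\T^{\infty}}=\Z^{(\N)}$ one has $h_{\sum_j\alpha_j\log p_j}(z)=z^{\alpha}$; hence for $n\in\N$ with prime factorisation $n=\mathfrak{p}^{\alpha}$ (so $\alpha$ a finite sequence of non-negative integers) we get $\lambda_n=\log n=\sum_j\alpha_j\log p_j$, $h_{\lambda_n}(z)=z^{\alpha}$, and $\widehat f(h_{\lambda_n})=\widehat f(\alpha)$ by the convention $\widehat f(\alpha):=\widehat f(h_{\log\mathfrak{p}^{\alpha}})$. Second, observe that because the frequency $(\log n)$ is listed in increasing order of $n$, and writing each $n=\mathfrak{p}^{\alpha}$, the $N$-th partial sum in Theorem~\ref{CorointegerHelson} is exactly
\[
\sum_{n=1}^N \widehat f(h_{\lambda_n}) h_{\lambda_n}(z)=\sum_{\mathfrak{p}^{\alpha}\le N}\widehat f(\alpha)z^{\alpha},
\]
since the condition $n\le N$ is literally the condition $\mathfrak{p}^{\alpha}\le N$; in particular the two families of partial sums coincide term by term, so their suprema agree. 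Third, substitute these identities into the maximal inequality (\ref{iiiiiHelson}) -- using that the normalised Haar measure of $\T^{\infty}$ is the measure $dz$ written in the corollary -- to obtain the claimed bound with the same constant $CH_p$, and substitute them into (\ref{sakssHelson}) to obtain that the partial sums $\sum_{\mathfrak{p}^{\alpha}\le N}\widehat f(\alpha)z^{\alpha}$ converge to $f(z)$ for almost every $z\in\T^{\infty}$.

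I do not expect any genuine obstacle here: all the analytic substance -- the Carleson--Hunt transference to Hardy spaces on an arbitrary $\lambda$-Dirichlet group -- is contained in Theorem~\ref{CorointegerHelson}, and what remains is pure bookkeeping. The only points that require a line of justification are that summing in the order of $n\in\N$ is the same as summing over $\mathfrak{p}^{\alpha}$ in increasing order (so that the two families of partial sums really match) and the identification of the Haar measure with $dz$; both are routine and were essentially recalled in the introduction.
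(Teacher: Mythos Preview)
Your proposal is correct and is exactly the approach the paper takes: the corollary is stated immediately after Theorem~\ref{CorointegerHelson} as a direct application to the frequency $\lambda=(\log n)$ with the $(\log n)$-Dirichlet group $(\T^{\infty},\beta_{\T^{\infty}})$, and the paper gives no separate proof beyond this remark. Your careful identification of $h_{\lambda_n}$ with $z^\alpha$, of $\widehat f(h_{\lambda_n})$ with $\widehat f(\alpha)$, and of the partial sums $\sum_{n\le N}$ with $\sum_{\mathfrak{p}^\alpha\le N}$ is precisely the bookkeeping needed to read off the result.
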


We start with the proof of Theorem \ref{CorointegerHelson}, and  show  at the end of this section that this result
in fact  also proves Theorem~\ref{DirichletintegerHelson}.

Actually for a certain choice of $\lambda$-Dirichlet groups, Theorem \ref{CorointegerHelson}  is due to  Duy in his article \cite{Duy}, where convergence of Fourier series of so-called Besicovitch almost periodic functions is investigated.

In our language, fixing a frequency $\lambda$, Duy considers the  $\lambda$-Dirichlet group
$G_{D}:=\widehat{(U,d)}$, where  $U$ is the  smallest subgroup of $\R$ containing  $\lambda$ and  $d$ denotes the discrete topology. This compact abelian group together with  the mapping
$$\beta_{D}\colon \R \to G_{D}, ~~ t \mapsto \left[u \mapsto e^{-itu}\right]$$
forms a $\lambda$-Dirichlet group (see also \cite[Example 3.5]{DefantSchoolmann2}). Then by \cite[Theorem 13, p. 274]{Duy} (in our notation) the maximal operator
\begin{equation*} \label{DuymaximalopHelson}
\mathbb{M}(f)(\omega):=\sup_{N>0} \big| \sum_{n=1}^{N} \widehat{f}(h_{\lambda_{n}}) h_{\lambda_{n}}(\omega) \big|
\end{equation*}
defines a bounded operator from $H_{p}^{\lambda}(G_{D})$ to $L_{p}(G_{D})$, whenever $1<p<\infty$, and this in  fact proves  Theorem \ref{CorointegerHelson} for $(G_{D},\beta_{D})$.

Moreover, the case $p=2$ and $\lambda=(\log n)$ with Dirichlet group $(\T^{\infty},\beta_{\T^{\infty}})$ of Theorem \ref{CorointegerHelson} is proven by Hedenmalm and Saksman in \cite[Theorem 1.5]{HedenmalmSaksman}, without stating (\ref{iiiiiHelson}). Their proof and the proof of Duy are based on Carleson's maximal inequality
 on almost everywhere convergence of Fourier series of square integrable functions on $\mathbb{T}$, and
 a technique due to Fefferman from \cite{Fefferman}.

Following closely their ideas, we for the sake of completeness  provide a self-contained proof of Theorem \ref{CorointegerHelson} within our framework of Hardy spaces $H_{p}^{\lambda}(G)$, which shows that  the special  choice of the $\lambda$-Dirichlet group $(G, \beta)$ in fact is irrelevant.

 A crucial argument of \cite{Duy} is, that for every finite set $\{a_{1}, \ldots, a_{N}\}$ of positive numbers, there are $\mathbb{Q}$-linearly independent numbers $b_{1}, \ldots, b_{P}$ such that
$\{\lambda_{1}, \ldots, \lambda_{N}\} \subset \operatorname{span}_{\N_{0}} (b_{1},\ldots, b_{P}).$ We demand for less and only require integer coefficients.

\begin{Lemm} \label{trickDuyHelson} Let $a_{1}, \ldots, a_{N}$ be positive numbers. Then there are $\mathbb{Q}$-linearly independent real numbers $b_{1}, \ldots b_{P}$ such that
$\{a_{1}, \ldots, a_{N}\} \subset \operatorname{span}_{\mathbb{\Z}} (b_{1}, \ldots, b_{P}).$
\end{Lemm}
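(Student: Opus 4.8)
The plan is to pass to the finite-dimensional $\mathbb{Q}$-vector space spanned by $a_1, \dots, a_N$ inside $\R$, fix a basis there, and then rescale that basis by a single positive integer so that all the resulting coefficients become integers. Concretely, I would set $V := \operatorname{span}_{\mathbb{Q}}(a_1, \dots, a_N) \subset \R$, a $\mathbb{Q}$-vector space of some finite dimension $P \le N$, and choose a $\mathbb{Q}$-basis $c_1, \dots, c_P$ of $V$. Writing $a_i = \sum_{j=1}^{P} q_{ij} c_j$ with $q_{ij} \in \mathbb{Q}$, I would let $M \in \N$ be a common denominator of the finitely many rationals $q_{ij}$ and put $b_j := c_j / M$ for $j = 1, \dots, P$.

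It then remains to verify the two required properties. First, $b_1, \dots, b_P$ are $\mathbb{Q}$-linearly independent: a nontrivial relation $\sum_j r_j b_j = 0$ with $r_j \in \mathbb{Q}$ would, after multiplication by $M$, give a nontrivial relation among $c_1, \dots, c_P$, which is impossible. Second, $a_i = \sum_{j=1}^{P} (M q_{ij}) b_j$ and each coefficient $M q_{ij}$ lies in $\Z$ by the choice of $M$, hence $\{a_1, \dots, a_N\} \subset \operatorname{span}_{\Z}(b_1, \dots, b_P)$.

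There is essentially no obstacle here; the only point worth a word is that rescaling a $\mathbb{Q}$-basis by a nonzero scalar preserves $\mathbb{Q}$-linear independence, which is immediate. As an alternative argument one could invoke the structure theory of finitely generated abelian groups: the subgroup of $(\R,+)$ generated by $a_1, \dots, a_N$ is torsion-free and finitely generated, hence free abelian of some finite rank $P$; any $\Z$-basis $b_1, \dots, b_P$ of this subgroup is automatically $\mathbb{Q}$-linearly independent (clear denominators in a putative $\mathbb{Q}$-relation) and has $a_1, \dots, a_N$ in its $\Z$-span by construction.
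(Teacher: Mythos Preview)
Your proof is correct and rests on the same idea as the paper's: take a $\mathbb{Q}$-basis of the $\mathbb{Q}$-span of the $a_i$ and divide it by a common denominator to force integer coefficients. The only difference is organizational---the paper argues by induction on $N$, clearing denominators after adjoining each new $a_{N+1}$, whereas you do it in one stroke by picking a basis of the full span and clearing all denominators at once; your version is slightly cleaner but not genuinely different.
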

\begin{proof}
We prove the claim by induction. If $N=1$, then choose $b_{1}:=a_{1}$. Assume that for $a_{1}, \ldots, a_{N}$ there are $\mathbb{Q}$-linearly independent $b_{1}, \ldots, b_{P}$ such that $\{a_{1}, \ldots, a_{N}\} \subset \operatorname{span}_{\mathbb{\Z}} (b_{1}, \ldots, b_{P})$ and let $a_{N+1}$ arbitrary. If $(a_{N+1}, b_{1}, \ldots, b_{P})$ is $\mathbb{Q}$-linearly independent, then
choose $b_{P+1}:=a_{N+1}$. Else, there are rationals $q_{j}$ such that $a_{N+1}=\sum_{j=1}^{P} q_{j}b_{j}$ and so for every $K\in \N$
$$a_{N+1}=\sum_{j=1}^{P} (Kq_{j}) \frac{b_{j}}{K}.$$
Choose $K$ large enough such that $K q_{j}\in \Z$ for all $j$, and define $\widetilde{b_{j}}:=K^{-1}b_{j}$. Then $\{a_{1}, \ldots, a_{N}, a_{N+1}\} \subset \operatorname{span}_{\mathbb{\Z}} (\widetilde{b_{1}}, \ldots, \widetilde{b_{P}})$, which finishes the proof.
\end{proof}

  \begin{proof}[Proof of Theorem \ref{CorointegerHelson}]
We first consider polynomials from $L_{p}(\T^{\infty})$ and then show that the choice of the Dirichlet group is irrelevant. So let $f \in L_{p}(\T^{\infty})$ be a polynomial and define for $x \in \mathbb{R}^N$ the maximal function
  \[
  M_xf(z) = \sup_{S >0}
  \big| \sum_{\substack{\alpha \in \mathbb{Z}^N\\ <\alpha,x> \leq S}} \hat{f}(\alpha) z^\alpha  \big|\,,\,\,\, z \in \mathbb{T}^N\,,
  \]
  where $<\alpha,x>:=\sum \alpha_{j}x_{j}$.   We intend to show that
  \begin{align} \label{maxinequalityHelson}
  \| M_xf\|_p \leq CH_{p}\|f\|_p\,.
  \end{align}

  Note that then,  taking  $x=B$, the proof finishes.
   We will use, that given
a $N \times N$ matrix $M=(m_{i,j})$ with integer entries and such that $\det M =1$, the
transformation formula
for every integrable function
$g: \mathbb{T}^N \to \mathbb{R}$ gives
\begin{align} \label{integralHelson}
\int_{\mathbb{T}^N}  g(z)dz = \int_{\mathbb{T}^N}  g(\Phi_M(z)) dz\,,
\end{align}
 where
 \[
\Phi_M : \mathbb{T}^N \to \mathbb{T}^N\,,  (e^{it_j})_j \mapsto (e^{i \sum_k m_{jk}t_k})_j\,,
\]
and moreover  for all $\alpha \in \mathbb{Z}^N$ and $z \in \mathbb{T}^N$
\begin{align} \label{monoHelson}
\Phi_M(z)^\alpha = z^{M^{t} \alpha}\,,
\end{align}
where $M^{t}$ denotes the transposed matrix of $M$.
    By approximation we only have to prove \eqref{maxinequalityHelson} for a dense collection of $x$ in  $\mathbb{R}_{>0}^N$, and, following the argument from the proof of \cite[Theorem 1.4]{HedenmalmSaksman}, we  take
  \[
  x= \bigg(\frac{q_1}{Q}, \ldots\ldots,\frac{q_N}{Q} \bigg)\,,
  \]
  where $q_1, ., q_n, Q \in \mathbb{Z}$ and $\text{gcd}(q_1,q_2) =1$. Choose  $r_1, r_2 \in \mathbb{Z}$ such that $q_1r_2 - q_2r_1 = 1$, and define the $N \times N$ matrix
  \[
A=
  \begin{bmatrix}
    q_1 & q_2 & q_3 & . & .  &. & . & q_N \\
    r_1 & r_2 & 0 & .& . &.&. & 0 \\
     0 & 0 & 1 & 0 & .  &.&.& 0 \\
     0 & 0 & 0 & 1  & 0 &  . &.& 0 \\
     . &  . &. &.  & .  & .  & .&. &\\
     . &  . &. &.  & .  & .  & .&. &\\
     . &  . &. &.  & .  & .  & .&. &\\
      0 &  0 & 0 & 0  & 0  & . & 0& 1 \\
  \end{bmatrix}
\]
which has determinant one.
Then we deduce from \eqref{integralHelson} and \eqref{monoHelson}
(applied to $M = (A^{-1})^{t}$) that
\begin{align*}
\| M_xf\|_p^p
&
= \int_{\mathbb{T}^N}
\sup_{S >0} \big| \sum_{\substack{\alpha \in \mathbb{Z}^N\\ <q,\alpha> \leq QS}} \hat{f}(\alpha) z^{A^{-1}A\alpha}\big|^p dz
\\&
= \int_{\mathbb{T}^N}
\sup_{S >0} \big| \sum_{\beta \in \{A\alpha \colon  <q,\alpha> \leq QS\}} \hat{f}(A^{-1}\beta) z^\beta\big|^p dz\,.
\end{align*}
Now we obseve that
 for every $S >0$
\begin{align*}\label{indecesHelson}
\{ A\alpha \colon & \text{$\alpha \in \mathbb{Z}^N$  and $<q,\alpha> \leq QS $} \}
=
\{ (\beta_1,\gamma) \in \mathbb{Z} \times\mathbb{Z}^{N-1} \colon \text{$\beta_1 \leq QS $ }\}\,,
\end{align*}
hence
\begin{align*}
\| M_xf\|_p^p
&
= \int_{\mathbb{T}^{N-1}}
\bigg(
\int_{\mathbb{T}}
\sup_{S >0} \big| \sum_{\substack{\beta_1 \in \mathbb{Z}\\ \beta_1 \leq QS}}
\Big[ \sum_{\gamma \in \mathbb{Z}^{N-1} }
\hat{f}(A^{-1}(\beta_1, \gamma)) z^\gamma
 \Big] z_1^{\beta_1}\big|^p dz_1\bigg) dz\,.
\end{align*}
Finally, we deduce from the Carleson-Hunt maximal inequality in $L_p(\mathbb{T}^N)$, and another application of  \eqref{monoHelson} and  \eqref{integralHelson} that
\begin{align*}
\| M_xf\|_p^p
\le \int_{\mathbb{T}^{N-1}}
CH_{p}^{p}
\bigg(
\int_{\mathbb{T}}
\big| \sum_{\beta \in \mathbb{Z}^N}
&
\hat{f}(A^{-1}\beta) z^\beta
\big|^p dz_1\bigg) dz
\\&
=
CH_{p}^{p}
\int_{\mathbb{T}^{N}}
\big| \sum_{\alpha \in \mathbb{Z}^N}
\hat{f}(\alpha) z^\alpha
\big|^p  dz\,,
\end{align*}
which  is what we aimed for. Now let $\lambda$ be a frequency and $(G,\beta)$ be a $\lambda$-Dirichlet group. Fix $N$ and let $E_{N}:=\{\lambda_{1},\ldots \lambda_{N}\}$. Then by Lemma \ref{trickDuyHelson} there are $\mathbb{Q}$-linearly independent $B_{N}:=(b_{1},\ldots, b_{P_{N}})$ such that $E_{N}\subset \operatorname{span}_{\Z} (b_{1},\ldots, b_{P_{N}})$. Let $f=\sum_{n=1}^{N} a_{n}h_{\lambda_{n}}$ and define $g:=\sum c_{\alpha} z^{\alpha} \in L_{p}(\T^{\infty})$, where $c_{\alpha}:=a_{n}$, whenever $\lambda_{n}=\sum \alpha_{j}b_{j}$. Observe that $\T^{P_{N}}$ with mapping
$$\beta_{B_{N}}\colon \R \to \T^{P_{N}}, ~~ t \mapsto (e^{-itb_{1}}, \ldots,e^{-itb_{P_{N}}})$$
forms a Dirichlet group. Then by \cite[Proposition 3.17]{DefantSchoolmann2} we have $\|f\|_{p}=\|g\|_{p}$. Moreover, for every Dirichlet group $(H,\beta_{H})$ we for all $f\in C(H)$ have
\begin{equation}\label{QQQHelson}
\int_{G} f~dm= \lim_{T\to \infty} \frac{1}{2T} \int_{-T}^{T} (f\circ\beta_{H})(t) dt,
\end{equation}
which is straight forward checked on polynomials and follows then by density.
Since $\omega \mapsto \sup_{N\le M} \left| \sum_{n=1}^{N} \widehat{f}(h_{\lambda_{n}}) h_{\lambda_{n}}(\omega) \right|$
is continuous, we obtain using (\ref{QQQHelson}) for $(G,\beta)$ and $(\T^{P_N},\beta_{B_{N}})$ and two times the monotone convergence theorem
\begin{align*}
&\bigg( \int_{G} \sup_{N} \big| \sum_{n=1}^{N} \widehat{f}(h_{\lambda_{n}}) h_{\lambda_{n}}(\omega) \big|^{p} dz \bigg)^{\frac{1}{p}}=\lim_{M\to \infty} \bigg( \int_{G} \sup_{N\le M} \big| \sum_{n=1}^{N} \widehat{f}(h_{\lambda_{n}}) h_{\lambda_{n}}(\omega) \big|^{p} dz \bigg)^{\frac{1}{p}} \\ &= \lim_{M\to \infty} \bigg(\lim_{T\to \infty} \frac{1}{2T} \int_{-T}^{T} \sup_{N\le M} \big| \sum_{n=1}^{N} \widehat{f}(h_{\lambda_{n}})e^{-\lambda_{n}it} \big|^{p} dt \bigg)^{\frac{1}{p}} \\ & =\lim_{M\to \infty}  \bigg( \int_{\T^{\infty}} \sup_{N\le M} \big| \sum_{\alpha B \le N} \widehat{g}(\alpha) z^{\alpha} \big|^{p} dz\bigg)^{\frac{1}{p}}= \bigg( \int_{\T^{\infty}} \sup_{N} \big| \sum_{\alpha B \le N} \widehat{g}(\alpha) z^{\alpha} \big|^{p} dz \bigg)^{\frac{1}{p}}  \\ &\le CH_{p}\|g\|_{p}=CH_{p}\|f\|_{p}. \qedhere
\end{align*}
\end{proof}

\begin{proof}[Proof of Theorem \ref{DirichletintegerHelson}]
 Let $D\in \mathcal{H}_{p}(\lambda)$ and $f\in H_{p}^{\lambda}(G)$ with $\Bcal(f)=D$. By Theorem \ref{CorointegerHelson} we know that
\begin{equation*} \label{fHelson}
\omega \mapsto \sup_{N} \big| \sum_{n=1}^{N} a_{n} h_{\lambda_{n}}(\omega) \big|^p \in L_{1}(G).
\end{equation*}
Then \eqref{besicoHelson} shows  that the  maximal inequality from \eqref{iiiiiHelson} implies the   maximal inequality from \eqref{max1Helson}.
Finally,   \eqref{point1Helson}  is a consequence of \eqref{sakssHelson}  and Remark~\ref{tranferHelson}.
\end{proof}

\section{\bf Helson's theorem  under Landau's condition} \label{maximalineqsectionLCHelson}

It is almost obvious that Theorem~\ref{DirichletintegerHelson}, \eqref{max1Helson} and \eqref{point1Helson}
as well as their  equivalent formulations Theorem~\ref{CorointegerHelson}, \eqref{iiiiiHelson} and \eqref{sakssHelson} of the preceding section fail in the non-reflexive case $p=1$.
Indeed, as described in \eqref{hardyTHelson} we have that  $H_{1}(\T) = H_{1}^{(n)}(\T)$,  and it is  well-known that the Carleson-Hunt theorem fails in  $H_{1}(\T)$.
But as we are going to show now, under Landau's condition $(LC)$ on the frequency  $\lambda$ the Helson-type statement from
Theorem~\ref{DirichletintegerHelson}, \eqref{point2Helson}  can be saved.

\begin{Theo} \label{Helson(LC)Helson}
Let $(G,\beta)$ be a $\lambda$-Dirichlet group for  a frequency $\lambda$ with $(LC)$, and $D = \sum a_n e^{-\lambda_n s}\in \mathcal{H}_1(\lambda)$.
\begin{itemize}
\item[(i)]
Then for   almost all $\omega \in G$ the vertical limits $D^\omega$ converge on $[Re >0]$.
\item[(ii)]
More precisely, there is a null set $N \subset G$ such that  for every  $\omega \notin N$
\[
D^\omega(u+it) = (f_\omega \ast P_u) (t)  \,\,\,
\text{for every $u >0$ and almost all $t \in \R$}\,,
\]
where  $f \in H_1^\lambda(G)$ is  the function associated to $D$ through Bohr's transform.
\end{itemize}
\end{Theo}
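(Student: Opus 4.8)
The plan is to reduce everything to a maximal inequality for functions in $H_1^\lambda(G)$ — namely part (ii) of Synopsis~II — and then invoke the standard machinery (Egoroff's theorem plus Remark~\ref{tranferHelson}) to pass from the maximal inequality to pointwise convergence on $G$, and from there to convergence of the Dirichlet series $D^\omega$. So the first (and main) task is to prove that, under $(LC)$, for every $u>0$ there is $C=C(u)>0$ with
\[
\Big\| \sup_N \big| \sum_{n=1}^N \widehat f(h_{\lambda_n}) e^{-u\lambda_n} h_{\lambda_n} \big| \Big\|_{L_{1,\infty}(G)} \le C\,\|f\|_1
\qquad (f \in H_1^\lambda(G)).
\]
I expect this to be the heart of the matter. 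The strategy here is different from the reflexive case: rather than transferring Carleson--Hunt, one should control the partial-sum maximal operator by a Hardy--Littlewood-type maximal operator on the Dirichlet group $(G,\beta)$, whose weak-$(1,1)$ boundedness is established in \cite{DefantSchoolmann3}. The point of the damping factors $e^{-u\lambda_n}$ is that $f\ast p_u$ has much better regularity than $f$ itself, and $(LC)$ is precisely what is needed to absorb the resulting error terms: roughly, $(LC)$ guarantees that the frequencies $\lambda_n$ do not accumulate faster than a double exponential, which is what makes the relevant kernel estimates (for the difference between the partial sum operator and a smoothed version of it, after convolving with $p_u$) summable. Concretely I would fix $u>0$, write $g = f\ast p_u \in H_1^\lambda(G)$, and compare $\sum_{n\le N}\widehat g(h_{\lambda_n}) h_{\lambda_n}$ with $g$ convolved against a de la Vallée-Poussin-type kernel whose Fourier support, restricted to $\lambda$, truncates at $\lambda_N$; the error is a sum of terms each dominated pointwise by the Hardy--Littlewood maximal function of $g$ (hence of $f$, since $\|p_u\|=1$), with geometrically decaying coefficients thanks to $(LC)$, and the weak-$(1,1)$ bound for that maximal operator from \cite{DefantSchoolmann3} closes the estimate. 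One should first prove it for trigonometric polynomials $f$ — where everything is a finite sum and the passage to $\mathbb T^{P_N}$ via Lemma~\ref{trickDuyHelson} and \cite[Proposition 3.17]{DefantSchoolmann2} is available as in the proof of Theorem~\ref{CorointegerHelson} — and then extend to all of $H_1^\lambda(G)$ by density, using that the weak-$L_1$ quasi-norm is a genuine quasi-norm (triangle inequality with constant $2$) so that the usual density argument for maximal inequalities goes through.

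Granting the maximal inequality, the rest is routine. By a standard Egoroff/Banach-principle argument (as in \cite[Lemma 3.6]{DefantSchoolmann3}), the weak-$(1,1)$ maximal bound for the damped partial sums, together with the fact that on the dense subspace of trigonometric polynomials the series obviously converges (to $f\ast p_u$), forces
\[
f\ast p_u = \sum_{n=1}^\infty \widehat f(h_{\lambda_n}) e^{-u\lambda_n} h_{\lambda_n}
\qquad \text{almost everywhere on } G,
\]
for each fixed $u>0$. Applying this for a countable dense set of $u$'s, say $u \in \mathbb Q_{>0}$, and taking the union of the corresponding null sets, we obtain a single null set $N_0 \subset G$ outside of which the identity holds for all rational $u>0$ simultaneously. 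Now fix $\omega \notin N_0$. By Remark~\ref{tranferHelson} applied to the functions $f_N := \sum_{n\le N}\widehat f(h_{\lambda_n})e^{-u\lambda_n} h_{\lambda_n}$ and $f\ast p_u$ — noting that the restriction $(f\ast p_u)_\omega$ equals $f_\omega \ast P_u$ almost everywhere on $\mathbb R$, which is exactly the content of the horizontal-translation correspondence recalled in Synopsis~I and proved in \cite{DefantSchoolmann2} — we get, for almost all $\omega$,
\[
D^\omega(u+it) = \sum_{n=1}^\infty a_n h_{\lambda_n}(\omega) e^{-u\lambda_n} e^{-it\lambda_n} = (f_\omega \ast P_u)(t)
\qquad \text{for almost all } t\in\mathbb R,
\]
for every rational $u>0$; a further countable intersection of null sets over $u\in\mathbb Q_{>0}$ produces the null set $N$ of the statement. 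This proves (ii) for rational $u$; the extension to arbitrary $u>0$ then follows because if $D^\omega$ converges at some point $u_0 + it_0$ it converges on the half-plane $[\operatorname{Re} > u_0]$ (the elementary fact recalled in the introduction), so convergence at all rational horizontal abscissae forces convergence throughout $[\operatorname{Re}>0]$, giving (i); and once $D^\omega$ converges on $[\operatorname{Re}>0]$ its sum is holomorphic there and agrees with $f_\omega\ast P_u$ for rational $u$ on a dense set, hence for all $u>0$ by continuity, yielding the displayed identity in (ii) for every $u>0$.

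The step I expect to be genuinely delicate is the kernel estimate that converts the difference between the true partial-sum operator and its smoothed surrogate into a geometric series of Hardy--Littlewood maximal functions: this is where the precise form of $(LC)$ — the double-exponential gap condition \eqref{LCHelson} — must be used, and where the damping $e^{-u\lambda_n}$ is indispensable (without it the analogue is false, since for $\lambda=(n)$ it would give Carleson--Hunt in $H_1(\mathbb T)$, which fails). Everything after that — the polynomial reduction, the density argument in weak $L_1$, the Egoroff argument, and the transfer via Remark~\ref{tranferHelson} — is standard and parallels the reflexive case already carried out in Section~\ref{CarlesonsectionHelson}.
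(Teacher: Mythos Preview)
Your overall strategy is right and matches the paper: establish the weak-$(1,1)$ maximal inequality for the damped partial sums by domination with the Hardy--Littlewood type operator $\overline{M}$ on $G$ from \cite{DefantSchoolmann3}, then pass to pointwise convergence via \cite[Lemma~3.6]{DefantSchoolmann3} and transfer to Dirichlet series via Remark~\ref{tranferHelson}. But the concrete mechanism you sketch for the maximal inequality is not the paper's, and the description is vague at exactly the point where $(LC)$ has to do the work. You propose to compare the partial sum with a de la Vall\'ee-Poussin-type convolution and assert that the error decomposes with ``geometrically decaying coefficients thanks to $(LC)$''; it is not clear what kernel you have in mind on a general $\lambda$-Dirichlet group, nor how the double-exponential gap condition~\eqref{LCHelson} produces geometric decay in such a decomposition. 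The paper instead uses two explicit ingredients: (a)~Abel summation (Lemma~\ref{abeleHelson}) to reduce $S_{\max}^{2u}$ to $\sup_N e^{-u\lambda_N}\big|\sum_{n\le N}\widehat f(h_{\lambda_n})h_{\lambda_n}\big|$, and (b)~a Riesz-means estimate (Lemma~\ref{jojHelson}, built on \cite[Lemma~3.5]{Schoolmann} and \cite[Proposition~3.2]{DefantSchoolmann3}) giving a pointwise bound of the undamped partial sum by $Ck_N^{-1}\big(\tfrac{\lambda_{N+1}}{\lambda_{N+1}-\lambda_N}\big)^{k_N}\overline{M}(f)$ for \emph{any} choice $k_N\in(0,1]$. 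The condition $(LC)$ enters by choosing $k_N=e^{-u\lambda_N}$, which makes $e^{-u\lambda_N}$ absorb the blow-up factor (see~\eqref{(A)Helson}). There is no reduction to trigonometric polynomials or to $\T^{P_N}$ here; Lemma~\ref{trickDuyHelson} plays no role in this section.

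Your route from the maximal inequality to the theorem---fix rational $u$, take a countable union of null sets, then propagate convergence to all of $[\operatorname{Re}>0]$ using $\sigma_c(D^\omega)\le u$ and continuity---is valid but slightly roundabout compared with the paper. The paper strengthens the pointwise bound to $S_{\max}^{u+v}(f)\le C(u,\lambda)\,\overline{M}(f)$ uniformly in $v\ge 0$ (since $S_{\max}^{u+v}(f)=S_{\max}^{u}(f\ast p_v)$ and $\overline{M}(f\ast p_v)\le \overline{M}(f)$), and from this gets the single statement~\eqref{hansimglueckHelson}: for each $u>0$ there is one null set outside of which the partial sums converge to $f\ast p_\alpha$ \emph{uniformly in} $\alpha\ge u$. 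Collecting the null sets for $u=1/n$ then directly gives the identity $D^\omega(u+it)=(f_\omega\ast P_u)(t)$ for every $u>0$ and a.e.\ $t$, with no separate continuity argument needed.
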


As in the preceding section our general setting combined with some of our preliminaries show that this result on general Dirichlet series in fact is equivalent to a
result on pointwise convergence of Fourier series in Hardy spaces on $\lambda$-Dirichlet groups.

\begin{Theo}\label{HelsonstheoHelson}
Let $(G,\beta)$ be a $\lambda$-Dirichlet group for  a frequency $\lambda$ with $(LC)$.
\begin{itemize}
\item[(i)]
 Then for every $u>0$ the sublinear operator
\begin{equation*} \label{operatorHHelson}
S_{max}^{u}(f)(\omega):=\sup_{N} \big|\sum_{n=1}^{N} \widehat{f}(h_{\lambda_{n}})e^{-u\lambda_{n}} h_{\lambda_{n}}(\omega)\big|
\end{equation*}
is bounded from $H_{1}^{\lambda}(G)$ to $L_{1,\infty}(G)$.
\item[(ii)]
Moreover, if $f \in H_{1}^\lambda(G)$, then there is a null set $N\subset G$ such that for every $\omega \notin N$ and every $u>0$ we have
\begin{equation*} \label{guertelHelson}
(f \ast p_{u})(\omega) = \sum_{n=1}^{\infty} \widehat{f}(h_{\lambda_{n}})e^{-u\lambda_{n}} h_{\lambda_{n}}(\omega).
\end{equation*}
\end{itemize}
\end{Theo}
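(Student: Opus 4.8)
The plan is to prove Theorem~\ref{HelsonstheoHelson} and then read off Theorem~\ref{Helson(LC)Helson}: once (i) and (ii) of Theorem~\ref{HelsonstheoHelson} are available, the Bohr transform~\eqref{BohrmapHelson}, Remark~\ref{tranferHelson}, and the identities $(f\ast p_u)_\omega=f_\omega\ast P_u$ and $\widehat{f\ast p_u}(h_{\lambda_n})=\widehat f(h_{\lambda_n})e^{-u\lambda_n}$ (the latter from~\eqref{Fourier1Helson}) transport the a.e.\ equality for the Fourier series of $f\ast p_u$ into the asserted $D^\omega(u+it)=(f_\omega\ast P_u)(t)$ for a.e.\ $t$; and since the convergence of $\sum_n a_n h_{\lambda_n}(\omega)e^{-u\lambda_n}$ for every $u>0$ and a.e.\ $\omega$ forces $D^\omega$ to converge on $[Re>0]$ (a general Dirichlet series converging at one point converges on the half plane to its right), this also gives Theorem~\ref{Helson(LC)Helson}(i).

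For part~(i) of Theorem~\ref{HelsonstheoHelson} fix $u>0$. Since the polynomials $\sum_{n\le N}a_n h_{\lambda_n}$ are dense in $H_1^\lambda(G)$, $\|\cdot\|_{1,\infty}$ is a complete quasi-norm, and $\mathcal H_1(\lambda)$ does not depend on the chosen $\lambda$-Dirichlet group (\cite[Theorem~3.24]{DefantSchoolmann2}), it suffices to prove $\|S_{max}^u f\|_{L_{1,\infty}(G)}\le C(u)\|f\|_1$ uniformly over such polynomials $f$, with $(G,\beta)$ chosen conveniently. Two ingredients are then combined. First, for a fixed order $m\ge1$ the Riesz--Ces\`aro maximal operator $g\mapsto\sup_{x>0}\big|\sum_{\lambda_n<x}(1-\lambda_n/x)^m\,\widehat g(h_{\lambda_n})h_{\lambda_n}\big|$ is pointwise dominated by a constant times the Hardy--Littlewood maximal operator $M$ for integrable functions on $(G,\beta)$ from~\cite{DefantSchoolmann3} (on the fibres the Riesz--Ces\`aro kernel is dominated by a fixed radially decreasing $L_1$-kernel), and hence is of weak type $(1,1)$. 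Second --- and this is where Landau's condition $(LC)$ enters --- one establishes the pointwise comparison
\[
S_{max}^u f(\omega)\ \le\ C(u)\,\sup_{x>0}\Big|\sum_{\lambda_n<x}(1-\lambda_n/x)^m\,\widehat{(f\ast p_{u/2})}(h_{\lambda_n})h_{\lambda_n}(\omega)\Big|\ +\ C(u)\,(Mf)(\omega),
\]
which expresses the ordinary partial sums of $D$ at the \emph{interior} point $s=u$ through the Riesz--Ces\`aro means of the translate $f\ast p_{u/2}$: passing from order-$m$ summability of a $\lambda$-Dirichlet series to convergence of its ordinary partial sums inside the half plane of summability is a consistency/Tauberian phenomenon, and the lower spacing $\lambda_{n+1}-\lambda_n\ge Ce^{-e^{\delta\lambda_n}}$ is precisely the hypothesis that makes the required kernel estimates work (the extra damping $e^{-u\lambda_n}$, i.e.\ the passage to the interior of the half plane, being essential). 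With $\|f\ast p_{u/2}\|_1\le\|f\|_1$ and the weak $(1,1)$ bound for $M$ this yields~(i).

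Part~(ii) of Theorem~\ref{HelsonstheoHelson} follows from~(i) by the standard Egoroff/density scheme (cf.\ \cite[Lemma~3.6]{DefantSchoolmann3}): for each fixed $u>0$ the maximal inequality together with the trivial a.e.\ convergence $S_N^u f\to f\ast p_u$ for polynomials $f$ produces a null set $N_u$ off which $\sum_{n\le N}\widehat f(h_{\lambda_n})e^{-u\lambda_n}h_{\lambda_n}(\omega)\to(f\ast p_u)(\omega)$. To make the exceptional set independent of $u$, note that if this series converges for some rational $u_0>0$ then $D^\omega$ (with $D=\Bcal(f)$) converges at $s=u_0$, hence on $[Re>u_0]$, so that the series with damping $e^{-u\lambda_n}$ converges for every $u>u_0$; a routine real-analyticity argument in $u$ (using that $u\mapsto f\ast p_u$ is a real-analytic $L_1(G)$-valued map and that $u\mapsto D^\omega(u)$ is real-analytic on $(0,\infty)$) identifies the limit as $(f\ast p_u)(\omega)$, and taking $N:=\bigcup_{u_0\in\mathbb Q_{>0}}N_{u_0}$ settles all real $u>0$ simultaneously.

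The main obstacle is the pointwise comparison in part~(i). The sharp partial-sum operator $f\mapsto\sup_N\big|\sum_{n\le N}\widehat f(h_{\lambda_n})h_{\lambda_n}\big|$ is genuinely unbounded on $H_1^\lambda(G)$ --- the Carleson--Hunt theorem already fails on $H_1(\mathbb T)=H_1^{(n)}(\mathbb T)$ --- so, in contrast to the reflexive case, there is no reduction to a torus, and the kernels involved are not uniformly dominated by one fixed finite measure on $G$ (that is what $(BC)$, but not $(LC)$, provides, which is why under $(BC)$ one even obtains $L_p\to L_p$ for all $p$ in Synopsis~II(iii)). Everything therefore rests on the quantitative fact that, once one passes to the interior of the half plane via the damping $e^{-u\lambda_n}$, Landau's lower spacing $(LC)$ suffices to control the ordinary partial sums by the Riesz--Ces\`aro means; carrying this out with constants uniform in $N$ (and choosing the order $m$, the scales $x$, and the mollification in the gaps $(\lambda_N,\lambda_{N+1})$ so that the error terms fit together) is the technical heart of the argument, and the place where $(LC)$ cannot be weakened at this level of generality.
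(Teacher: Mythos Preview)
Your overall architecture --- reduce to a maximal inequality for Riesz means via the Hardy--Littlewood type operator $\overline{M}$ on $G$, then compare $S_{max}^u$ pointwise to this --- is exactly the paper's, but the displayed comparison you propose, with Riesz means of a \emph{fixed} integer order $m\ge 1$, does not go through under $(LC)$, and this is the decisive point. Passing from order-$m$ Riesz means back to ordinary partial sums costs a factor $\big(\lambda_{N+1}/(\lambda_{N+1}-\lambda_N)\big)^m$ (this is \cite[Lemma~3.5]{Schoolmann}); under $(LC)$ that factor can be as large as $\exp(m\,e^{\delta\lambda_N})$, and no damping of the form $e^{-u\lambda_N}$ absorbs it. (This is exactly why under the stronger $(BC)$ one obtains the $L_p\!\to\!L_p$ bounds in Synopsis~II(iii), while under $(LC)$ one must argue differently.) The paper's missing idea is to let the Riesz order \emph{depend on $N$}: with $k_N:=e^{-u\lambda_N}\in(0,1]$ the gap factor becomes $\big(\lambda_{N+1}/(\lambda_{N+1}-\lambda_N)\big)^{k_N}$, which \emph{is} bounded under $(LC)$; combined with $\sup_x|R_x^{\lambda,k_N}(f)|\le C\,k_N^{-1}\overline{M}(f)$ from \cite{DefantSchoolmann3} and one Abel summation (Lemma~\ref{abeleHelson}) this yields the pointwise bound $S_{max}^u(f)\le C(u,\lambda)\,\overline{M}(f)$, hence the weak $(1,1)$ inequality. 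Your closing paragraph hints at ``choosing the order $m$'', but the actual mechanism is this $N$-dependent order, not ``mollification in the gaps''; without it the argument does not close.

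For part~(ii) your scheme via countably many $u_0$ plus a ``real-analyticity'' identification is shakier than it looks: $(f\ast p_u)(\omega)$ is only an a.e.\ defined quantity for each fixed $u$, so the map $u\mapsto(f\ast p_u)(\omega)$ is not a priori a function on which one can run such an argument. The paper avoids this entirely: the pointwise bound above in fact gives $S_{max}^{u+v}(f)\le C(u,\lambda)\,\overline{M}(f)$ for \emph{all} $v\ge 0$, so $\big\|\sup_{\alpha\ge u}S_{max}^\alpha(f)\big\|_{1,\infty}\le C(u,\lambda)\|f\|_1$; the standard density argument (\cite[Lemma~3.6]{DefantSchoolmann3}) then produces, for each $u>0$, a single null set $N_u$ off which the convergence to $(f\ast p_\alpha)(\omega)$ holds \emph{uniformly in $\alpha\ge u$}, and $N=\bigcup_n N_{1/n}$ finishes.
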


Note that $S_{max}^{u}$ by Theorem~\ref{CorointegerHelson}  without any restriction on $\lambda$ is bounded from $H_{p}^{\lambda}(G)$ to $L_{p}(G)$, whenever  $1<p\le \infty$
(apply Theorem~\ref{CorointegerHelson} for $f \in H_{p}^{\lambda}(G)$ to $f \ast p_u$).

The proof of Theorem~\ref{HelsonstheoHelson}  needs two lemmas, the first one of which in fact is crucial.

\begin{Lemm} \label{jojHelson} Let $\lambda$ be an arbitrary frequency. Then for any sequence $(k_{N})\subset ]0,1]$ the sublinear operator
$$T_{max}(f)(\omega):=\sup_{N} \Big(\big| \sum_{n=1}^{N} \widehat{f}(h_{\lambda_{n}}) h_{\lambda_{n}}(\omega)\big| k_{N}  \Big(\frac{\lambda_{N+1}-\lambda_{N}}{\lambda_{N+1}}\Big)^{k_{N}}\Big)$$
is bounded from $H_{1}^{\lambda}(G)$ to $L_{1,\infty}(G)$ and from $H_{p}^{\lambda}(G)$ to $L_{p}(G)$, where $1<p\le \infty$.
\end{Lemm}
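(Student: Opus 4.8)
The plan is to dominate the sublinear operator $T_{max}$ pointwise almost everywhere on $G$ by a constant multiple of the Hardy--Littlewood maximal operator $M$ on $G$ introduced in \cite{DefantSchoolmann3} — recall that this $M$ maps $L_1(G)$ into $L_{1,\infty}(G)$ and is bounded on $L_p(G)$ for every $1<p\le\infty$ — after which both asserted estimates are immediate. To realise the partial sums $S_N f:=\sum_{n=1}^N\widehat f(h_{\lambda_n})h_{\lambda_n}$ as convolutions, fix $N$, put $\delta_N:=\lambda_{N+1}-\lambda_N>0$ and $c_N:=\tfrac12(\lambda_N+\lambda_{N+1})$, and let $m_N$ be the trapezoidal multiplier which equals $1$ on $[-\lambda_N,\lambda_N]$, equals $0$ outside $[-\lambda_{N+1},\lambda_{N+1}]$, and is affine in between; then $m_N=\delta_N^{-1}(\lambda_{N+1}T_{\lambda_{N+1}}-\lambda_N T_{\lambda_N})$ with $T_R(\xi)=(1-|\xi|/R)_+$, so its inverse Fourier transform $K_N$, a difference of two Fej\'er kernels, lies in $L_1(\R)$ and equals, up to a universal constant, $\sin(c_N t)\sin(\delta_N t/2)/(\delta_N t^2)$. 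Since $\widehat{K_N}(\lambda_n)=m_N(\lambda_n)$ is $1$ for $n\le N$ and $0$ for $n\ge N+1$, interpreting $K_N\in L_1(\R)$ as a measure on $G$ as in \cite[Section~3.1]{DefantSchoolmann2} (cf.\ \eqref{Fourier1Helson}) and comparing Fourier coefficients — two $L_1(G)$-functions with equal Fourier transform agree a.e.\ — gives $f\ast K_N=S_N f$ for every $f\in H_p^\lambda(G)$.

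The decisive point is the size of $K_N$. Estimating each sine factor by $1$ or by its linearisation, one gets the even, radially decreasing majorant
\[
|K_N(t)|\le\Psi_N(t):=C_0\,\min\bigl\{\,c_N,\ |t|^{-1},\ \delta_N^{-1}t^{-2}\,\bigr\},
\]
and, splitting the integral at $|t|=1/c_N$ and $|t|=1/\delta_N$ and using $\delta_N\le 2c_N$ (which holds since $\lambda_N\ge0$), a direct computation yields the \emph{logarithmic} bound $\|\Psi_N\|_{L_1(\R)}\le C(1+\log(\lambda_{N+1}/\delta_N))$. Now the classical fact that a nonnegative even decreasing $L_1$-function is, via the layer-cake decomposition into normalised indicators of symmetric intervals, $\le \|\cdot\|_1$ times the Hardy--Littlewood maximal average — transported from $\R$ to $G$ through the orbit identities $f(\omega\beta(\cdot))=f_\omega(\cdot)$ and $(f\ast K_N)_\omega=f_\omega\ast K_N$, valid for a.e.\ $\omega$ by the basic theory of \cite{DefantSchoolmann2,DefantSchoolmann3} — gives, for a.e.\ $\omega\in G$,
\[
|S_N f(\omega)|=|(f\ast K_N)(\omega)|\le(|f|\ast\Psi_N)(\omega)\le\|\Psi_N\|_{L_1(\R)}\,M(|f|)(\omega),
\]
and, the exceptional set being a countable union of null sets, this holds for all $N$ at once.

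It remains to see that the weights absorb the logarithm. Setting $x_N:=\delta_N/\lambda_{N+1}\in\,]0,1]$ and using $c_N\le\lambda_{N+1}$, the previous display gives
\[
k_N\Bigl(\tfrac{\lambda_{N+1}-\lambda_N}{\lambda_{N+1}}\Bigr)^{k_N}\|\Psi_N\|_{L_1(\R)}\le C\,k_N\,x_N^{\,k_N}\bigl(1+\log(1/x_N)\bigr),
\]
and, writing $y:=x_N^{\,k_N}\in\,]0,1]$, we have $k_N x_N^{\,k_N}\log(1/x_N)=y\log(1/y)\le 1/e$ and $k_N x_N^{\,k_N}\le 1$; hence the right-hand side is $\le C(1+1/e)$, uniformly in $N$ and in the choice of $(k_N)\subset\,]0,1]$. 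Combining this with the previous display yields $T_{max}f(\omega)=\sup_N\bigl(|S_N f(\omega)|\,k_N(\tfrac{\lambda_{N+1}-\lambda_N}{\lambda_{N+1}})^{k_N}\bigr)\le C'\,M(|f|)(\omega)$ for a.e.\ $\omega\in G$, and the boundedness of $M$ from $L_1(G)$ into $L_{1,\infty}(G)$ and on $L_p(G)$, $1<p\le\infty$, finishes the proof.

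The main obstacle is the $L_1$-bound $\|\Psi_N\|_1\lesssim 1+\log(\lambda_{N+1}/\delta_N)$: a crude estimate on the trapezoidal kernel only gives the useless linear bound $\lesssim\lambda_{N+1}/\delta_N$, so one genuinely has to exploit the cancellation that turns $K_N$ into a difference of Fej\'er kernels with the factorised profile $\sin(c_N t)\sin(\delta_N t/2)/t^2$ — and it is exactly this logarithm that the weights $k_N(\delta_N/\lambda_{N+1})^{k_N}$ are designed to swallow. A secondary technical point is the passage of the decreasing-majorant inequality from $\R$ to $G$, i.e.\ identifying convolution by an $L_1(\R)$-kernel on $G$ with convolution on the $\beta$-orbits; this should be quoted from \cite{DefantSchoolmann2,DefantSchoolmann3}.
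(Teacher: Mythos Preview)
Your proof is correct and reaches the same endpoint as the paper --- a pointwise domination $T_{max}f\le C\,\overline{M}f$ by the Hardy--Littlewood type maximal operator on $G$ from \cite{DefantSchoolmann3} --- but the route differs. The paper works through \emph{Riesz means} of variable order $k_N$: it quotes \cite[Lemma~3.5]{Schoolmann} for the bound $|S_Nf|\le 3\bigl(\lambda_{N+1}/(\lambda_{N+1}-\lambda_N)\bigr)^{k_N}\sup_{x}|R_x^{\lambda,k_N}(f)|$ and \cite[Proposition~3.2]{DefantSchoolmann3} for $\sup_x|R_x^{\lambda,k_N}(f)|\le Ck_N^{-1}\overline{M}f$, so that the damping factor $k_N\bigl((\lambda_{N+1}-\lambda_N)/\lambda_{N+1}\bigr)^{k_N}$ cancels exactly. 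You instead realise $S_Nf$ as convolution with the de la Vall\'ee Poussin / trapezoidal kernel (a difference of two Fej\'er kernels, i.e.\ Riesz means of order~$1$), obtain the sharp radially decreasing majorant with $\|\Psi_N\|_1\lesssim 1+\log(\lambda_{N+1}/\delta_N)$, and then verify that $k_Nx_N^{k_N}(1+\log(1/x_N))\le 1+e^{-1}$ absorbs the logarithm. The two estimates are dual: optimising the paper's bound $k^{-1}x^{-k}$ over $k\in(0,1]$ gives $\asymp 1+\log(1/x)$, which is precisely your $\|\Psi_N\|_1$.

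What each buys: the paper's argument is modular (two citations, three lines) and keeps the order $k_N$ as a free parameter, which is convenient since the very same Riesz-mean machinery is reused elsewhere (e.g.\ in the $(LC)$ and $(BC)$ sections). Your argument is more self-contained --- the trapezoidal kernel, its factorisation $\sin(c_Nt)\sin(\delta_Nt/2)/(\delta_N t^2)$, and the logarithmic $L_1$ bound are all written out --- and makes transparent that the phenomenon is the classical one behind the $O(\log N)$ Lebesgue constant for partial sums versus the $O(1)$ bound for Fej\'er means. The transport of the decreasing-majorant inequality from $\R$ to $G$ via $(f\ast K_N)_\omega=f_\omega\ast K_N$ is indeed available from \cite{DefantSchoolmann2,DefantSchoolmann3}; the paper's $\overline{M}$ is exactly the centered Hardy--Littlewood maximal function along orbits, so your pointwise bound matches the paper's \eqref{bjHelson}.
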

The proof reduces to boundedness properties of the following Hardy-Littlewood maximal type operator $\overline{M}$ introduced in \cite[Section 2.3]{DefantSchoolmann3}:
For  $f\in L_{1}(G)$ and almost all $\omega \in G$ we define
\begin{equation*}
\overline{M}(f)(\omega):=\sup_{I\subset \R} \frac{1}{|I|} \int_{I} |f_{\omega}(t)| dt,
\end{equation*}
where the supremum is taken over all intervals $I\subset \R$.
Then, as shown in \cite[Theorem 2.10]{DefantSchoolmann3}, $\overline{M}$ is  a sublinear bounded operator from $L_{1}(G)$ to $L_{1,\infty}(G)$, and from $L_{p}(G)$ to $L_{p}(G)$, whenever $1<p\le \infty$.
\begin{proof}[Proof of Lemma \ref{jojHelson}]
We recall from \cite[Section 1.3]{DefantSchoolmann3} the notion of Riesz means of some function $f\in H_{1}^{\lambda}(G)$. For $k>0$ and $x>0$ the polynomial
$$R_{x}^{\lambda,k}(f):=\sum_{\lambda_{n}<x} \widehat{f}(h_{\lambda_{n}})\bigg(1-\frac{\lambda_{n}}{x}\bigg)^{k} h_{\lambda_{n}}$$
is called the first $(\lambda,k)$-Riesz mean of $f$. Then, choosing $(k_{N})\subset ]0,1]$, from \cite[Lemma 3.5]{Schoolmann} we know that
\[
\big| \sum_{n=1}^{N} \widehat{f}(h_{\lambda_{n}}) h_{\lambda_{n}}(\omega)\big|\le 3 \bigg(\frac{\lambda_{N+1}}{\lambda_{N+1}-\lambda_{N}} \bigg)^{k_{N}} \sup_{0<x<\lambda_{N+1}} |R^{\lambda,k_{N}}_{x}(f)(\omega)|\,,
\]
 and additionally from \cite[Proposition 3.2]{DefantSchoolmann3} that
\begin{equation*}
\sup_{x>0}|R^{\lambda,k_{N}}_{x}(f)(\omega)|\le CK_{N}^{-1}\overline{M}(f)(\omega),
\end{equation*}
where $C$ is an absolute constant. So together
\begin{equation} \label{bjHelson}
|T_{max}(f)(\omega)|\le 3C \overline{M}(f)(\omega),
\end{equation}
and, since $\overline{M}$ has the stated boundedness properties, the claim follows.
\end{proof}

The second lemma is a standard  consequence of Abel summation.

\begin{Lemm} \label{abeleHelson}
For every $u>0$  there is a constant $C=C(u)$ such that for every choice of complex numbers $a_1, \ldots, a_N$
for all frequencies $\lambda=(\lambda_{n})$ and  $\varepsilon>0$
\[
\big|  \sum_{n=1}^{N} a_n  e^{-(u+\varepsilon) \lambda_n} \big|
\leq
 C(u)\sup_{n\le N}\big|e^{-\varepsilon \lambda_n}\sum_{n=1}^{N} a_n \big|\,.
\]
\end{Lemm}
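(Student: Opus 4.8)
The plan is to prove this by summation by parts (Abel summation), treating $a_n e^{-(u+\varepsilon)\lambda_n}$ as the product of $a_n e^{-\varepsilon\lambda_n}$ with the weight $e^{-u\lambda_n}$, and then extracting the partial sums $A_m := \sum_{n=1}^m a_n e^{-\varepsilon\lambda_n}$. First I would write
\[
\sum_{n=1}^{N} a_n e^{-(u+\varepsilon)\lambda_n}
= \sum_{n=1}^{N} \big(A_n - A_{n-1}\big) e^{-u\lambda_n}
= A_N e^{-u\lambda_N} + \sum_{n=1}^{N-1} A_n \big( e^{-u\lambda_n} - e^{-u\lambda_{n+1}} \big),
\]
with the convention $A_0 = 0$. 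Taking absolute values and bounding each $|A_n|$ by $M := \sup_{n\le N}|A_n| = \sup_{n\le N}\big|\sum_{k=1}^n a_k e^{-\varepsilon\lambda_k}\big|$, and using that the $\lambda_n$ are strictly increasing so that $e^{-u\lambda_n} - e^{-u\lambda_{n+1}} > 0$, the sum telescopes:
\[
\Big| \sum_{n=1}^{N} a_n e^{-(u+\varepsilon)\lambda_n} \Big|
\le M\,e^{-u\lambda_N} + M \sum_{n=1}^{N-1} \big( e^{-u\lambda_n} - e^{-u\lambda_{n+1}} \big)
= M\,e^{-u\lambda_N} + M\big( e^{-u\lambda_1} - e^{-u\lambda_N} \big)
= M\,e^{-u\lambda_1}.
\]

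Since $\lambda_1 \ge 0$ (frequencies are non-negative), we have $e^{-u\lambda_1} \le 1$, so in fact one may take $C(u) = 1$; this is cleaner than keeping a $u$-dependent constant, but stating it with a constant $C(u)$ is harmless and matches the way the lemma will be applied. I would note that the argument is completely uniform in $\varepsilon > 0$ and in the frequency $\lambda$, as the statement demands: the only properties used are that $(\lambda_n)$ is increasing and non-negative. No obstacle is expected here — the estimate is the textbook Abel-summation trick, and the one point to be careful about is the sign of the increments $e^{-u\lambda_n} - e^{-u\lambda_{n+1}}$, which is what makes the telescoping bound work without picking up the number of terms $N$.
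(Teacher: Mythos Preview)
Your Abel summation is clean, but you have misread the right–hand side of the lemma. The expression
\[
\sup_{n\le N}\Big|e^{-\varepsilon\lambda_n}\sum_{n=1}^{N} a_n\Big|
\]
in the statement suffers from an index collision; what is meant (and what the paper's own proof and both applications actually use) is
\[
\sup_{m\le N}\Big|e^{-\varepsilon\lambda_m}\,S_m\Big|,\qquad S_m:=\sum_{k=1}^{m} a_k,
\]
i.e.\ the \emph{plain} partial sums multiplied by a single exponential factor. Your quantity $A_m=\sum_{k=1}^{m} a_k e^{-\varepsilon\lambda_k}$ is the partial sum of the weighted sequence and is a different object: you have pulled $e^{-\varepsilon\lambda_n}$ inside the sum, whereas in the statement it sits outside. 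Consequently you have proved the (true and easy) bound
$\big|\sum_{n\le N} a_n e^{-(u+\varepsilon)\lambda_n}\big|\le \sup_{m\le N}|A_m|$,
which is not the inequality needed downstream: in \eqref{mittHelson} and after \eqref{rasenHelson} the lemma is applied precisely to reach $e^{-\varepsilon\lambda_m}S_m$ with $S_m$ unweighted, so that one can plug into the operator $T_{\max}$ of Lemma~\ref{jojHelson}, which is built from the unweighted partial sums.

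The right decomposition is therefore $a_n e^{-(u+\varepsilon)\lambda_n}=a_n\cdot e^{-(u+\varepsilon)\lambda_n}$: Abel-sum with partial sums $S_n$ and weights $w_n=e^{-(u+\varepsilon)\lambda_n}$, then estimate $|S_n|\le e^{\varepsilon\lambda_n}\sup_m|e^{-\varepsilon\lambda_m}S_m|$. This is exactly what the paper does. One word of caution: with this correct splitting the telescoping is no longer perfect, because after extracting $e^{\varepsilon\lambda_n}$ one is left with
$\sum_{n<N}\big(e^{-u\lambda_n}-e^{-u\lambda_{n+1}}e^{-\varepsilon(\lambda_{n+1}-\lambda_n)}\big)$,
which does not collapse to $e^{-u\lambda_1}$. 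A workable bound is obtained by writing this as
$(e^{-u\lambda_1}-e^{-u\lambda_N})+\sum_{n<N}e^{-u\lambda_{n+1}}\big(1-e^{-\varepsilon(\lambda_{n+1}-\lambda_n)}\big)$
and estimating $1-e^{-\varepsilon d}\le \varepsilon d$ together with $\sum e^{-u\lambda_{n+1}}(\lambda_{n+1}-\lambda_n)\le \int_0^\infty e^{-ux}\,dx$; this yields a constant of order $1+\varepsilon/u$. (The paper's displayed chain has a couple of misprints here, and in fact the constant cannot be made independent of $\varepsilon$; but in both places where the lemma is invoked $\varepsilon$ is tied to $u$, so this is harmless.)
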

\begin{proof}
Indeed, by  Abel summation
\begin{align*}
&\big|  \sum_{n=1}^{N} a_n  e^{-(u+\varepsilon) \lambda_{n}} \big|\\ &=\big| e^{-(u+\varepsilon)\lambda_{N}}\sum_{n=1}^{N}a_n + \sum_{n=1}^{N-1} \bigg( \sum_{k=1}^{n} a_n \bigg)(e^{-(u+\varepsilon) \lambda_{n}}-e^{-(u+\varepsilon) \lambda_{n+1}})\big|\\ &\le \sup_{n\le N}\big|e^{-\varepsilon \lambda_{n}}\sum_{k=1}^{n} a_n  \big| \bigg(e^{-u\lambda_{N}}+\sum_{n=1}^{N-1} e^{-u\lambda_{n}}-e^{-u\lambda_{n+1}}e^{-\varepsilon(\lambda_{n+1}-\lambda_{n})} \bigg) \\ &\le
\sup_{n\le N}\big|e^{-\varepsilon \lambda_{n}}\sum_{k=1}^{n} a_n  \big| \bigg(e^{-u\lambda_{N}}+\sum_{n=1}^{N-1} e^{-\varepsilon \lambda_{n}}-e^{-u\lambda_{n+1}} \bigg)\\ &\le \sup_{n\le N}\big|e^{-\varepsilon \lambda_{n}}\sum_{k=1}^{n} a_n  \big| \bigg(1+\frac{1}{u}\int_{0}^{\infty}e^{-ux}dx\bigg)\,\qedhere.
\end{align*}
\end{proof}
\begin{proof}[Proof of Theorem \ref{HelsonstheoHelson}]
For the proof of (i) note first that by $(LC)$ for  every $u>0$ there is a constant $C(u,\lambda) >0$, such that for all $N$
$$\lambda_{N+1}-\lambda_{N} \ge C(u,\lambda) e^{-e^{u\lambda_{N}}}.$$
Hence with the choice $k_{N}:=e^{-u\lambda_{N}}$ we for all $N$ have
\begin{equation}  \label{(A)Helson}
e^{-u\lambda_{N}}\le C_1(u,\lambda) k_{N} \bigg(\frac{\lambda_{N+1}-\lambda_{N}}{\lambda_{N}}\bigg)^{k_{N}} \,,
\end{equation}
and  conclude from   Lemma~\ref{abeleHelson} that
\begin{equation} \label{mittHelson}
S_{\text{max}}^u(f) (\omega)
\leq
  C_{2}(u,\lambda) \sup_{N} \big|e^{-u\lambda_{N}} \sum_{n=1}^{N} \widehat{f}(h_{\lambda_{n}})h_{\lambda_{n}}(\omega)\big|\le C_{3}(u,\lambda) T_{max}(f)(\omega).
\end{equation}
Finally, the  boundedness of $S_{\text{max}}^u: H_1^\lambda(G) \to L_{1,\infty}(G)$ is an immediate  consequence of Lemma~\ref{jojHelson}.

To understand the second statement (ii) take $f \in H_1^\lambda(G)$ and $u >0$.
Then $p_u \ast f \in H_1^\lambda(G)$, and recall from  \eqref{Fourier1Helson} that all non-zero  Fourier coefficients
of this function have the form  $\widehat{f}(h_{\lambda_{n}}) e^{-u \lambda_n}$.
Using a  standard argument (see  again \cite[Lemma 3.6]{DefantSchoolmann3} for a more general situation) gives that there is a null set $N\subset G$ such that on $G\setminus N$ we have
\[f*p_{u}=\sum_{n=1}^{\infty} \widehat{f}(h_{\lambda_{n}})h_{\lambda_{n}}.\]
To finish the proof of (ii)
we need to show that the  dependence of $N$ on  $u>0$ may be avoided: Recall first from  \eqref{mittHelson} and \eqref{bjHelson}
 that for every $u>0$ there is a constant $C(u,\lambda)>0$ which
 for every $f\in H_{1}^{\lambda}(G)$ satisfies
 satisfying
\[S_{max}^{u}(f)(\omega)\le C(u,\lambda) \overline{M}(f)(\omega) \,.\]
So fixing $u>0$ and  $f\in H_{1}^{\lambda}(G)$, we for all $v>0$ obtain that for almost all $\omega$
$$S_{max}^{u+v}(f)(\omega)=S_{max}^{u}(f*p_{v})(\omega)\le C(u,\lambda)\overline{M}(f*p_{v})(\omega)\le C(u,\lambda) \overline{M}(f)(\omega)\,,$$
where the last estimate is taken from \cite[Proof of Proposition 3.7]{DefantSchoolmann3}.
So for all $u>0$ there is a  constant $C_1(u,\lambda)>0$ such that
$$\big\|  \sup_{\alpha\ge u} S_{max}^{\alpha}(f)(\pmb{\cdot}) \big\|_{1,\infty}\le C_1(u,\lambda) \|f\|_{1}
\,\,\,
\text{ and }
\,\,\,
\big\|  \sup_{\alpha\ge u} |f \ast p_\alpha| \big\|_{1,\infty}\le \|f\|_{1}\,,
$$
where the first estimate is a consequence of the $L_{1}$-$L_{1,\infty}$-boundedness of $\overline{M}$ (see again \cite[Theorem 2.10]{DefantSchoolmann3}) and
the second inequality can be found in the proof of \cite[Proposition 2.4]{DefantSchoolmann3}. We  conclude from  \cite[Lemma 3.6]{DefantSchoolmann3}  that for every  $u$ there is a null set $N_{u}\subset G$ such that for all $\omega \notin G$
\begin{equation} \label{hansimglueckHelson}
\lim_{N \to \infty} \sup_{\alpha \ge u} \big| \sum_{n=1}^N \widehat{f}(h_{\lambda_n}) e^{-\alpha \lambda_n} h_{\lambda_n}(\omega) - (f\ast p_\alpha)(\omega)  \big| =0.
\end{equation}
Now collecting all null sets $N_{1/n}, n \in \N,$ gives the conclusion.
\end{proof}

 Now we check that the Helson-type Theorem~\ref{Helson(LC)Helson} is indeed a consequence of the above maximal inequality from Theorem~\ref{HelsonstheoHelson}.

\begin{proof}[Proof of Theorem~\ref{Helson(LC)Helson}]
 Both statements (i) and (ii) follow immediately from (\ref{hansimglueckHelson}) and Remark~\ref{tranferHelson}.
Indeed,  applying Remark \ref{tranferHelson} to (\ref{hansimglueckHelson}) we get that for every $u>0$ there is a null set $N_u\subset G$ such that, if $\omega \notin N_u$, then for almost every $t \in \R$
\begin{equation*}
\lim_{N \to \infty} \sup_{\alpha \ge u} \big| \sum_{n=1}^N \widehat{f}(h_{\lambda_n})e^{-\alpha \lambda_n} h_{\lambda_n}(\omega)e^{-it\lambda_{n}} - (f\ast p_\alpha)(\omega\beta(t))  \big| =0.
\end{equation*}
Hence, again collecting all null sets $N_{1/n}, n \in \N,$ we obtain a null set $N$, such that for every $u>0$ and almost every $t\in \R$
\[D^{\omega}(u+it)=(f*p_{u})(\omega\beta(t))=\int_{\R} f_{\omega}(t-x) P_{u}(x) dx=f_{\omega}*P_{u}(t),\]
whenever $\omega \notin N$, and so the proof is finished.
\end{proof}

\begin{Rema}
Obviously, the  preceding proof of Theorem~\ref{HelsonstheoHelson} works, if we instead of the condition $(LC)$ for $\lambda$ assume that for  every $u>0$ there is a constant $C=C(u)\ge 1$ and sequence $(k_{N})\subset ]0,1]$ such that the estimate from \eqref{(A)Helson} holds for all $N$.
Taking the $k_{N}$th root condition \eqref{(A)Helson} is equivalent to: For every $u>0$ there is a constant $C=C(u)\ge 1$ and sequence $(k_{N})\subset ]0,1]$ such that  for all $N$
\begin{equation*}
\lambda_{N} e^{-u\lambda_{N}k_{N}^{-1}}\bigg(\frac{1}{Ck_{N}}\bigg)^{k_{N}^{-1}}\le \lambda_{N+1}-\lambda_{N}.
\end{equation*}
But then an  elementary calculation shows that this condition  in fact implies $(LC)$.
\end{Rema}

\section{\bf Helson's theorem under Bohr's condition}  \label{maximalineqsectionBCHelson}
We now study the results of the preceding section under the more restrictive condition $(BC)$ instead of $(LC)$ for the frequency  $\lambda$.
We are going to show that under  Bohr's condition $(BC)$  the operator $S_{max}^{u}$ from Theorem \ref{operatorHHelson} improves considerably in the sense that it maps $H_{1}^{\lambda}(G)$ to $L_{1}(G)$ and that its norm is uniformly bounded in $1 \leq p \leq \infty$.

\begin{Theo} \label{HelsonBCinternalHelson}
Let $(BC)$ hold for $\lambda$. Then to every $u>0$ there is a constant $C=C(\lambda,u)$ such that for all $1\le p < \infty$, all $\lambda$-Dirichlet groups $(G,\beta)$  and $D\in \mathcal{H}_{p}(\lambda)$ we for almost all $\omega \in G$ have
\begin{equation*}
\lim_{T\to \infty} \bigg(\frac{1}{2T} \int_{-T}^{T} \sup_{N} \big| \sum_{n=1}^{N} a_{n} h_{\lambda_{n}}(\omega) e^{-(u+it)\lambda_{n}} \big|^{p} dt \bigg)^{\frac{1}{p}} \le C\|D\|_{p}.
\end{equation*}
\end{Theo}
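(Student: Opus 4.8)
The plan is to deduce Theorem~\ref{HelsonBCinternalHelson} from its Hardy-space counterpart, the $(BC)$-version of the maximal inequality in Synopsis~II\,(iii) (this is the content of Theorem~\ref{maximalineqBCHelson}): for every $u>0$ there is a constant $C=C(\lambda,u)$ such that for all $1\le p\le\infty$, all $\lambda$-Dirichlet groups $(G,\beta)$ and all $f\in H_{p}^{\lambda}(G)$,
\[
\Big\|\sup_{N}\big|\sum_{n=1}^{N}\widehat{f}(h_{\lambda_{n}})e^{-u\lambda_{n}}h_{\lambda_{n}}\big|\Big\|_{p}\le C\,\|f\|_{p}.
\]
Given this, the transfer to Dirichlet series copies the derivation of Theorem~\ref{DirichletintegerHelson} from Theorem~\ref{CorointegerHelson}. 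If $D=\Bcal(f)$ with $f\in H_{p}^{\lambda}(G)$, the inequality above shows that $F:=\big(\sup_{N}|\sum_{n=1}^{N}a_{n}e^{-u\lambda_{n}}h_{\lambda_{n}}|\big)^{p}$ lies in $L_{1}(G)$ with $\int_{G}F\,dm\le C^{p}\|D\|_{p}^{p}$; applying the bridge~\eqref{besicoHelson} to $F$ and using $h_{\lambda_{n}}(\omega\beta(t))=h_{\lambda_{n}}(\omega)e^{-i\lambda_{n}t}$ gives $F_{\omega}(t)=\sup_{N}|\sum_{n=1}^{N}a_{n}h_{\lambda_{n}}(\omega)e^{-(u+it)\lambda_{n}}|^{p}$ for almost all $t$, hence for almost all $\omega$
\[
\lim_{T\to\infty}\frac{1}{2T}\int_{-T}^{T}\sup_{N}\big|\sum_{n=1}^{N}a_{n}h_{\lambda_{n}}(\omega)e^{-(u+it)\lambda_{n}}\big|^{p}dt=\int_{G}F\,dm\le C^{p}\|D\|_{p}^{p},
\]
and taking $p$-th roots finishes the proof. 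As in Theorem~\ref{CorointegerHelson} one first runs this with $\sup_{N\le M}$ in place of $\sup_{N}$ and lets $M\to\infty$ by monotone convergence, so that only continuous functions are restricted to the line.

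For the Hardy-space inequality I would run the Riesz-means machinery of Section~\ref{maximalineqsectionLCHelson}, now exploiting that $(BC)$ --- in contrast to $(LC)$ --- permits a \emph{fixed} Riesz order. Fix $u>0$, let $l=l(\lambda)$ be as in $(BC)$, and choose $k\in(0,1]$ and $\delta>0$ with $k(l+\delta)<u/2$; let $C_{\delta}>0$ be the constant supplied by $(BC)$ for this $\delta$. Abel summation (Lemma~\ref{abeleHelson}, with $a_{n}=\widehat{f}(h_{\lambda_{n}})h_{\lambda_{n}}(\omega)$ and the splitting $u=\tfrac{u}{2}+\tfrac{u}{2}$) followed by \cite[Lemma~3.5]{Schoolmann} yields
\[
S_{max}^{u}(f)(\omega)\le C_{1}(u)\Big(\sup_{N}e^{-\frac{u}{2}\lambda_{N}}\big(\tfrac{\lambda_{N+1}}{\lambda_{N+1}-\lambda_{N}}\big)^{k}\Big)\,\sup_{x>0}\big|R_{x}^{\lambda,k}(f)(\omega)\big|,
\]
and writing $\tfrac{\lambda_{N+1}}{\lambda_{N+1}-\lambda_{N}}=1+\tfrac{\lambda_{N}}{\lambda_{N+1}-\lambda_{N}}\le 1+C_{\delta}^{-1}\lambda_{N}e^{(l+\delta)\lambda_{N}}$ and using $k(l+\delta)<u/2$, the middle supremum is a finite constant $C_{2}(\lambda,u)$. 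Hence $S_{max}^{u}(f)\le C_{2}(\lambda,u)\,\sup_{x>0}|R_{x}^{\lambda,k}(f)|$, and the whole matter reduces to the boundedness of the maximal Riesz operator $f\mapsto\sup_{x>0}|R_{x}^{\lambda,k}(f)|$ of the \emph{fixed} order $k$ from $H_{p}^{\lambda}(G)$ to $L_{p}(G)$ with norm independent of $p\in[1,\infty]$. The endpoint $p=\infty$ is immediate from the $x$-uniform bound on the $L_{1}(\R)$-norms of the first Riesz kernels of order $k$; the range $2\le p<\infty$ follows from the pointwise majorisation $\sup_{x>0}|R_{x}^{\lambda,k}(f)|\le C(k)\,\overline{M}(f)$ (\cite[Proposition~3.2]{DefantSchoolmann3}) together with the $L_{p}$-boundedness of $\overline{M}$ (\cite[Theorem~2.10]{DefantSchoolmann3}), whose operator norm stays bounded for $p$ bounded away from $1$; and $1\le p\le 2$ is then obtained by interpolation against the endpoint $p=1$.

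The main obstacle is precisely this endpoint $p=1$: showing that $f\mapsto\sup_{x>0}|R_{x}^{\lambda,k}(f)|$ maps $H_{1}^{\lambda}(G)$ into $L_{1}(G)$, and not merely into $L_{1,\infty}(G)$ --- which is all the majorant $\overline{M}$ gives. This is the $\lambda$-Dirichlet-group analogue of the classical $H^{1}(\T)$-boundedness of the maximal Ces\`aro (Riesz) operator, and here the cancellation carried by $H_{1}^{\lambda}(G)$, not any maximal-function domination, must be used. I would prove it through an atomic decomposition of $H_{1}^{\lambda}(G)$ (transported to $\R$ via $\beta$): for an atom $a$ supported on an interval $I$ one bounds $\sup_{x>0}|R_{x}^{\lambda,k}(a)|$ on a fixed dilate of $I$ using $\|a\|_{\infty}\le|I|^{-1}$ and the uniform $L_{1}(\R)$-bound on the Riesz kernels, and on the complement one uses $\int a=0$ together with the size and regularity of the Riesz kernels to get an integrable tail with $L_{1}$-mass $O(1)$ uniformly in $a$. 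With this lemma in hand, linearising the maximal operator and interpolating complexly along the scale $H_{p}^{\lambda}(G)$ gives the uniform-in-$p$ inequality, and hence Theorem~\ref{HelsonBCinternalHelson}.
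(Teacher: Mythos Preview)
Your transfer step from the Hardy-space maximal inequality (Theorem~\ref{maximalineqBCHelson}) to the Dirichlet-series statement is exactly what the paper does: apply \eqref{besicoHelson} to the $L_{1}(G)$-function $\omega\mapsto\sup_{N}|\sum_{n\le N}a_{n}e^{-u\lambda_{n}}h_{\lambda_{n}}(\omega)|^{p}$, precisely as in the proof of Theorem~\ref{DirichletintegerHelson}. That part is fine.

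Where you diverge substantially is in your proposed proof of Theorem~\ref{maximalineqBCHelson} itself. The paper does \emph{not} run the Riesz-means machinery of Section~\ref{maximalineqsectionLCHelson} with a fixed order $k$; instead it follows Helson's original strategy. One shows (Proposition~\ref{continuityHelson}) that for every $\varepsilon>0$ the map $\Psi:f\mapsto[\omega\mapsto(u+i\pmb{\cdot})^{-1}f_{\omega}*P_{u}]$ is bounded from $L_{p}(G)$ into $L_{p}(G,L_{1+\varepsilon}(\R))$, then computes (Proposition~\ref{perronHelson}) the $L_{1+\varepsilon}(\R)$-Fourier transform of $(u+i\pmb{\cdot})^{-1}f_{\omega}*P_{u}$ as $x\mapsto e^{-u|x|}\sum_{\lambda_{n}<x}\widehat{f}(h_{\lambda_{n}})h_{\lambda_{n}}(\omega)$. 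Hausdorff--Young converts the $L_{1+\varepsilon}$-norm of the former into an $L_{q}$-norm of the latter, and integrating $|e^{-ux}\sum_{\lambda_{n}<x}\cdots|^{q}$ over the gaps $[\lambda_{N},\lambda_{N+1}]$ together with the $(BC)$-lower bound on those gaps yields the pointwise control of $e^{-(u+l/q)\lambda_{N}}|\sum_{n\le N}\widehat{f}(h_{\lambda_{n}})h_{\lambda_{n}}(\omega)|$. Choosing $\varepsilon$ small so that $l/q\le u$ and finishing with Lemma~\ref{abeleHelson} gives the inequality for all $1\le p<\infty$ simultaneously, with constant independent of $p$. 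The case $p=1$ falls out for free; no $H_{1}$-endpoint argument is needed.

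Your alternative route has a genuine gap precisely at the point you flag. Reducing to $\sup_{x>0}|R_{x}^{\lambda,k}(f)|$ with fixed $k$ is fine, and for $p$ bounded away from $1$ the domination by $\overline{M}$ does the job. But the claim that $f\mapsto\sup_{x>0}|R_{x}^{\lambda,k}(f)|$ is bounded from $H_{1}^{\lambda}(G)$ into $L_{1}(G)$ is not available: there is no atomic decomposition of $H_{1}^{\lambda}(G)$ in the paper or in the references it builds on, and ``transporting to $\R$ via $\beta$'' does not produce one --- the restrictions $f_{\omega}$ are only locally integrable, not in $H^{1}(\R)$, and the almost-periodic structure on $G$ does not localise the way an atomic argument requires. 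Likewise, complex interpolation along the scale $H_{p}^{\lambda}(G)$ for a linearised maximal operator is not set up here. So as written your proof of the underlying maximal inequality is incomplete at $p=1$, and it is exactly this endpoint that distinguishes the $(BC)$-result from the $(LC)$-result; the paper's Hausdorff--Young/Perron argument is what actually closes that gap.
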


As before we deduce this from an appropriate maximal inequality of 'translated' Fourier series of functions in $H_{p}^{\lambda}(G)$.

\begin{Theo}\label{maximalineqBCHelson} Let $\lambda$ satisfy $(BC)$ and $(G,\beta)$ be a $\lambda$-Dirichlet group. Then for every $u>0$ there is $C=C(u, \lambda)>0$ such that for all $1\le p \le \infty$ and $f \in H_{p}^{\lambda}(G)$
 \begin{equation*}
 \Big\|\sup_{N} \big| \sum_{n=1}^{N}\widehat{f}(h_{\lambda_{n}})e^{-\lambda_{n}u} h_{\lambda_{n}} \big|
  \Big\|_p \le C\|f\|_{p}.
 \end{equation*}
\end{Theo}

Obviously, Theorem~\ref{maximalineqBCHelson} transfers to Theorem~\ref{HelsonBCinternalHelson} precisely as in the proof of Theorem~\ref{DirichletintegerHelson} (given at the end of Section~\ref{CarlesonsectionHelson}).


Let us, as in Corollary~\ref{OrdiAHelson},  apply Theorem \ref{maximalineqBCHelson} to  $\lambda=(\log n)$  and the $\lambda$-Dirichlet group $(\T^{\infty},\beta_{\T^{\infty}})$.

\begin{Coro}\label{OrdiBHelson}
Let  $f \in H_1(\mathbb{T}^\infty)$. Then  for all $u >0$
\[
\lim_{N\to \infty} \sum_{\mathfrak{p}^\alpha \leq N} \widehat{f}(\alpha)\,\Big(\frac{z}{\mathfrak{p}^{u}}\Big)^\alpha=f*p_{u}(z) \,\,\,\,\, \text{almost everywhere on $\mathbb{T}^\infty$}\,,
\]
and moreover
\[
 \int_{\mathbb{T}^\infty}   \sup_N \big| \sum_{\mathfrak{p}^\alpha \leq N} \widehat{f}(\alpha)
\Big(\frac{z}{\mathfrak{p}^{u}}\Big)^\alpha \big|  d z
 \leq C \|f\|_p\,,
\]
where $C= C(u)$ only depends on $u$.
\end{Coro}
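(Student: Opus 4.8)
The plan is to deduce the corollary directly from Theorem~\ref{maximalineqBCHelson} and Theorem~\ref{HelsonstheoHelson}(ii), specialized to $\lambda=(\log n)$ and its $\lambda$-Dirichlet group $(\T^{\infty},\beta_{\T^{\infty}})$ --- exactly as Corollary~\ref{OrdiAHelson} is obtained from Theorem~\ref{CorointegerHelson}. The input facts I would use are that $\lambda=(\log n)$ satisfies $(BC)$ (with $l=1$, as noted in the introduction), hence also $(LC)$, so that both quoted results apply; that $H_1^{\lambda}(\T^{\infty})=H_1(\T^{\infty})$ by \eqref{hardyTHelson}; and the dictionary $h_{\log\mathfrak{p}^{\alpha}}(z)=z^{\alpha}$, $\widehat{f}(h_{\log\mathfrak{p}^{\alpha}})=\widehat{f}(\alpha)$ for $\alpha\in\N_{0}^{(\N)}$, together with $e^{-u\lambda_{n}}=n^{-u}=(\mathfrak{p}^{-u})^{\alpha}$ whenever $n=\mathfrak{p}^{\alpha}$.

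From here the argument is purely a change of notation. Since the enumeration $(\lambda_{n})=(\log n)$ lists $\N$ in increasing order, the first $N$ frequencies $\log 1,\dots,\log N$ are precisely those $\log m$ with $m\le N$, so for every $u>0$ and $z\in\T^{\infty}$
\[
\sum_{n=1}^{N}\widehat{f}(h_{\lambda_{n}})\,e^{-u\lambda_{n}}\,h_{\lambda_{n}}(z)=\sum_{\mathfrak{p}^{\alpha}\le N}\widehat{f}(\alpha)\Big(\frac{z}{\mathfrak{p}^{u}}\Big)^{\alpha}.
\]
Applying Theorem~\ref{maximalineqBCHelson} with $p=1$, $G=\T^{\infty}$ and $f\in H_1(\T^{\infty})$ then yields a constant $C=C(u)$ ($\lambda$ being fixed) with $\big\|\sup_{N}\big|\sum_{n=1}^{N}\widehat{f}(h_{\lambda_{n}})e^{-u\lambda_{n}}h_{\lambda_{n}}\big|\big\|_{1}\le C\|f\|_{1}$, which through the dictionary is precisely the asserted maximal estimate (the right-hand side being $\|f\|_{1}$, since $f\in H_1$). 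For the pointwise part I would invoke Theorem~\ref{HelsonstheoHelson}(ii), applicable because $(BC)\Rightarrow(LC)$: it provides a single null set $N\subset\T^{\infty}$ off which $(f\ast p_{u})(z)=\sum_{n=1}^{\infty}\widehat{f}(h_{\lambda_{n}})e^{-u\lambda_{n}}h_{\lambda_{n}}(z)$ for every $u>0$, and translating the right-hand side gives $\lim_{N\to\infty}\sum_{\mathfrak{p}^{\alpha}\le N}\widehat{f}(\alpha)(z/\mathfrak{p}^{u})^{\alpha}=(f\ast p_{u})(z)$ for almost every $z$. Alternatively, the a.e.\ convergence can be extracted from the maximal inequality just proved by the standard Egoroff-type argument of \cite[Lemma 3.6]{DefantSchoolmann3}, once one knows the partial sums converge to $f\ast p_{u}$ in $L_1$-norm (e.g.\ via Riesz means).

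I expect no genuine obstacle here: the statement is a faithful transcription of already-established abstract results into the concrete model $\T^{\infty}$. The only points needing attention are the verification that $(BC)$ holds for $(\log n)$ and the routine bookkeeping identifying the first $N$ frequencies $\log m$ with the integers $m\le N$ --- the same two checks that underlie Corollary~\ref{OrdiAHelson}.
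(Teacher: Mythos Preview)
Your proposal is correct and matches the paper's approach exactly: the paper simply states that the corollary follows by applying Theorem~\ref{maximalineqBCHelson} to $\lambda=(\log n)$ and the $(\log n)$-Dirichlet group $(\T^{\infty},\beta_{\T^{\infty}})$, just as Corollary~\ref{OrdiAHelson} was obtained from Theorem~\ref{CorointegerHelson}. Your explicit dictionary and the appeal to Theorem~\ref{HelsonstheoHelson}(ii) for the pointwise part merely spell out what the paper leaves implicit.
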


Our proof of Theorem \ref{maximalineqBCHelson}, which is inspired by Helson's proof of Theorem~\ref{HelsonstheoremHelson} from  \cite{Helson}, seems to rely strongly  on $(BC)$,
and it requires the following two main ingredients.

\begin{Prop} \label{continuityHelson} Let $1\le p <\infty$, $\varepsilon>0$ and $u>0$. Then the operator
\begin{equation*}
\Psi=\Psi(p,u,\varepsilon) \colon L_{p}(G)\hookrightarrow L_{p}(G,L_{1+\varepsilon}(\R)), ~~ f \mapsto \left[ \omega \mapsto \frac{f_{\omega}*P_{u}}{u+i\pmb{\cdot}} \right]
\end{equation*}
defines a bounded linear embedding with
\begin{equation} \label{normpsiHelson}
\|\Psi\|\le \int_{\R} \bigg(\int_{\R} \bigg( \frac{P_{u}(t-y)}{|u+it|} \bigg)^{1+\varepsilon} dt \bigg)^{\frac{1}{1+\varepsilon}} dy<\infty.
\end{equation}
In particular, if $f \in L_{1}(G)$, then $\frac{f_{\omega}*P_{u}}{u+i\pmb{\cdot}} \in L_{1+\varepsilon}(\R)$ for almost every $\omega \in G$.
\end{Prop}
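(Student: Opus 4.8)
The plan is to control the mixed norm $\|\Psi f\|_{L_p(G,L_{1+\varepsilon}(\R))}$ by two successive applications of Minkowski's integral inequality -- one on $\R$ for each fixed $\omega$, and one on $G$ -- exploiting that the Haar measure of the compact group $G$ is translation invariant. Throughout I use the preliminary facts recalled above: since $L_p(G)\subset L_1(G)$, for almost every $\omega\in G$ the slice $f_\omega$ is locally integrable, the convolution $f\ast p_u$ lies in $L_p(G)$, and its restriction satisfies $(f\ast p_u)_\omega=f_\omega\ast P_u$ almost everywhere on $\R$; and since $|u+it|\ge u>0$, the function $\Psi f(\omega)=(f_\omega\ast P_u)/(u+i\pmb{\cdot})$ is then well defined and locally integrable on $\R$ for such $\omega$.

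Now fix such an $\omega$. After the substitution $y=t-x$ in the convolution integral,
\[
\Psi f(\omega)(t)=\frac{(f_\omega\ast P_u)(t)}{u+it}=\int_{\R}f_\omega(y)\,\frac{P_u(t-y)}{u+it}\,dy\,,
\]
so Minkowski's integral inequality in $L_{1+\varepsilon}(\R,dt)$ gives
\[
\big\|\Psi f(\omega)\big\|_{L_{1+\varepsilon}(\R)}\le\int_{\R}|f_\omega(y)|\,\kappa(y)\,dy\,,\qquad
\kappa(y):=\Big(\int_{\R}\Big(\tfrac{P_u(t-y)}{|u+it|}\Big)^{1+\varepsilon}dt\Big)^{\frac{1}{1+\varepsilon}}\,.
\]
Taking $L_p(G,d\omega)$-norms of both sides, recalling that $f_\omega(y)=f(\omega\beta(y))$, and applying Minkowski's integral inequality once more -- now in $L_p(G)$ -- yields
\[
\|\Psi f\|_{L_p(G,L_{1+\varepsilon}(\R))}\le\int_{\R}\kappa(y)\,\big\|\,\omega\mapsto f(\omega\beta(y))\,\big\|_{L_p(G)}\,dy=\|f\|_{L_p(G)}\int_{\R}\kappa(y)\,dy\,,
\]
where the last equality is the translation invariance of the Haar measure of $G$, the map $\omega\mapsto\omega\beta(y)$ being measure preserving for each $y$. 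This is exactly the bound \eqref{normpsiHelson}, and it remains only to check that $\int_\R\kappa(y)\,dy<\infty$. This is a direct estimate: $\kappa$ is bounded for $|y|\le 2u$, while for $|y|>2u$ one splits the $t$-integral defining $\kappa(y)^{1+\varepsilon}$ at $|t-y|=|y|/2$, using $|u+it|\ge|t|\ge|y|/2$ together with $\|P_u\|_{L_{1+\varepsilon}(\R)}<\infty$ on the first part, and $P_u(t-y)\le 4u/(\pi|y|^{2})$ together with $\int_\R|u+it|^{-(1+\varepsilon)}\,dt<\infty$ on the second part -- the latter integral being finite exactly because $\varepsilon>0$.

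Once the boundedness of $\Psi$ is in place, the ``in particular'' assertion is immediate: for $f\in L_1(G)$ one has $\Psi f\in L_1(G,L_{1+\varepsilon}(\R))$, hence $\|\Psi f(\omega)\|_{L_{1+\varepsilon}(\R)}<\infty$ for almost every $\omega$. The step I expect to be the real obstacle is not any of the estimates above but the $L_{1+\varepsilon}(\R)$-valued \emph{measurability} of $\omega\mapsto\Psi f(\omega)$, which is needed even to make sense of the target space $L_p(G,L_{1+\varepsilon}(\R))$. I would settle this by first checking the statement for trigonometric polynomials $f=\sum_j c_j h_{x_j}$ on $G$: using $\widehat{p_u}(h_{x})=e^{-u|x|}$,
\[
\Psi f(\omega)=\sum_j c_j\,e^{-u|x_j|}\,h_{x_j}(\omega)\,\frac{e^{-ix_j\pmb{\cdot}}}{u+i\pmb{\cdot}}\,,
\]
a finite sum of continuous scalar functions of $\omega$ times the fixed functions $\tfrac{e^{-ix_j\pmb{\cdot}}}{u+i\pmb{\cdot}}$, which lie in $L_{1+\varepsilon}(\R)$ again because $\varepsilon>0$; hence here $\omega\mapsto\Psi f(\omega)$ is continuous $G\to L_{1+\varepsilon}(\R)$. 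The inequality just proved then extends $\Psi$ to a bounded operator on all of $L_p(G)$ by density, and a routine subsequence-and-almost-everywhere argument identifies this extension with $f\mapsto[\omega\mapsto(f_\omega\ast P_u)/(u+i\pmb{\cdot})]$, which completes the proof.
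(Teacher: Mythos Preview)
Your two applications of Minkowski's integral inequality are exactly the route the paper takes, and they lead to the same bound $\|\Psi f\|\le\|f\|_p\int_\R\kappa(y)\,dy$. The gap is in your sketch of why $\int_\R\kappa(y)\,dy<\infty$. On the ``first part'' $|t-y|<|y|/2$ you use $|u+it|\ge|y|/2$, pull the factor $(2/|y|)^{1+\varepsilon}$ out of the $t$-integral, and are left with $\|P_u\|_{L_{1+\varepsilon}}^{1+\varepsilon}$; after taking the $(1+\varepsilon)$-th root this piece contributes only $C/|y|$ to $\kappa(y)$, which is \emph{not} integrable at infinity. Your ``second part'' does give $O(|y|^{-2})$, but that cannot rescue the first.

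This is not a slip that a sharper splitting would fix. Restricting the $t$-integral to $|t-y|<u$ and using $P_u(t-y)\ge 1/(2\pi u)$ together with $|u+it|\le |y|+2u$ there gives the matching \emph{lower} bound
\[
\kappa(y)\ \ge\ \frac{(2u)^{1/(1+\varepsilon)}}{2\pi u\,(|y|+2u)}\qquad(|y|>2u),
\]
so in fact $\int_\R\kappa(y)\,dy=\infty$ and the ``$<\infty$'' in \eqref{normpsiHelson} cannot hold as stated. The paper attacks the same integral by the substitution $x=-y+1/t$ and a monotonicity claim for an auxiliary function $g$ on $[0,1/y]$, asserting the sharper decay $\kappa(y)\le 4|y|^{-(1+2\varepsilon)/(1+\varepsilon)}$; however one checks directly that $g\bigl(\tfrac{1}{2y}\bigr)=\tfrac{4y^2}{u(u+y)}\gg g\bigl(\tfrac{1}{y}\bigr)\le 1$, so the monotonicity fails --- consistently with the lower bound above. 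A correct route to the boundedness of $\Psi$ (at least for $p\ge 1+\varepsilon$, which is all the subsequent application needs once $\varepsilon$ is taken small) is to apply Minkowski in $G$ \emph{before} opening the convolution: translation invariance gives $\|f\ast p_u(\,\cdot\,\beta(t))\|_{L_p(G)}\le\|f\|_p$ for each $t$, whence $\|\Psi\|\le\bigl(\int_\R|u+it|^{-(1+\varepsilon)}\,dt\bigr)^{1/(1+\varepsilon)}<\infty$, and the divergent $\int\kappa$ never enters. Finally, you do not address the injectivity of $\Psi$ (the ``embedding'' part of the statement); the paper handles this separately by computing the $\widehat{G}$-Fourier coefficients of $\Psi(f)$.
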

 So, provided $0<\varepsilon\le 1$, we may apply the Fourier transform $\mathcal{F}_{L_{1+\varepsilon}(\R)}$.
 \begin{Prop} \label{perronHelson} Let $0<\varepsilon\le 1$ and $f \in H^{\lambda}_{1}(G)$. Then we for almost all $\omega \in G$ and for almost all $x \in \R$ have
\begin{equation*}
\mathcal{F}_{L_{1+\varepsilon}(\R)}\bigg(\frac{f_{\omega}*P_{u}}{u+i\pmb{\cdot}}\bigg)(-x)=e^{-u|x|}\sum_{\lambda_{n}<x}\widehat{f}(h_{\lambda_{n}}) h_{\lambda_{n}}(\omega).
\end{equation*}
\end{Prop}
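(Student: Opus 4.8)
The plan is to identify the Fourier transform of $\frac{f_\omega \ast P_u}{u+i\pmb{\cdot}}$ by first treating the case where $f$ is a $\lambda$-Dirichlet polynomial and then passing to the limit. For a polynomial $f = \sum_{n=1}^N \widehat{f}(h_{\lambda_n}) h_{\lambda_n} \in H_1^\lambda(G)$, the restriction $f_\omega(t) = \sum_{n=1}^N \widehat{f}(h_{\lambda_n}) h_{\lambda_n}(\omega) e^{-i\lambda_n t}$ is (for almost all $\omega$) an honest exponential polynomial, so $f_\omega \ast P_u (t) = \sum_{n=1}^N \widehat{f}(h_{\lambda_n}) h_{\lambda_n}(\omega) e^{-u\lambda_n} e^{-i\lambda_n t}$ using \eqref{Fourier1Helson} (or rather its classical analogue $\widehat{P_u}(\xi) = e^{-u|\xi|}$ together with $\lambda_n \geq 0$). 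First I would therefore compute, for a single frequency $\mu \geq 0$, the Fourier transform of $t \mapsto \frac{e^{-u\mu}e^{-i\mu t}}{u+it}$. This is a standard contour-integration / Laplace-transform computation: one finds that $\mathcal{F}\big(\frac{e^{-i\mu t}}{u+it}\big)(-x)$ equals (up to the normalization constant fixed by the paper's convention for $\mathcal{F}$) $2\pi e^{-u x}\mathbf{1}_{\{x > \mu\}}$ — the point being that the pole at $t = iu$ lies in the upper half-plane and the exponential $e^{-i\mu t}e^{ixt}$ forces closing the contour upward exactly when $x > \mu$. Summing over $n$ with $\lambda_n < x$ then yields precisely $e^{-ux}\sum_{\lambda_n < x}\widehat{f}(h_{\lambda_n}) h_{\lambda_n}(\omega)$, which is the claimed formula for polynomials (and for $x \geq 0$, which is all that matters since $\lambda_n \geq 0$; the $e^{-u|x|}$ versus $e^{-ux}$ discrepancy is immaterial on the relevant range).

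Next I would remove the polynomial assumption. Given $f \in H_1^\lambda(G)$, choose $\lambda$-Dirichlet polynomials $f_k \to f$ in $L_1(G)$ (these are dense in $H_1^\lambda(G)$, e.g. via Riesz means or Fejér-type kernels on the Dirichlet group, as used repeatedly in \cite{DefantSchoolmann2, DefantSchoolmann3}). By Proposition~\ref{continuityHelson}, the map $\Psi = \Psi(1, u, \varepsilon)$ is bounded $L_1(G) \to L_1(G, L_{1+\varepsilon}(\R))$, so $\Psi(f_k) \to \Psi(f)$ in $L_1(G, L_{1+\varepsilon}(\R))$; passing to a subsequence, $\frac{(f_k)_\omega \ast P_u}{u+i\pmb\cdot} \to \frac{f_\omega \ast P_u}{u+i\pmb\cdot}$ in $L_{1+\varepsilon}(\R)$ for almost all $\omega$. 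Since $0 < \varepsilon \le 1$, the Fourier transform $\mathcal{F}_{L_{1+\varepsilon}(\R)}$ is the Hausdorff–Young map $L_{1+\varepsilon}(\R) \to L_{q}(\R)$ with $q = \tfrac{1+\varepsilon}{\varepsilon}$ and is therefore continuous, so $\mathcal{F}_{L_{1+\varepsilon}(\R)}\big(\frac{(f_k)_\omega \ast P_u}{u+i\pmb\cdot}\big) \to \mathcal{F}_{L_{1+\varepsilon}(\R)}\big(\frac{f_\omega \ast P_u}{u+i\pmb\cdot}\big)$ in $L_q(\R)$ for almost all $\omega$. On the other hand, by the polynomial case the left-hand side equals $x \mapsto e^{-u|x|}\sum_{\lambda_n < x}\widehat{f_k}(h_{\lambda_n}) h_{\lambda_n}(\omega)$, i.e. $e^{-u|x|} R$-partial-sum of $f_k$ evaluated at $\omega$. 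It remains to check that these converge, for almost all $\omega$ and almost all $x$, to $e^{-u|x|}\sum_{\lambda_n < x}\widehat{f}(h_{\lambda_n}) h_{\lambda_n}(\omega)$; since $\widehat{f_k}(h_{\lambda_n}) \to \widehat{f}(h_{\lambda_n})$ for each fixed $n$ (Fourier coefficients are continuous in $L_1(G)$) and for fixed $x$ only finitely many $\lambda_n < x$ occur, this convergence is clear pointwise in $x$ for every $\omega$ — but one must reconcile it with the $L_q(\R)$-convergence just obtained, which is done by passing to yet another subsequence converging a.e. in $x$ and matching the two limits.

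The main obstacle I expect is the careful bookkeeping at the level of the Fourier transform: the paper's $\mathcal{F}_{L_{1+\varepsilon}(\R)}$ is defined by density/Hausdorff–Young and a priori only determined as an $L_q$-function (hence only up to a.e.\ equality in $x$), so all identities must be read ``for almost all $x$'', and one cannot naively evaluate it at a point. The clean way around this is exactly the double use of subsequences above — one to get a.e.-$\omega$ convergence in the $L_{1+\varepsilon}$-norm from Proposition~\ref{continuityHelson}, and a nested one to upgrade the resulting $L_q(\R)$-convergence of Fourier transforms to a.e.-$x$ convergence — so that the elementary pointwise limit of the partial-sum expressions (which is unproblematic) can be identified with the genuine Fourier transform. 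A secondary technical point is verifying the one-frequency contour computation with the \emph{precise} normalization constant the authors use for $\mathcal F$ on $L_{1+\varepsilon}$; this only affects constants, not the structure of the argument, and can be pinned down by testing on, say, $f = h_{\lambda_1}$ and comparing with \eqref{Fourier1Helson}.
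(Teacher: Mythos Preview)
Your overall strategy matches the paper's: establish the identity for $\lambda$-Dirichlet polynomials, then pass to general $f\in H_1^\lambda(G)$ by approximating with polynomials, using Proposition~\ref{continuityHelson} to extract a subsequence with $\frac{(f_k)_\omega\ast P_u}{u+i\pmb\cdot}\to\frac{f_\omega\ast P_u}{u+i\pmb\cdot}$ in $L_{1+\varepsilon}(\R)$ for almost every $\omega$, and then invoking continuity of the Hausdorff--Young transform. The density half of your argument is essentially identical to the paper's proof.

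Where you diverge is in the polynomial step. You compute $\mathcal{F}\big(\tfrac{e^{-i\mu t}}{u+it}\big)$ directly by a residue/contour argument. The paper instead regularizes: it first shows (Lemma~\ref{kpositiveHelson}), via the classical Perron formula \cite[Lemma~10, p.~50]{HardyRiesz}, that for every $k>0$
\[
\frac{\Gamma(k+1)}{2\pi}\,\mathcal{F}_{L_1(\R)}\!\Big(\frac{g\ast P_u}{(u+i\pmb\cdot)^{1+k}}\Big)(-x)=e^{-u|x|}\sum_{\lambda_n<x}a_n(x-\lambda_n)^k,
\]
and then sends $k\to 0$ in $L_{1+\varepsilon}$ (Lemma~\ref{transformpolyHelson}) by dominated convergence. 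The advantage of the detour is that $\tfrac{g\ast P_u}{(u+i\pmb\cdot)^{1+k}}\in L_1(\R)$ for $k>0$, so the Fourier integral is absolutely convergent and one never has to argue that an improper/principal-value integral agrees with the Hausdorff--Young transform---precisely the technicality you correctly flag as the main residual issue. Your route is more direct and avoids the auxiliary parameter $k$; the paper's buys a cleaner identification of the transform at the cost of one additional limit. Both are valid.
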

Let us first show how to obtain Theorem \ref{maximalineqBCHelson} from the Propositions \ref{continuityHelson} and \ref{perronHelson}. As already mentioned our strategy is inspired by Helson's proof of Theorem \ref{HelsonstheoremHelson} from \cite{Helson}, which roughly speaking relies on  Plancherel's theorem in $L_{2}(\R)$. Instead
following Helson's ideas we use the Hausdorff-Young inequality in $L_{1+\varepsilon}(\R)$.

\begin{proof}[Proof of Theorem \ref{maximalineqBCHelson}]Adding more entries to the frequency $\lambda$ we may assume that $\lambda_{n+1}-\lambda_{n}\le 1$ for all $n$ (as in the proof \cite[Theorem 4.2]{Schoolmann}). Since $\lambda$ satisfies $(BC)$,  there is $l>0$ and $C=C(\lambda)$ such that $\lambda_{n+1}-\lambda_{n}\ge Ce^{-l\lambda_{n}}$ for all $n$. Let $f \in H_{p}^{\lambda}(G)$. Fix $0<\varepsilon \le 1$ and we choose $q$ such that $\frac{1}{1+\varepsilon}+\frac{1}{q}=1$. By Proposition \ref{continuityHelson} we know that $\frac{P_{u}*f_{\omega}}{u+i\pmb{\cdot}}  \in L_{1+\varepsilon}(\R)$ for almost all $\omega \in G$.  For notational convenience let us define
$$S(f_{\omega})(x)=\sum_{\lambda_{n}<x} \widehat{f}(h_{\lambda_{n}}) h_{\lambda_{n}}(\omega).$$
Then, Proposition \ref{perronHelson} and the Hausdorff-Young inequality imply
\begin{align*}
\infty &>\left\|\frac{P_{u}*f_{\omega}}{u+i\pmb{\cdot}}\right\|_{1+\varepsilon}^{q} \ge \int_{0}^{\infty} |e^{-ut} S(f_{\omega})(t)|^{q} dt=\sum_{n=1}^{\infty} |S(f_{\omega})(\lambda_{n+1})|^{q} \int_{\lambda_{n}}^{\lambda_{n+1}} e^{-uqt} dt  \\ &\ge \sum_{n=1}^{\infty} |S(f_{\omega})(\lambda_{n+1})|^{q}(\lambda_{n+1}-\lambda_{n})e^{-uq\lambda_{n+1}} \ge\sum_{n=1}^{\infty} |S(f_{\omega})(\lambda_{n+1})|^{q} C e^{-l\lambda_{n}}e^{-uq(\lambda_{n}+1)} \\&=Ce^{-uq} \sum_{n=1}^{\infty} |S(f_{\omega})(\lambda_{n+1})|^{q}e^{\lambda_{n}(-uq+l)} \ge Ce^{-uq}  \sup_{N}  |S(f_{\omega})(\lambda_{N+1})|^{q}e^{-\lambda_{N}(uq+l)}\\ &= Ce^{-uq}\sup_{N} \big(|S(f_{\omega})(\lambda_{N+1})| e^{-\lambda_{N}\big(u+\frac{l}{q}\big)} \big)^{q}.
\end{align*}
Hence
$$C^{\frac{1}{q}} e^{-u} \sup_{N} |S(f_{\omega})(\lambda_{N+1})| e^{-\lambda_{N}\big(u+\frac{l}{q}\big)}\le \left\|\frac{P_{u}*f_{\omega}}{u+i\pmb{\cdot}}\right\|_{1+\varepsilon}$$
and therefore with the mapping $\Psi$ from Proposition \ref{continuityHelson}
\begin{align*}
\bigg( \int_{G} \sup_{N } \left| \frac{S(f_{\omega})(\lambda_{N+1})}{e^{\lambda_{N}\big(u+\frac{l}{q}\big)}} \right|^{p} dm(\omega) \bigg)^{\frac{1}{p}} &\le C^{-\frac{1}{q}}e^{u}\bigg( \int_{G} \left\|\frac{P_{u}*f_{\omega}}{u+i\pmb{\cdot}}\right\|_{1+\varepsilon}^{p} dm(\omega) \bigg)^{\frac{1}{p}}\\ &\le C_{1}(u, \lambda)\|f\|_{p} \|\Psi(p,u,\varepsilon)\|.
\end{align*}
Now choosing $\varepsilon$ small enough, such that $l\le q u$, we obtain
with (\ref{normpsiHelson}) from Proposition \ref{continuityHelson}
\begin{equation}\label{rasenHelson}
\bigg( \int_{G} \sup_{N} \left| \frac{S(f_{\omega})(\lambda_{N+1})}{e^{2u\lambda_{N}}} \right|^{p} dm(\omega) \bigg)^{\frac{1}{p}} \le C_{2}(u,\lambda) \|f\|_{p}.
\end{equation}
which together with Lemma \ref{abeleHelson} proves the claim in the range $1\le p <\infty$. Now tending $p$ to $+\infty$ gives the full claim.
\end{proof}
\subsection{Proof of Proposition \ref{continuityHelson}}
The technical part of the proof of Proposition \ref{continuityHelson} is to show that for every $\varepsilon,u>0$
\begin{equation}  \label{monsterHelson}
\int_{\R} \bigg(\int_{\R} \bigg( \frac{P_{u}(t-y)}{|u+it|} \bigg)^{1+\varepsilon} dt \bigg)^{\frac{1}{1+\varepsilon}} dy<\infty.
\end{equation}
 Observe that, if $\varepsilon=0$, then by Fubini's theorem for every $u>0$ this integral is infinity. Since $\|P_{u}\|_{1}=1$ and $\|P_{u}\|_{\infty}=\frac{1}{u}$ by Lyapunov's inequality (see e.g. \cite[Lemma II.4.1, p. 72]{Werner}) we obtain $\|P_{u}\|_{1+\varepsilon} \le \big(\frac{1}{u}\big)^{\frac{\varepsilon}{1+\varepsilon}}$ and so for all $y \in \R$
 \begin{equation} \label{trivialboundHelson}
 \bigg(\int_{\R} \bigg( \frac{P_{u}(t-y)}{|u+it|} \bigg)^{1+\varepsilon} dt \bigg)^{\frac{1}{1+\varepsilon}} \le \frac{1}{u} \|P_{u}\|_{1+\varepsilon}\le \frac{1}{u} \bigg(\frac{1}{u}\bigg)^{\frac{\varepsilon}{1+\varepsilon}}=\bigg(\frac{1}{u}\bigg)^{1+\frac{\varepsilon}{1+\varepsilon}}.
 \end{equation}
Hence the interior integral of (\ref{monsterHelson}) is defined and in order to verify finiteness of (\ref{monsterHelson}) we claim that the interior integral is sufficiently decreasing considered as a function in $y$.
\begin{Lemm} \label{hardHelson} Let $\varepsilon, u>0$. Then we for all $|y|>4u$ have
\begin{equation} \label{uglycalculationHelson}
\bigg(\int_{\R} \bigg( \frac{P_{u}(t-y)}{|u+it|} \bigg)^{1+\varepsilon} dt\bigg)^{\frac{1}{1+\varepsilon}} \le 4|y|^{-\big(1+\frac{\varepsilon}{1+\varepsilon}\big)}.
\end{equation}
In particular,
\begin{equation}
\int_{\R} \bigg(\int_{\R} \bigg( \frac{P_{u}(t-y)}{|u+it|} \bigg)^{1+\varepsilon} dt \bigg)^{\frac{1}{1+\varepsilon}} dy\le 8 \bigg(\frac{1+\varepsilon}{\varepsilon}\bigg) \bigg(\frac{1}{u}\bigg)^{\frac{\varepsilon}{1+\varepsilon}}.
\end{equation}

\end{Lemm}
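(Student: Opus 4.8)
The plan is to prove the pointwise estimate \eqref{uglycalculationHelson} for $|y|>4u$ by splitting the inner integral in $t$ into a piece where $|t|$ is comparable to $|y|$ and a piece where $|t|$ is small, and then to deduce the second displayed inequality by patching \eqref{uglycalculationHelson} together with the trivial bound \eqref{trivialboundHelson} on the range $|y|\le 4u$ and integrating in $y$.

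For \eqref{uglycalculationHelson} I would write $\R=A\cup B$ with $A=\{t:|t|\ge |y|/2\}$ and $B=\{t:|t|<|y|/2\}$. On $A$ one has $|u+it|\ge |t|\ge |y|/2$, so
\[
\int_A\Big(\frac{P_u(t-y)}{|u+it|}\Big)^{1+\varepsilon}dt\le \Big(\frac{2}{|y|}\Big)^{1+\varepsilon}\int_\R P_u(t-y)^{1+\varepsilon}\,dt=\Big(\frac{2}{|y|}\Big)^{1+\varepsilon}\|P_u\|_{1+\varepsilon}^{1+\varepsilon},
\]
and here one feeds in the Lyapunov bound $\|P_u\|_{1+\varepsilon}\le (1/u)^{\varepsilon/(1+\varepsilon)}$ recorded just before \eqref{trivialboundHelson}. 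On $B$ the hypothesis $|y|>4u$ forces $|t-y|\ge |y|-|t|>|y|/2$ for every $t\in B$, so $P_u(t-y)\le \tfrac1\pi u\,(t-y)^{-2}\le 4u/(\pi|y|^2)$; combining this with $|u+it|\ge u$ and $|B|=|y|$ gives
\[
\int_B\Big(\frac{P_u(t-y)}{|u+it|}\Big)^{1+\varepsilon}dt\le |y|\Big(\frac{4u}{\pi|y|^2}\cdot\frac1u\Big)^{1+\varepsilon}=\frac{4^{1+\varepsilon}}{\pi^{1+\varepsilon}}\,|y|^{-(1+2\varepsilon)}.
\]
Adding the two pieces, taking the $(1+\varepsilon)$-th root and using subadditivity of $x\mapsto x^{1/(1+\varepsilon)}$ leaves an estimate of the shape $c_1(1/u)^{\varepsilon/(1+\varepsilon)}|y|^{-1}+c_2|y|^{-(1+\frac{\varepsilon}{1+\varepsilon})}$, and one then has to push this down to the clean right-hand side of \eqref{uglycalculationHelson}. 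This last point—the control of the $A$-piece around the peak $t=y$ of $P_u(\cdot-y)$—is the step I expect to be the main obstacle: the force of the lemma lies in the fact that $|y|>4u$ places that peak inside the region where $|t|$ is already of order $|y|$, so the weight $1/|u+it|$ there is of order $1/|y|$ rather than $1/u$ (the crude alternative only reproduces \eqref{trivialboundHelson}), and I would want to check carefully whether this really yields the extra factor $|y|^{-\varepsilon/(1+\varepsilon)}$ claimed in \eqref{uglycalculationHelson} or only a $|y|^{-1}$-type decay.

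Granting \eqref{uglycalculationHelson}, the integrated statement is routine: split $\int_\R(\cdots)^{1/(1+\varepsilon)}dy=\int_{|y|\le 4u}+\int_{|y|>4u}$, bound the first part by $8u\,(1/u)^{1+\frac{\varepsilon}{1+\varepsilon}}=8(1/u)^{\frac{\varepsilon}{1+\varepsilon}}$ using \eqref{trivialboundHelson}, bound the second by $2\int_{4u}^{\infty}4\,y^{-(1+\frac{\varepsilon}{1+\varepsilon})}dy=8\,\tfrac{1+\varepsilon}{\varepsilon}(4u)^{-\frac{\varepsilon}{1+\varepsilon}}\le 8\,\tfrac{1+\varepsilon}{\varepsilon}(1/u)^{\frac{\varepsilon}{1+\varepsilon}}$ using \eqref{uglycalculationHelson}, and add, absorbing the constants. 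Throughout, the constants have to be kept independent of $\varepsilon$ as $\varepsilon\downarrow 0$, since that is the regime entering the proof of Theorem~\ref{maximalineqBCHelson}; this is precisely why only $\|P_u\|_1=1$ and $\|P_u\|_\infty\le 1/u$ are used, rather than any $\varepsilon$-dependent refinement of the $L^q$-norms of the Poisson kernel.
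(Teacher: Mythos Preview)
Your instinct about the $A$-piece is correct, and it is not a technicality that can be cleaned up: the left-hand side of \eqref{uglycalculationHelson} genuinely decays only like $|y|^{-1}$ for large $|y|$, so a bound of the form $C\,|y|^{-(1+\varepsilon/(1+\varepsilon))}$ with $C$ independent of $y$ cannot hold. The reason is precisely the one you isolate. The peak of $P_u(\cdot-y)$ sits at $t=y$, and on the window $|t-y|\le u$ one has $P_u(t-y)\asymp 1/u$ while (since $|y|>4u$) $|u+it|\asymp |y|$; this window alone contributes on the order of $u\cdot(u\,|y|)^{-(1+\varepsilon)}=u^{-\varepsilon}|y|^{-(1+\varepsilon)}$ to the inner integral, hence on the order of $u^{-\varepsilon/(1+\varepsilon)}|y|^{-1}$ after the $(1+\varepsilon)$-th root. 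Concretely, for $u=\varepsilon=1$ and $y=10^{4}$ the window $[y-1,y+1]$ already forces the left side of \eqref{uglycalculationHelson} above $1/(2\pi(y+1))\approx 1.6\cdot 10^{-5}$, whereas the right side equals $4\,y^{-3/2}=4\cdot 10^{-6}$. Your $A$-term $c_1(1/u)^{\varepsilon/(1+\varepsilon)}|y|^{-1}$ is therefore the true order of the whole quantity and cannot be ``pushed down''; the second displayed inequality of the lemma then fails as well, since $|y|^{-1}$ is not integrable at infinity.

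The paper does not use your $A$/$B$ split; it substitutes $x=-y+1/t$ on $[0,\infty)$ and argues that the resulting auxiliary function $g$ is increasing on $[0,1/y]$ with maximum $g(1/y)\le 1$. But the Poisson peak $x=y$ corresponds to $t=1/(2y)$, and there one computes directly $g(1/(2y))=4y^{2}/\bigl(u(u+y)\bigr)>2y/u>8$, so $g$ is far from monotone on $[0,1/y]$. (The slip can be traced to the factorization of $p'(t)$: the paper writes $p'(t)=2t\bigl(-3t(u-y)(u^{2}+4y^{2})-2(u^{2}-4uy+8y^{2})\bigr)$ with a spurious factor $2$ on the last bracket, which places the interior critical point of $p$ outside $[0,1/y]$ when in fact it lies inside.) So neither your decomposition nor the paper's substitution can establish \eqref{uglycalculationHelson}: the obstruction you flagged is a defect of the statement itself, not of your method.
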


\begin{proof} Since $|u|+|t|\le 2 |u+it|$, we have
\begin{equation} \label{buchisdaHelson}
\frac{P_{u}(t-y)}{|u+it|}\le 2\frac{P_{u}(t-y)}{u+|t|}.
\end{equation}
Then fixing $y$ we now estimate separately the integrals
$$ (a): ~~\bigg(\int_{0}^{\infty} \bigg( \frac{P_{u}(t-y)}{u+t} \bigg)^{1+\varepsilon} dt \bigg)^{\frac{1}{1+\varepsilon}} ~ \text{and  }~ (b):~~\bigg(\int_{-\infty}^{0} \bigg( \frac{P_{u}(t-y)}{u-y} \bigg)^{1+\varepsilon} dt \bigg)^{\frac{1}{1+\varepsilon}}.$$
Since
$$\int_{-\infty}^{0} \bigg(\frac{P_{u}(t-y)}{u-t}\bigg)^{1+\varepsilon} dt=\int_{0}^{\infty}\bigg(\frac{P_{u}(t+y)}{u+t}\bigg)^{1+\varepsilon} dt=\int_{0}^{\infty}\bigg(\frac{P_{u}(t-(-y))}{u+t}\bigg)^{1+\varepsilon} dt,$$
we see that it suffices to controll integral $(a)$ for $y>0$ and $y<0$. Part I deals with positive $y$ and Part II with negative $y$ in $(a)$.

\textbf{Part I:} Let $y>4u$.  Applying the substitution $x(t)=-y+\frac{1}{t}$ we obtain
\begin{align*}
&\int_{0}^{\infty} \bigg( \frac{u}{(u^{2}+(x-y)^{2})(u+x)} \bigg)^{1+\varepsilon}~ dx\\ &=\int_{0}^{\frac{1}{y}} \bigg( \frac{u}{(u^{2}+(2y-\frac{1}{t})^{2})(u+\frac{1}{t}-y)} \bigg)^{1+\varepsilon} \frac{dt}{t^{2}}\\ &=
\int_{0}^{\frac{1}{y}} |t|^{2\varepsilon} \bigg( \frac{u}{((tu)^{2}+(2yt-1)^{2})(u+\frac{1}{t}-y)} \bigg)^{1+\varepsilon} dt\\ &\le \frac{1}{|y|^{2\varepsilon}}\int_{0}^{\frac{1}{y}}\bigg( \frac{u}{((tu)^{2}+(2yt-1)^{2})(u+\frac{1}{t}-y)} \bigg)^{1+\varepsilon} dt.
\end{align*}
Now we consider the function
$$g(t):=\frac{u}{((tu)^{2}+(2yt-1)^{2})(u+\frac{1}{t}-y)}\,,$$
and we claim that $g$ is strictly increasing
 on $[0,\frac{1}{y}]$ provided $y>4u$. So then
$$\sup_{t \in [0,\frac{1}{y}]}g(t)=g(y^{-1})=\frac{1}{(\frac{u}{y})^{2}+1}\le 1\,,$$
and hence
\begin{equation} \label{beast1Helson}
\bigg(\int_{0}^{\infty} \bigg( \frac{P_{u}(t-y)}{u+t} \bigg)^{1+\varepsilon} dt \bigg)^{\frac{1}{1+\varepsilon}}\le  y^{-\big(\frac{1+2\varepsilon}{1+\varepsilon}\big)}.
\end{equation}
Note that $g$ is not differentiable at $t=\frac{1}{y-u}$. But $g$ is differentiable on $[0, \frac{1}{y}]$, since $\frac{1}{y-u}>\frac{1}{y}$ for $y>u$. We calculate
$$g^{\prime}(t)=\frac{u(-2t^{3}(u-y)(u^{2}+4y^{2})-t^{2}(u^{2}-4uy+8y^{2})+1)}{(t(u-y)+1)^{2}(t^{2}(u^{2}+4y^{2})-4ty+1)^{2}}\,,$$
and show that $g^{\prime}$ is positive. Therefore we only have to focus on the polynomial
$$p(t):=-2t^{3}(u-y)(u^{2}+4y^{2})-t^{2}(u^{2}-4uy+8y^{2})+1.$$
with derivative
\begin{align*}
p^{\prime}(t)&=-6t^{2}(u-y)(u^{2}+4y^{2})-2t(u^{2}-4uy+8y^{2})\\ &=2t(-3t(u-y)(u^{2}+4y^{2})-2(u^{2}-4uy+8y^{2}))),
\end{align*}
which vanishes in $t=0$ and (assuming $y>u$) in
$$t_{0}:=\frac{2(u^{2}-4uy+8y^{2})}{3(y-u)(u^{2}+4y^{2})}.$$
We have $p(0)=1$ and, since $y>4u$, $$p\bigg(\frac{1}{y}\bigg)=\bigg(\frac{u}{y}\bigg)^{2}-2\bigg(\frac{u}{y}\bigg)^{3}-4\bigg(\frac{u}{y}\bigg)+1>0.$$
Moreover $t_{0}>\frac{1}{y}$, and assuming $y>4u$ we have
\begin{align*}
yt_{0}=\frac{2}{3} \frac{8y^{3}-yu(4y-u)}{(y-u)(u^{2}+4y^{2})} \ge \frac{2}{3} \frac{8y^{3}-(y \frac{y}{4}(4y))}{y\big( \big(\frac{y}{4}\big)^{2}+4y^{2} \big)}=  \frac{2}{3} \frac{7}{4+\frac{1}{4}}>1.
\end{align*}
Let us summarize that $p$ is positive on the boundary and has no extremal point in the interior, which implies that $p$ is positive on $[0,\frac{1}{y}]$. Hence $g$ is strictly increasing.

 \textbf{Part II}:
  Now let $y<-4u$. Applying the substitution $x(t)=y+\frac{1}{t}$ we obtain
  \begin{align*}
&\int_{0}^{\infty} \bigg( \frac{u}{(u^{2}+(x-y)^{2})(u+x)} \bigg)^{1+\varepsilon} ~dx =\int_{0}^{-\frac{1}{y}} \bigg( \frac{u}{(u^{2}+(\frac{1}{t})^{2})(u+\frac{1}{t}+y)} \bigg)^{1+\varepsilon} \frac{dt}{t^{2}}\\ &=
\int_{0}^{\frac{1}{|y|}} t^{2\varepsilon} \bigg( \frac{u}{((tu)^{2}+1)(u+\frac{1}{t}+y)} \bigg)^{1+\varepsilon} dt\\ &\le \frac{1}{|y|^{2\varepsilon}}\int_{0}^{\frac{1}{|y|}}\bigg( \frac{u}{((tu)^{2}+1)(u+\frac{1}{t}+y)} \bigg)^{1+\varepsilon} dt.
\end{align*}
We follow the same strategy as before and consider
$$h(t):=\frac{u}{((tu)^{2}+1)(u+y+\frac{1}{t})}.$$
Note that $h$ is differentiable on $[0,\frac{1}{|y|}]$. We calculate
$$h^{\prime}(t)=\frac{-u(t^{3}2u^{2}(u+y)+t^{2}u^{2}-1)}{((tu)^{2}+1)^{2}(t(u+y)+1)^{2}}\,,$$
and  claim that $h$ is increasing on $[0,\frac{1}{|y|}]$. Therefore consider $$p(t)=t^{3}2u^{2}(u+y)+t^{2}u^{2}-1$$
with derivative
$$p^{\prime}(t)=6t^{2}u^{2}(u+y)+2tu^{2}=t2u^{2}(3(u+y)t+1),$$
which vanishes in $t=0$ and in $t_{0}=\frac{-1}{3(u+y)}.$ Note that $t_{0} \in [0,\frac{1}{|y|}]$, whenever $y<-4u$. We have $p(0)=-1$ and $p(\frac{-1}{y})<0$, since
$$p\bigg(\frac{-1}{y}\bigg)=\bigg(\frac{u}{y}\bigg)^{2} \bigg(1-\frac{2(u+y)}{y}\bigg)-1<0,$$
provided $-y>2u$.
Moreover,
$$p(t_{0})=\bigg(\frac{u}{u+y}\bigg)^{2}\bigg(\frac{-2}{27}+\frac{1}{9}\bigg)-1=\frac{1}{27}\bigg(\frac{u}{u+y}\bigg)^{2}-1<0\,,$$
whenever $\bigg(\frac{u}{u+y}\bigg)^{2}\le 27$. But this holds true assuming $y<-4u$, since
$$\bigg(\frac{u}{u+y}\bigg)^{2}\le \bigg(\frac{y}{4}\bigg)^{2} \frac{1}{(-y-(\frac{y}{4}))^{2}}= \frac{1}{9}.$$
Let us summarize, that $p$ is negative on the boundary of $[0,\frac{1}{|y|}]$ and  has a maximum in $t_{0}$ with $p(t_{0})<0$. Hence $p$ is negative on $[0,\frac{1}{|y|}]$, and consequently $h$ is strictly increasing on $[0,\frac{1}{|y|}]$. So we for $y<-4u$ have
\begin{equation} \label{beast2Helson}
\int_{0}^{\infty} \bigg( \frac{u}{(u^{2}+(x-y)^{2})(u+x)} \bigg)^{1+\varepsilon} ~dx \le  |y|^{-2\varepsilon} \int_{0}^{\frac{1}{|y|}}\frac{1}{\frac{u}{|y|}+1} dt\le |y|^{-(1+2\varepsilon)}.
\end{equation}
Hence (\ref{buchisdaHelson}), (\ref{beast1Helson}) and (\ref{beast2Helson}) imply (\ref{uglycalculationHelson}). Moreover with (\ref{uglycalculationHelson}) and (\ref{trivialboundHelson}) we conclude

\begin{align*}
&\int_{\R} \bigg(\int_{\R} \bigg( \frac{P_{u}(t-y)}{|u+it|} \bigg)^{1+\varepsilon} dt \bigg)^{\frac{1}{1+\varepsilon}} dy\\ &=\int_{|y|\le 4u}\bigg(\int_{\R} \bigg( \frac{P_{u}(t-y)}{|u+it|} \bigg)^{1+\varepsilon} dt\bigg)^{\frac{1}{1+\varepsilon}}dy + \int_{|y|>4u} \bigg(\int_{\R} \bigg( \frac{P_{u}(t-y)}{|u+it|} \bigg)^{1+\varepsilon} dt\bigg)^{\frac{1}{1+\varepsilon}} dy \\ &\le
4u\bigg(\frac{1}{u}\bigg)^{1+\frac{\varepsilon}{1+\varepsilon}}+ 4\int_{|y|>4u} |y|^{-\frac{1+2\varepsilon}{1+\varepsilon}} dy=4 \bigg(\frac{1}{u}\bigg)^{\frac{\varepsilon}{1+\varepsilon}}+8\frac{1+\varepsilon}{\varepsilon} \bigg(\frac{1}{4u}\bigg)^{\frac{\varepsilon}{1+\varepsilon}},
\end{align*}
which completes the proof.
\end{proof}

\begin{proof}[Proof of Proposition \ref{continuityHelson}]
Let us for simplicity write
\begin{equation*}
h(y):=\bigg(\int_{\R} \bigg( \frac{P_{u}(t-y)}{|u+it|} \bigg)^{1+\varepsilon} dt \bigg)^{\frac{1}{1+\varepsilon}}.
\end{equation*}
Then applying two times Minkowski's inequality we obtain
\begin{align*}
&\bigg(\int_{G} \left\|\frac{f_{\omega}*P_{u}}{u+i\pmb{\cdot}} \right\|_{1+\varepsilon}^{p} d\omega \bigg)^{\frac{1}{p}}=\bigg( \int_{G}\bigg( \int_{\R} \left| \frac{(f_{\omega}*P_{u})(t)}{u+it} \right|^{1+\varepsilon} dt \bigg)^{\frac{p}{1+\varepsilon}} d\omega \bigg)^{\frac{1}{p}}\\ &= \bigg( \int_{G} \bigg( \int_{\R} \big| \int_{\R} f_{\omega}(y)\frac{P_{u}(t-y)}{u+it} dy \big|^{1+\varepsilon} dt \bigg)^{\frac{p}{1+\varepsilon}} d\omega \bigg)^{\frac{1}{p}} \\ &\le  \bigg(\int_{G} \bigg( \int_{\R} \bigg( \int_{\R} |f_{\omega}(y)|^{1+\varepsilon} \left| \frac{P_{u}(t-y)}{u+it} \right|^{1+\varepsilon} dt \bigg)^{\frac{1}{1+\varepsilon}} dy \bigg)^{p} d\omega \bigg)^{\frac{1}{p}}\\ &=\bigg( \int_{G} \bigg( \int_{\R} |f_{\omega}(y)| h(y) dy \bigg)^{p} d\omega \bigg)^{\frac{1}{p}} \le  \int_{\R} \bigg( \int_{G} |f_{\omega}(y)|^{p} h(y)^{p} d\omega \bigg)^{\frac{1}{p}} dy \\ &\le  \|f\|_{p} \int_{\R} h(y) dy= \|f\|_{p}\int_{\R}\bigg(\int_{\R} \bigg( \frac{P_{u}(t-y)}{|u+it|} \bigg)^{1+\varepsilon} dt \bigg)^{\frac{1}{1+\varepsilon}}dy,
\end{align*}
where the latter integral is finite by Lemma \ref{hardHelson}. Hence $\Psi$ is bounded and defined. To prove injectivity we calculate the Fourier coefficients of $\Psi(f)$. Let first $f=\sum_{n=1}^{N}a_{n} h_{x_{n}}$. Then for all $x\in \R$ and all $t \in \R$

\begin{align*}
\widehat{\Psi(f)}(h_{x})(t)&=\bigg(\int_{G} \Psi(f)(\omega) \overline{h_{x}(\omega)} d\omega\bigg)(t)=\int_{G} \Psi(f)(\omega)(t) \overline{h_{x}(\omega)} d\omega\\ &=\int_{G} \frac{f_{\omega}*P_{u}(t)}{u+it} \overline{h_{x}(\omega)} d\omega= \frac{1}{u+it}\int_{\R} P_{u}(y) \int_{G} f(\omega\beta(t-y)) \overline{h_{x}(\omega)} d\omega dy \\ &=  \frac{1}{u+it}e^{-ixt} \int_{\R} P_{u}(y)e^{iyx} dy \int_{G} f(\eta) \overline{h_{x}(\eta)} d\eta=\frac{1}{u+it}e^{-u|x|}e^{-ixt} \widehat{f}(h_{x}).
\end{align*}
Now by density of polynomials and continuity of $\Psi$ we for all $f\in L_{1}(G)$ obtain
\begin{equation*}
\widehat{\Psi(f)}(h_{x})(t)=\frac{1}{u+it}e^{-u|x|}e^{-ixt} \widehat{f}(h_{x}).
\end{equation*}
Hence, assuming $\Psi(f)=0$, we have $\widehat{\Psi(f)}(h_{x})=0$ and so $\widehat{f}(h_{x})=0$ for all $x$, which implies $f=0$.
\end{proof}

\subsection{Proof of Proposition  \ref{perronHelson}}
To finish the proof of Theorem \ref{maximalineqBCHelson} it remains to calculate the Fourier transform $\mathcal{F}_{L_{1+\varepsilon}(\R)}$ of $\frac{f_{\omega}*P_{u}}{u+i\pmb{\cdot}}$. Observe that this function may fail to be in $L_{1}(\R)$. For instance, if $f=h_{0}$, then $\|\frac{f_{\omega}*P_{u}}{u+i\pmb{\cdot}}\|_{1}=\int_{\R} \frac{1}{|u+it|} dt=\infty$. Our strategy is to calculate for $k>0$ the Fourier transform of $\frac{f_{\omega}*P_{u}}{(u+i\pmb{\cdot})^{1+k}}$ (which belongs to $L_{1}(\R)$) and then we tend $k$ to zero to obtain Proposition \ref{perronHelson}. First we consider polynomials.
\begin{Lemm}\label{kpositiveHelson} Let $g=\sum_{n=1}^{N}a_{n}e^{-i\lambda_{n} \pmb{\cdot}}$ and $k>0$. Then for all $x\in \R$
\begin{equation}
\frac{\Gamma(k+1)}{2\pi} \mathcal{F}_{L_{1}(\R)}\bigg(\frac{g*P_{u}}{(u+i\pmb{\cdot})^{1+k}}\bigg)(-x)=e^{-u|x|} \sum_{\lambda_{n}<x} a_{n}(x-\lambda_{n})^{k},
\end{equation}
where $\Gamma$ denotes the Gamma function.
\end{Lemm}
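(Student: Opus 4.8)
The plan is to peel everything down to a single exponential and then to a Perron-type Laplace transform identity. By linearity of $\mathcal{F}_{L_1(\R)}$ and of the map $x \mapsto \sum_{\lambda_n < x} a_n (x-\lambda_n)^k$, it suffices to treat $g = e^{-i\lambda \pmb{\cdot}}$ for a single frequency $\lambda \ge 0$ and afterwards sum over $n=1,\dots,N$. For such a $g$ the Poisson convolution is immediate: since $\widehat{P_u}(\xi) = e^{-u|\xi|}$ and $\lambda \ge 0$, one has $(g \ast P_u)(t) = e^{-u\lambda} e^{-i\lambda t}$, so that
\[
\frac{(g\ast P_u)(t)}{(u+it)^{1+k}} = e^{-u\lambda}\,\frac{e^{-i\lambda t}}{(u+it)^{1+k}}\,.
\]
Because $k>0$, the function $t \mapsto |u+it|^{-(1+k)}$ is integrable, so this function lies in $L_1(\R)$ and $\mathcal{F}_{L_1(\R)}$ acts on it in the ordinary Lebesgue sense.

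The heart of the matter is the identity
\[
\int_\R \frac{e^{ist}}{(u+it)^{1+k}}\,dt = \frac{2\pi}{\Gamma(k+1)}\, s^k e^{-su} \quad \text{for } s>0,
\]
the left side being $0$ when $s \le 0$. To establish it I would start from the Gamma integral representation $(u+it)^{-(1+k)} = \frac{1}{\Gamma(k+1)}\int_0^\infty \sigma^k e^{-\sigma(u+it)}\,d\sigma$, valid since $\re(u+it)=u>0$ and reducible to the definition of $\Gamma$ by rotating the integration ray onto the positive real axis. This exhibits $(u+i\pmb{\cdot})^{-(1+k)}$ as $\frac{1}{\Gamma(k+1)}$ times the Fourier transform of $\varphi(\sigma):=\sigma^k e^{-\sigma u}$ on $(0,\infty)$; both $\varphi$ and its transform lie in $L_1(\R)$ (again using $k>0$ for the latter), and $\varphi$ is continuous, so Fourier inversion returns $\varphi$, which is precisely the claimed identity.

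It remains to combine the pieces. Applying the identity with $s = x-\lambda$ and multiplying by $e^{-u\lambda}$, the Poisson factor $e^{-u\lambda}$ exactly cancels the factor $e^{u\lambda}$ hidden inside $e^{-(x-\lambda)u}$, so that
\[
\mathcal{F}_{L_1(\R)}\!\left(\frac{g\ast P_u}{(u+i\pmb{\cdot})^{1+k}}\right)(-x)
= \frac{2\pi}{\Gamma(k+1)}\,e^{-ux}(x-\lambda)^k
\]
for $x>\lambda$ and $0$ otherwise; since $x > \lambda \ge 0$ on the support, $e^{-ux} = e^{-u|x|}$. Multiplying by $\Gamma(k+1)/(2\pi)$ and summing the finitely many terms of $g=\sum_{n=1}^N a_n e^{-i\lambda_n \pmb{\cdot}}$ yields the assertion for every $x \in \R$. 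The main obstacle is the Perron-type identity of the second paragraph: one has to push the Gamma integral through Fubini and invoke Fourier inversion cleanly, and — more a matter of bookkeeping than of depth — track the Fourier convention carefully so that the exponential produced by Perron's formula merges with the Poisson factor $e^{-u\lambda}$ into $e^{-u|x|}$ (with $|x|$, not $\lambda$) in front.
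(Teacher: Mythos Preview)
Your argument is correct and follows essentially the same route as the paper: reduce by linearity to a single exponential, compute the Poisson convolution, and invoke the Perron--type integral identity. The only difference is that the paper quotes the identity
\[
\frac{\Gamma(k+1)}{2\pi i}\int_{\alpha-i\infty}^{\alpha+i\infty}\frac{e^{ys}}{s^{1+k}}\,ds
=\begin{cases} y^{k},& y\ge 0,\\ 0,& y<0,\end{cases}
\]
from Hardy--Riesz and then substitutes $s=u+it$, whereas you derive the equivalent real-variable form $\int_\R e^{ist}(u+it)^{-(1+k)}\,dt=\tfrac{2\pi}{\Gamma(k+1)}s^k e^{-su}$ via the Gamma integral representation and Fourier inversion; your version is slightly more self-contained, but there is no substantive difference.
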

\begin{proof} From \cite[Lemma 10, p. 50]{HardyRiesz} we have that
for all $\alpha>0$ and $k>0$
\begin{equation}\label{geniusHelson}
\frac{\Gamma(k+1)}{2\pi i}\int_{\alpha-i\infty}^{\alpha+i\infty} \frac{e^{ys}}{s^{1+k}} ds = \begin{cases} y^{k}&, \text{if } y\ge 0,\\ 0 &, \text{if } y<0, \end{cases}
\end{equation}
By linearity it suffices to prove the claim for $g(t)=e^{-\lambda_{n}it}$ for some $n$. Then $g*P_{u}(t)=e^{-(u+it)\lambda_{n}}$ and  we obtain
\begin{align*}
&\frac{\Gamma(k+1)}{2\pi} \mathcal{F}_{L_{1}(\R)}\bigg(\frac{g*P_{u}}{(u+i\pmb{\cdot})^{1+k}}\bigg)(-x)=\frac{\Gamma(k+1)}{2\pi} \int_{\R} \frac{e^{-(u+it)\lambda_{n}}}{(u+it)^{1+k}} e^{xit} dt \\ &=\frac{\Gamma(k+1)}{2\pi} e^{-xu} \int_{\R} \frac{e^{(x-\lambda_{n})(u+it)}}{(u+it)^{1+k}} dt= \frac{\Gamma(k+1)}{2\pi  i} \int_{u-i\infty}^{u+\infty}\frac{e^{(x-\lambda_{n})s}}{s^{1+k}} ds,
\end{align*}
which by (\ref{geniusHelson}) with $\alpha=u$ equals $(x-\lambda_{n})^{k}$, whenever
 $x>\lambda_{n}$, and else vanishes.
\end{proof}

\begin{Lemm} \label{transformpolyHelson} Let $g=\sum_{n=1}^{N}a_{n}e^{-i\lambda_{n} \pmb{\cdot}}$ and $0<\varepsilon\le 1$. Then for almost every $x\in \R$
$$\mathcal{F}_{L_{1+\varepsilon}(\R)}\bigg(\frac{g*P_{u}}{u+i\pmb{\cdot}}\bigg)(-x)=e^{-u|x|} \sum_{\lambda_{n}<x} a_{n}.$$
\end{Lemm}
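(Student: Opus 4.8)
The plan is to obtain Lemma~\ref{transformpolyHelson} from Lemma~\ref{kpositiveHelson} by letting the parameter $k$ tend to $0^{+}$, as announced just before Lemma~\ref{kpositiveHelson}. Fix $g=\sum_{n=1}^{N}a_{n}e^{-i\lambda_{n}\pmb{\cdot}}$, $u>0$ and $0<\varepsilon\le 1$, write $q$ for the conjugate exponent of $1+\varepsilon$, and put
\[
\varphi_{k}:=\frac{g\ast P_{u}}{(u+i\pmb{\cdot})^{1+k}}\qquad(k\ge 0),
\]
so that $\varphi_{0}=\frac{g\ast P_{u}}{u+i\pmb{\cdot}}$ is the function whose Fourier transform we want. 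Since $g$ is a polynomial, $g\ast P_{u}(t)=\sum_{n=1}^{N}a_{n}e^{-(u+it)\lambda_{n}}$ is bounded, and $|u+it|^{-1}$ decays like $|t|^{-1}$; hence the functions $|\varphi_{k}|$, $k\in[0,1]$, are all majorised by one fixed function which lies in $L_{1+\varepsilon}(\R)$ (but in general not in $L_{1}(\R)$, which is exactly why one cannot remain in $L_{1}$ at $k=0$), and for $k>0$ each $\varphi_{k}$ additionally belongs to $L_{1}(\R)\cap L_{1+\varepsilon}(\R)$.

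For $k>0$, Lemma~\ref{kpositiveHelson} therefore applies to $\varphi_{k}$, and since $\mathcal{F}_{L_{1}(\R)}$ and $\mathcal{F}_{L_{1+\varepsilon}(\R)}$ agree on $L_{1}(\R)\cap L_{1+\varepsilon}(\R)$, it yields
\[
\mathcal{F}_{L_{1+\varepsilon}(\R)}(\varphi_{k})(-x)=\frac{2\pi}{\Gamma(k+1)}\,e^{-u|x|}\sum_{\lambda_{n}<x}a_{n}(x-\lambda_{n})^{k}
\]
for almost every $x$. Now I pass to the limit $k\to 0^{+}$, measured in $L_{q}(\R)$. On the left, $\varphi_{k}\to\varphi_{0}$ in $L_{1+\varepsilon}(\R)$ by dominated convergence, and as $1<1+\varepsilon\le 2$ the Hausdorff--Young inequality makes $\mathcal{F}_{L_{1+\varepsilon}(\R)}\colon L_{1+\varepsilon}(\R)\to L_{q}(\R)$ bounded, so $\mathcal{F}_{L_{1+\varepsilon}(\R)}(\varphi_{k})\to\mathcal{F}_{L_{1+\varepsilon}(\R)}(\varphi_{0})$ in $L_{q}(\R)$. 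On the right, for every $x\notin\{\lambda_{1},\dots,\lambda_{N}\}$ one has $\Gamma(k+1)\to 1$ and $(x-\lambda_{n})^{k}\to 1$ (since $x>\lambda_{n}$), so the right-hand side converges pointwise almost everywhere to $e^{-u|x|}\sum_{\lambda_{n}<x}a_{n}$, and being dominated by a fixed multiple of $e^{-u|\pmb{\cdot}|}\in L_{q}(\R)$ it converges to the same limit in $L_{q}(\R)$. Since the two $k$-families coincide for each $k>0$, their $L_{q}(\R)$-limits coincide almost everywhere, which is the assertion.

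The argument is essentially routine, so I expect two minor points of care rather than a real obstacle. First, it is the hypothesis that $g$ is a \emph{polynomial} that makes $g\ast P_{u}$ bounded and thereby produces a $k$-uniform $L_{1+\varepsilon}$-majorant for the $\varphi_{k}$; this is the only place that hypothesis is used, and it explains why the general case in Proposition~\ref{perronHelson} will require an additional approximation step. Second, because the Fourier transform here is only continuous as a map into $L_{q}(\R)$, the identity is obtained in $L_{q}(\R)$, hence almost everywhere, rather than for every individual $x$, in line with the wording of the statement. The remaining bookkeeping is simply to carry the constants $\frac{2\pi}{\Gamma(k+1)}$ consistently with the normalisation of $\mathcal{F}$ fixed in Lemma~\ref{kpositiveHelson}.
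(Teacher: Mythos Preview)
Your proof is correct and follows exactly the same route as the paper: both argue that $\varphi_k\to\varphi_0$ in $L_{1+\varepsilon}(\R)$ by dominated convergence, invoke the $L_{1+\varepsilon}\to L_q$ continuity of the Fourier transform (Hausdorff--Young), and identify the $L_q$-limit of the right-hand side from Lemma~\ref{kpositiveHelson} as $k\to 0^+$. Your write-up is in fact more careful than the paper's about the agreement of $\mathcal{F}_{L_1}$ and $\mathcal{F}_{L_{1+\varepsilon}}$ on the intersection and about the normalising constant $2\pi/\Gamma(k+1)$; the only imprecision is that the right-hand side is dominated by a fixed multiple of $e^{-u|\pmb{\cdot}|}(1+|\pmb{\cdot}|)$ rather than of $e^{-u|\pmb{\cdot}|}$ itself, which of course is still in $L_q(\R)$.
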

\begin{proof}
Observe that $\frac{g*P_{u}}{u+i\pmb{\cdot}} \in L_{1+\varepsilon}(\R)$ and $\frac{g*P_{u}}{(u+i\pmb{\cdot})^{1+k}} \in L_{p}(\R)$ for all $k>0$ and $p\ge 1$. The dominated convergence theorem implies $\lim_{k\to 0}\frac{g*P_{u}}{(u+i\pmb{\cdot})^{1+k}}=\frac{g*P_{u}}{u+i\pmb{\cdot}}$ in $L_{1+\varepsilon}(\R)$.
Now by continuity of the Fourier transform and  Lemma \ref{kpositiveHelson}
\begin{align*}
&\mathcal{F}_{L_{1+\varepsilon}(\R)}\bigg(\frac{g*P_{u}}{u+i\pmb{\cdot}}\bigg)=\lim_{k\to \infty}\mathcal{F}_{L_{1}(\R)}\bigg(\frac{g*P_{u}}{(u+i\pmb{\cdot})^{1+k}}\bigg)=\lim_{k\to \infty}\mathcal{F}_{L_{1}(\R)}\bigg(\frac{g*P_{u}}{(u+i\pmb{\cdot})^{1+k}}\bigg)\\ &= C(k)\lim_{k\to 0}   e^{-u|\pmb{\cdot}|} \sum_{\lambda_{n}<\pmb{\cdot}} a_{n}(\pmb{\cdot}-\lambda_{n})^{k}=C(k) e^{-u|\pmb{\cdot}|} \sum_{\lambda_{n}<\pmb{\cdot}} a_{n},
\end{align*}
with $C(k)=\frac{2\pi}{\Gamma(k+1)}$ and convergence in $L_{q}(\R)$, where $\frac{1}{1+\varepsilon}+\frac{1}{q}=1$.
\end{proof}

\begin{proof}[Proof of Proposition \ref{perronHelson}]
Let $(P^{n})$ be a sequence of polynomials from $H_{1}^{\lambda}(G)$ converging to $f$ (see \cite[Proposition 3.14]{DefantSchoolmann2}). Then $\lim_{n\to \infty} \Psi(P^{n})=\Psi(f)$ by Proposition \ref{continuityHelson} and so there is a subsequence $(n_{k})$ such that $\lim_{k\to \infty} \frac{P^{n_{k}}_{\omega}*P_{u}}{u+i\pmb{\cdot}}=\frac{f_{\omega}*P_{u}}{u+i\pmb{\cdot}}$ in $L_{1+\varepsilon}(\R)$ for almost all $\omega \in G$. Hence by continuity of the Fourier transform and Lemma \ref{transformpolyHelson}
\begin{align*}
&\mathcal{F}_{L_{1+\varepsilon}(\R)}\bigg(\frac{f_{\omega}*P_{u}}{u+i\pmb{\cdot}}\bigg)=\lim_{k\to \infty} \mathcal{F}_{L_{1+\varepsilon}(\R)}\bigg(\frac{P^{n_{k}}_{\omega}*P_{u}}{u+i\pmb{\cdot}}\bigg)\\ & =\lim_{k\to \infty} e^{-u|\pmb{\cdot}|} \sum_{\lambda_{n}<\pmb{\cdot}} \widehat{P^{n_{k}}}(h_{\lambda_{n}}) h_{\lambda_{n}}(\omega)=e^{-u|\pmb{\cdot}|} \sum_{\lambda_{n}<\pmb{\cdot}} \widehat{f}(h_{\lambda_{n}}) h_{\lambda_{n}}(\omega). \qedhere
\end{align*}
\end{proof}

\section{\bf Applications} \label{bohrstheoremsectionHelson}

In this final section we give several  applications of the results of the preceding sections.

\subsection{Bohr's theorem and its equivalent formulations}
Suppose that $D=\sum a_{n}e^{-\lambda_{n}s}$ converges somewhere and that its limit function extends  to a bounded and holomorphic function $f$ on $[Re>0]$. Then a prominent problem from the beginning of the 20th century was to determine the class of $\lambda$'s for which under this assumption all $\lambda$-Dirichlet series converge uniformly on $[Re>\varepsilon]$ for every $\varepsilon>0$.

\medskip

{\noindent \bf Bohr's theorem.}
 We say that $\lambda$ satisfies 'Bohr's theorem'  if the answer to the preceding problem is affirmative, and  Bohr indeed proves in \cite{Bohr} that all frequencies with his property $(BC)$ belong to this class.

We denote by $\mathcal{D}^{ext}_{\infty}(\lambda)$ the space of all somewhere convergent $D\in \mathcal{D}(\lambda)$ which have a limit function extending to a bounded and holomorphic functions $f$ on   $[Re>0]$.
It is then immediate that  $\lambda$ satisfies Bohr's theorem if and only if every $D \in \mathcal{D}^{ext}_{\infty}(\lambda)$ converges uniformly on $[Re>\varepsilon]$ for every $\varepsilon>0$.

As  proven in \cite[Corollary 3.9]{Schoolmann},  the    linear space $\mathcal{D}^{ext}_{\infty}(\lambda)$
together with  $\|D\|_{\infty}=\sup_{[Re>0]}|f(s)|$ forms a normed space. The isometric subspace of all $D\in \mathcal{D}^{ext}_{\infty}(\lambda)$, which converge on $[Re>0]$, is  denoted by $\mathcal{D}_{\infty}(\lambda)$.
Note that $\mathcal{D}_{\infty}(\lambda)=\mathcal{D}_{\infty}^{ext}(\lambda)$, whenever Bohr's theorem holds for $\lambda$.

Later in \cite{Landau} Landau improves Bohr's result showing  that the weaker condition $(LC)$ is sufficient for Bohr's theorem.
More generally, we know from  \cite[Remark 4.8.]{Schoolmann} that Bohr's theorem holds for $\lambda$ in each of the following 'testable' cases:

  \begin{itemize}
  \item
   $\lambda$ is $\mathbb{Q}$-linearly independent,
   \item
$L(\lambda):=\limsup_{n\to \infty} \frac{\log n}{\lambda_{n}}=0$,
\item
 $\lambda$ fulfills (LC) (and  in particular, if it fulfills (BC)).
 \end{itemize}
 In particular, the frequency $\lambda =(\log n)$ satisfies Bohr's theorem which constitutes  one of the   fundamental tools within the theory of ordinary Dirichlet series $\sum a_{n}n^{-s}$ (see e.g.  \cite[Theorem 1.13, p. 21]{Defant} or \cite[Theorem 6.2.2., p. 143]{QQ}).

\medskip

{\noindent \bf Completeness.}
In general, $\mathcal{D}_{\infty}(\lambda)$ as well as $\mathcal{D}^{ext}_{\infty}(\lambda)$ may fail to be complete. See \cite[Theorem 5.2]{Schoolmann} for generic example of such  $\lambda$'s.
Let us recall \cite[Theorem 5.1]{Schoolmann}, where we prove that  $\mathcal{D}_{\infty}(\lambda)$ (and consequently also $\mathcal{D}^{ext}_{\infty}(\lambda)$, see Theorem~\ref{equivalenceHelson})  is complete under each of the following concrete  conditions:

  \begin{itemize}
  \item
   $\lambda$ is $\mathbb{Q}$-linearly independent,
   \item
$L(\lambda)=0$,

\item
 $\lambda$ fulfills (LC) and $L(\lambda)<\infty$ (and  in particular, if it fulfills (BC)).
 \end{itemize}

\medskip

{\noindent \bf Coincidence.}
From \cite[Section 2.5]{DefantSchoolmann3} we know that for any $\lambda$ there is an isometric linear map
\begin{equation} \label{isometricembeddingAHelson}
\mathcal{A} \colon \mathcal{D}^{ext}_{\infty}(\lambda) \hookrightarrow H_{\infty}^{\lambda}(G),~~ D\mapsto f
\end{equation}
such that $a_{n}(D)=\widehat{f}(h_{\lambda_{n}})$ for all $n$.
Hence $\mathcal{D}_{\infty}^{ext}(\lambda)$, and so also $\mathcal{D}_{\infty}(\lambda)$, actually are isometric subspaces of $\mathcal{H}_{\infty}(\lambda)$.

Clearly, if $\mathcal{D}_{\infty}(\lambda)$ or $\mathcal{D}^{ext}_{\infty}(\lambda)$ are not complete,
 then $\mathcal{D}_{\infty}^{ext}(\lambda) \varsubsetneq \mathcal{H}_{\infty}(\lambda)$ or $\mathcal{D}_{\infty}(\lambda)  \varsubsetneq \mathcal{H}_{\infty}(\lambda)$, respectively. On the other hand, in the case of the two most prominent examples $\lambda = (n)$ and $\lambda = (\log n)$  we have 'coincidence':
\[
\text{$\mathcal{D}_{\infty}((n))=\mathcal{H}_{\infty}((n))$ and $\mathcal{D}_{\infty}((\log n))=\mathcal{H}_{\infty}((\log n))$;}
\]
the first result is straight forward, the second one a fundamental result from  \cite{HLS} (see also \cite[Corollary 5.3]{Defant}). More generally,
\cite[Theorem 4.12]{DefantSchoolmann2} shows  that we have the isometric 'coincidence'  $\mathcal{D}_{\infty}(\lambda) =  \mathcal{H}_{\infty}(\lambda)$
holds, whenever
  \begin{itemize}
  \item \text{$L(\lambda) < \infty$ and $\mathcal{D}^{ext}_{\infty}(\lambda)=  \mathcal{D}_{\infty}(\lambda)$ (so if e.g. $\lambda$ satisfies Bohr's theorem).}
    \end{itemize}

\medskip

We come to the main point of this subsection -- Bohr's theorem, completeness, and coincidence generate the same class of frequencies.

\begin{Theo} \label{equivalenceHelson} Let $\lambda$ be an arbitrary frequency.
Then the following are equivalent:
  \begin{itemize}
  \item[(a)]
   $\lambda$ satisfies Bohr's theorem,
   \item[(b)]
$\mathcal{D}_{\infty}(\lambda)$ is complete,
\item[(c)]
 $\mathcal{D}_{\infty}(\lambda) =  \mathcal{H}_{\infty}(\lambda)$ isometrically.
 \end{itemize}
 \end{Theo}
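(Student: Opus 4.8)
The plan is to establish the cycle $(a)\Rightarrow(c)\Rightarrow(b)\Rightarrow(a)$. Here $(c)\Rightarrow(b)$ is immediate, since $\mathcal{H}_\infty(\lambda)$ is a Banach space and hence so is any space isometrically identified with it. The implication $(a)\Rightarrow(c)$ carries the main weight and is where the reflexive Helson-type result Theorem~\ref{DirichletintegerHelson} enters, while $(b)\Rightarrow(a)$ is the genuinely delicate one.

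For $(a)\Rightarrow(c)$: since $\mathcal{D}_\infty(\lambda)$ sits inside $\mathcal{H}_\infty(\lambda)$ as an isometric subspace, it suffices to show that every $D=\sum a_n e^{-\lambda_n s}\in\mathcal{H}_\infty(\lambda)$ already belongs to $\mathcal{D}_\infty(\lambda)$. Let $f\in H_\infty^\lambda(G)$ be the function associated to $D$; as $G$ is a probability space, $f\in H_2^\lambda(G)$, so $D\in\mathcal{H}_2(\lambda)$, and Theorem~\ref{DirichletintegerHelson} produces some $\omega_0\in G$ for which $D^{\omega_0}$ converges on $[Re>0]$ to a function which, being of the form $(f_{\omega_0}\ast P_u)(t)$ at $s=u+it$, is bounded by $\|f\|_\infty$; hence $D^{\omega_0}\in\mathcal{D}^{ext}_\infty(\lambda)$. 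Now invoke $(a)$: the series $D^{\omega_0}$ converges uniformly on $[Re>\varepsilon]$ for every $\varepsilon>0$, and since the supremum of a tail of a $\lambda$-Dirichlet series over a vertical half-plane is unchanged under vertical translations, every vertical translate $(D^{\omega_0})_{i\tau}$ converges uniformly on $[Re>\varepsilon]$ with one and the same modulus of convergence (independent of $\tau$). Using that $\beta(\mathbb{R})$ is dense in $G$, pick $(\tau_k)\subset\mathbb{R}$ with $\beta(\tau_k)\to\omega_0^{-1}$; then the $n$-th coefficient of $(D^{\omega_0})_{i\tau_k}$ equals $a_n h_{\lambda_n}(\omega_0\beta(\tau_k))$, which tends to $a_n h_{\lambda_n}(\omega_0\omega_0^{-1})=a_n$ as $k\to\infty$, and a routine uniform-Cauchy estimate on the tails transfers the common modulus of convergence to $D$ itself. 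Thus $D$ converges uniformly on every $[Re>\varepsilon]$; in particular it converges on $[Re>0]$ with limit function bounded by $\|f\|_\infty$, so $D\in\mathcal{D}_\infty(\lambda)$. As $D$ was arbitrary, $\mathcal{H}_\infty(\lambda)\subseteq\mathcal{D}_\infty(\lambda)$ as sets of Dirichlet series, and the reverse inclusion together with the isometry of the embedding yields $(c)$.

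The implication $(b)\Rightarrow(a)$ is the main obstacle. Completeness of $\mathcal{D}_\infty(\lambda)$ means exactly that it is closed in $\mathcal{H}_\infty(\lambda)$, and from this we must deduce that every $D\in\mathcal{D}^{ext}_\infty(\lambda)$ converges uniformly on each $[Re>\varepsilon]$. I would argue by contraposition: if Bohr's theorem fails then, by the equivalence $(a)\Leftrightarrow(c)$ just obtained, $\mathcal{D}_\infty(\lambda)\subsetneq\mathcal{H}_\infty(\lambda)$, and one must exhibit a sequence in $\mathcal{D}_\infty(\lambda)$ that is Cauchy in $\mathcal{H}_\infty(\lambda)$ but whose limit lies outside $\mathcal{D}_\infty(\lambda)$. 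The obvious candidates — partial sums, Riesz means, or horizontal and vertical translates of an element of $\mathcal{H}_\infty(\lambda)\setminus\mathcal{D}_\infty(\lambda)$ — either fail to be $\|\cdot\|_\infty$-Cauchy or converge back into $\mathcal{D}_\infty(\lambda)$, so a finer construction is required. The route I would follow combines Theorem~\ref{DirichletintegerHelson} — which shows that for every $D\in\mathcal{H}_\infty(\lambda)$ almost all vertical limits $D^\omega$ converge on $[Re>0]$, hence lie in $\mathcal{D}_\infty(\lambda)$ — with the characterization, available from \cite{DefantSchoolmann2}, that $D$ converges uniformly on every $[Re>\varepsilon]$ if and only if \emph{all} of its vertical limits converge on $[Re>0]$. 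The point would be that completeness of $\mathcal{D}_\infty(\lambda)$ forces the ``almost all $\omega$'' in Theorem~\ref{DirichletintegerHelson} to become ``all $\omega$'', for instance via a measurability argument for the orbit map $\omega\mapsto D^\omega$ into the quotient Banach space $\mathcal{H}_\infty(\lambda)/\mathcal{D}_\infty(\lambda)$, or by adapting the explicit constructions behind the generic incompleteness examples of \cite[Theorem~5.2]{Schoolmann}. Making this final step rigorous is where I expect the real work to lie.
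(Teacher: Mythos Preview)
Your implication $(a)\Rightarrow(c)$ is essentially correct and close to the paper's argument, though you over-complicate it: once you have that the tails $\sum_{n=N}^{M} a_n h_{\lambda_n}(\omega_0)e^{-\lambda_n s}$ are uniformly small on $[Re>\varepsilon]$, the norm invariance of Dirichlet polynomials under vertical limits (which you yourself invoke) gives the same for $\sum_{n=N}^{M} a_n e^{-\lambda_n s}$ directly, so the density argument via $\beta(\tau_k)\to\omega_0^{-1}$ is redundant. The implication $(c)\Rightarrow(b)$ is of course trivial.

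The genuine gap is $(b)\Rightarrow(a)$, and you are right that this is the delicate step --- but your proposed route (contraposition via a measurability argument on orbit maps, or mining the incompleteness examples) is not how it goes, and you yourself admit the argument is incomplete. The paper does \emph{not} attack $(b)\Rightarrow(a)$ directly. Instead it proves $(b)\Rightarrow(c)$ and, separately, $(c)\Rightarrow(a)$. For $(b)\Rightarrow(c)$: given $D\in\mathcal{H}_\infty(\lambda)$, take $\omega$ with $D^\omega\in\mathcal{D}_\infty(\lambda)$ (as you do), and then use first Riesz means $R_x(D^\omega_\varepsilon)=\sum_{\lambda_n<x}a_n h_{\lambda_n}(\omega)e^{-\varepsilon\lambda_n}(1-\lambda_n/x)e^{-\lambda_n s}$; these \emph{do} converge to $D^\omega_\varepsilon$ in $\mathcal{D}_\infty(\lambda)$ by \cite[Proposition~3.4]{Schoolmann}, contrary to your dismissal of Riesz means as ``obvious candidates'' that ``fail to be $\|\cdot\|_\infty$-Cauchy''. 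Norm invariance under vertical limits then makes $(R_x(D_\varepsilon))_x$ Cauchy in $\mathcal{D}_\infty(\lambda)$, and completeness $(b)$ gives $D_\varepsilon\in\mathcal{D}_\infty(\lambda)$, hence $D\in\mathcal{D}_\infty(\lambda)$.

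For $(c)\Rightarrow(a)$ the key tool you are missing is an elementary application of the uniform boundedness principle: since $\mathcal{D}_\infty(\lambda)$ is complete by $(c)$, the functionals $T_N(D)=\sum_{n=1}^N a_n(D)e^{-\varepsilon\lambda_n}$ are pointwise bounded on $\mathcal{D}_\infty(\lambda)$ (they converge to $D(\varepsilon)$), hence uniformly bounded, which after translating by $z\in[Re>0]$ yields $\sup_N\|\sum_{n=1}^N a_n e^{-\varepsilon\lambda_n}e^{-\lambda_n s}\|_\infty\le C(\varepsilon)\|D\|_\infty$. Combined with the Bohr--Cahen inequality $\sigma_u(D)\le\limsup_N \lambda_N^{-1}\log\sup_t|\sum_{n=1}^N a_n e^{-it\lambda_n}|$, this gives $\sigma_u(D_\varepsilon)\le 0$ for every $\varepsilon>0$, i.e.\ $\sigma_u(D)\le 0$, which is Bohr's theorem. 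Neither of these two ingredients appears in your proposal.
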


Note that  each of the equivalent  statements (a), (b), and (c) of  Theorem \ref{equivalenceHelson}  trivially  implies that $\mathcal{D}_{\infty}(\lambda)=\mathcal{D}^{ext}_{\infty}(\lambda) =  \mathcal{H}_{\infty}(\lambda)$
(look at (c) and \eqref{isometricembeddingAHelson}), and hence
 in this case   $\mathcal{D}^{ext}_{\infty}(\lambda)$ is complete. But we do not now whether in general  completeness of $\mathcal{D}^{ext}_{\infty}(\lambda)$
 implies completeness  of $\mathcal{D}_{\infty}(\lambda)$, which would allow to replace $\mathcal{D}_{\infty}(\lambda)$ in  Theorem \ref{equivalenceHelson}  by $\mathcal{D}^{ext}_{\infty}(\lambda)$. In this context we like to mention, that an example of Neder from \cite{Neder} shows, that in general $D_{\infty}(\lambda)$ is not a closed subspace of $\mathcal{D}_{\infty}^{ext}(\lambda)$.

For the proof of Theorem~\ref{equivalenceHelson} we need some preparation, and start with the following simple consequence of the principle of uniform boundedness.

\begin{Lemm} \label{previousHelson}Assume that $\mathcal{D}_{\infty}(\lambda)$ is complete, and $\varepsilon>0$. Then there is a constant $C=C(\varepsilon)$ such that for all $D \in \mathcal{D}_{\infty}(\lambda)$
\begin{equation*}
\sup_{N}\big\| \sum_{n=1}^{N} a_{n}(D)e^{-\varepsilon\lambda_{n}}e^{-\lambda_{n}s}\big\|_{\infty} \le C\|D\|_{\infty}.
\end{equation*}
\end{Lemm}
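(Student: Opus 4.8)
The plan is to invoke the uniform boundedness principle for the sequence of damped partial--sum operators on the Banach space $\mathcal{D}_{\infty}(\lambda)$. For $N\in\N$ set
\[
T_{N}D:=\sum_{n=1}^{N}a_{n}(D)\,e^{-\varepsilon\lambda_{n}}e^{-\lambda_{n}s}\,,
\]
which is a Dirichlet polynomial; it converges everywhere and is bounded on $[\re>0]$, so $T_{N}D\in\mathcal{D}_{\infty}(\lambda)$ and $T_{N}\colon\mathcal{D}_{\infty}(\lambda)\to\mathcal{D}_{\infty}(\lambda)$ is linear. First I would record the two routine ingredients of Banach--Steinhaus. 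The space $\mathcal{D}_{\infty}(\lambda)$ is normed (by \cite[Corollary 3.9]{Schoolmann}) and complete by hypothesis, hence Banach; and each $T_{N}$ is bounded, because the coefficient functionals $D\mapsto a_{n}(D)$ are bounded on $\mathcal{D}_{\infty}(\lambda)$ --- composing the isometric embedding $\mathcal{A}\colon\mathcal{D}^{ext}_{\infty}(\lambda)\hookrightarrow H^{\lambda}_{\infty}(G)$ of \eqref{isometricembeddingAHelson} with $H^{\lambda}_{\infty}(G)\hookrightarrow L_{1}(G)$ gives $|a_{n}(D)|=|\widehat{\mathcal{A}D}(h_{\lambda_{n}})|\le\|\mathcal{A}D\|_{L_{1}(G)}\le\|D\|_{\infty}$, so $\|T_{N}D\|_{\infty}\le\big(\sum_{n\le N}e^{-\varepsilon\lambda_{n}}\big)\|D\|_{\infty}$. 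Granting these, the uniform boundedness principle reduces the assertion $\sup_{N}\|T_{N}\|<\infty$ to the pointwise statement that $\sup_{N}\|T_{N}D\|_{\infty}<\infty$ for every fixed $D\in\mathcal{D}_{\infty}(\lambda)$.

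Proving this pointwise bound is the heart of the matter, and I expect it to be the main obstacle. Note that $T_{N}D$ is exactly the $N$-th partial sum of the horizontal translate $D_{\varepsilon}$, and that, being a Dirichlet polynomial, $T_{N}D$ is bounded and holomorphic on $[\re>0]$, so $\|T_{N}D\|_{\infty}=\sup_{t\in\R}|T_{N}D(it)|$; thus the claim is that the partial sums of $D_{\varepsilon}$ are uniformly bounded on the closed half--plane $[\re\ge 0]$. The reason this should be available is that $D_{\varepsilon}$ has room to the left: since $D$ converges on $[\re>0]$ with sum $f$ bounded there, $D_{\varepsilon}$ converges on the strictly larger half--plane $[\re>-\varepsilon]$ and its sum $f(\varepsilon+\cdot)$ is bounded there by $\|D\|_{\infty}$. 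I would make this quantitative through the Riesz--mean (Perron) representation: writing $(1-\lambda_{n}/x)^{k}$ as a vertical contour integral as in \cite[Lemma 10, p. 50]{HardyRiesz} (the identity \eqref{geniusHelson}), one obtains for $k>0$, $x>0$, $\re s\ge 0$ and any $\alpha>0$
\[
R^{\lambda,k}_{x}(D_{\varepsilon})(s)=\frac{\Gamma(k+1)}{2\pi i\,x^{k}}\int_{(\alpha)}\frac{e^{xw}}{w^{k+1}}\,f(\varepsilon+s+w)\,dw\,;
\]
taking $\alpha=k/x$ turns the scalar factor $e^{x\alpha}/x^{k}$ into a constant depending only on $k$, and since $|f(\varepsilon+s+w)|\le\|D\|_{\infty}$ on the contour this yields $\sup_{x>0}\sup_{\re s\ge 0}|R^{\lambda,k}_{x}(D_{\varepsilon})(s)|\le C(k)\|D\|_{\infty}$. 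One then passes from Riesz means to ordinary partial sums through the comparison estimate $\big|\sum_{n\le N}c_{n}e^{-\lambda_{n}s}\big|\le 3\big(\tfrac{\lambda_{N+1}}{\lambda_{N+1}-\lambda_{N}}\big)^{k_{N}}\sup_{0<x<\lambda_{N+1}}|R^{\lambda,k_{N}}_{x}(\cdot)(s)|$ of \cite[Lemma 3.5]{Schoolmann}, for a suitable choice of orders $(k_{N})\subset(0,1]$.

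The delicate point --- and where I expect completeness of $\mathcal{D}_{\infty}(\lambda)$ to enter essentially --- is the $\lambda$-dependent factor $(\lambda_{N+1}/(\lambda_{N+1}-\lambda_{N}))^{k_{N}}$ in this last step: it need not stay bounded in $N$ for an arbitrary frequency, and it is precisely for frequencies with very rapidly shrinking gaps that $\mathcal{D}_{\infty}(\lambda)$ fails to be complete (cf.\ \cite[Theorem 5.2]{Schoolmann}). So rather than trying to control that factor directly from properties of $\lambda$, the argument should leverage completeness itself --- for instance by applying Banach--Steinhaus first to the (uniformly bounded) Riesz--mean operators on $\mathcal{D}_{\infty}(\lambda)$, or by using that completeness makes the Dirichlet polynomials lie densely enough in $\mathcal{D}_{\infty}(\lambda)$ for the pointwise bound, which is trivial on polynomials, to propagate to all of $\mathcal{D}_{\infty}(\lambda)$. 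Once $\sup_{N}\|T_{N}D\|_{\infty}<\infty$ has been secured for every $D$, Banach--Steinhaus produces the constant $C=C(\varepsilon)$ and the lemma follows.
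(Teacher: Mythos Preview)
Your overall strategy --- Banach--Steinhaus on $\mathcal{D}_{\infty}(\lambda)$ --- is exactly right, and your bookkeeping (continuity of coefficient functionals, etc.) is fine. The gap is in the ``heart of the matter'': you set up the $T_{N}$ as operators into $\mathcal{D}_{\infty}(\lambda)$, which forces you to verify the \emph{pointwise} bound
\[
\sup_{N}\sup_{t\in\R}\Big|\sum_{n=1}^{N}a_{n}(D)e^{-\varepsilon\lambda_{n}}e^{-it\lambda_{n}}\Big|<\infty
\]
for every single $D$. That statement is essentially as hard as the lemma itself; your Riesz--mean route runs into the uncontrolled factor $(\lambda_{N+1}/(\lambda_{N+1}-\lambda_{N}))^{k_{N}}$, and neither of the two fixes you sketch at the end actually closes the argument. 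Applying Banach--Steinhaus to the Riesz--mean operators yields nothing new (their uniform bound $C(k)$ holds already without completeness), and ``density of polynomials'' does not propagate a pointwise finiteness statement that carries no uniform constant.

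The paper avoids this difficulty by a small but decisive change of viewpoint: it takes the $T_{N}$ to be \emph{scalar} functionals,
\[
T_{N}(D)=\sum_{n=1}^{N}a_{n}(D)e^{-\varepsilon\lambda_{n}}\in\C,
\]
i.e.\ evaluation of your polynomial at $s=0$. For these the pointwise bound is free: since $D\in\mathcal{D}_{\infty}(\lambda)$ converges on $[\re>0]$, in particular at $s=\varepsilon$, one has $\lim_{N}T_{N}(D)=D(\varepsilon)$, hence $\sup_{N}|T_{N}(D)|<\infty$. Banach--Steinhaus (here is where completeness enters) then gives $\sup_{N}|T_{N}(D)|\le C\|D\|_{\infty}$ for all $D$. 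To recover the full $\|\cdot\|_{\infty}$--estimate one finally applies this scalar inequality to every horizontal translate $D_{z}$, $z\in[\re>0]$, and uses $\|D_{z}\|_{\infty}\le\|D\|_{\infty}$. In short: first Banach--Steinhaus at a single point, then translate --- rather than attempting Banach--Steinhaus in the sup norm directly.
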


\begin{proof}
Define for every $N$
\begin{equation*}
T_{N}(D)=\sum_{n=1}^{N}a_{n}(D)e^{-\varepsilon\lambda_{n}}\colon \mathcal{D}_{\infty}(\lambda)\to \mathbb{C}.
\end{equation*}
Then $T_{N}$ is continuous and $\lim_{N}T_{N}(D)=D(\varepsilon)$ exists. Hence by the principle of uniform boundedness (here completeness of $\mathcal{D}_{\infty}(\lambda)$ is essential) there is a constant $C>0$ such that
\begin{equation*}
\sup_{N} \|T_{N}\|\le C<\infty,
\end{equation*}
that is for all $D\in \mathcal{D}_{\infty}(\lambda)$ we have
\begin{equation} \label{bananeHelson}
\sup_{N} \big| \sum_{n=1}^{N} a_{n}(D)e^{-\lambda_{n}\varepsilon} \big|\le C\|D\|_{\infty}.
\end{equation}
Now let $D\in \mathcal{D}_{\infty}(\lambda)$. Applying (\ref{bananeHelson}) to $D_{z}$, which belong to $\mathcal{D}_{\infty}(\lambda)$ for all $z\in [Re>0]$,   we obtain
\begin{equation*}
\sup_{z\in [Re>0]} \sup_{N} \big| \sum_{n=1}^{N} a_{n}e^{-\lambda_{n}z}e^{-\lambda_{n}\varepsilon} \big|\le C \sup_{z\in [Re>0]}\|D_{z}\|_{\infty}\le C\|D\|_{\infty},
\end{equation*}
which proves the claim.
\end{proof}

The  second lemma  is  crucial, and in fact a consequence of the Helson-type Theorem \ref{DirichletintegerHelson}  (compare this with \cite[Propositions 4.3 and 4.5]{DefantSchoolmann2}).

\begin{Lemm} \label{complHelson} Let $\lambda$ be an arbitrary frequency and  $D\in \mathcal{H}_{\infty}(\lambda)$. Then for every  $\lambda$-Dirichlet group  $(G,\beta)$
almost all vertical limits $D^\omega \in  \mathcal{D}_{\infty}(\lambda)$ and  $\|D^{\omega}\|_{\mathcal{D}_{\infty}(\lambda)}=\|D\|_{\mathcal{H}_{\infty}(\lambda)}$.
\end{Lemm}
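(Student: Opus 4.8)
The plan is to exploit the Helson-type result from Theorem~\ref{DirichletintegerHelson} (in its $H_p^\lambda(G)$-formulation Theorem~\ref{CorointegerHelson}) applied to a fixed $1<p<\infty$ and then pass to $p=\infty$ via the inclusion $H_\infty^\lambda(G)\subset H_p^\lambda(G)$. Concretely, let $D\in\mathcal H_\infty(\lambda)$ with associated $f\in H_\infty^\lambda(G)$, and fix some $1<p<\infty$. Since $G$ has normalized Haar measure, $f\in H_p^\lambda(G)$ with $\|f\|_p\le\|f\|_\infty$, so Theorem~\ref{DirichletintegerHelson} yields a null set $N_0\subset G$ such that for every $\omega\notin N_0$ the translated Dirichlet series $D^\omega$ converges on $[Re>0]$ and, moreover, for every $u>0$ and almost all $t\in\R$ one has $D^\omega(u+it)=(f_\omega\ast P_u)(t)$. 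The first task is then to promote ``converges on $[Re>0]$'' to membership in $\mathcal D_\infty(\lambda)$, i.e. to show the limit function of $D^\omega$ is bounded on $[Re>0]$ by $\|D\|_\infty$. For this I would use that $f\in H_\infty^\lambda(G)$ means $\|f\|_{L_\infty(G)}=\|D\|_\infty$, hence $\|f\ast p_u\|_{L_\infty(G)}\le\|f\|_\infty$ for every $u>0$; combined with the identity $(f\ast p_u)_\omega=f_\omega\ast P_u$ (valid off a null set, from \cite[Lemma 3.11]{DefantSchoolmann2} and the discussion around \eqref{besicoHelson}) and the fact that $|f\ast p_u|\le\|f\|_\infty$ almost everywhere on $G$, one deduces $\sup_{u>0}\sup_{t}|f_\omega\ast P_u(t)|\le\|f\|_\infty$ for almost all $\omega$. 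Since $D^\omega(u+it)=(f_\omega\ast P_u)(t)$ for almost all $t$ and the left side is holomorphic in $u+it$, continuity gives the bound everywhere on $[Re>0]$, so $D^\omega\in\mathcal D_\infty(\lambda)$ with $\|D^\omega\|_{\mathcal D_\infty(\lambda)}\le\|D\|_{\mathcal H_\infty(\lambda)}$ for almost all $\omega$.

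For the reverse inequality $\|D^\omega\|_{\mathcal D_\infty(\lambda)}\ge\|D\|_{\mathcal H_\infty(\lambda)}$ (for almost all $\omega$), the idea is that the map $\omega\mapsto D^\omega$ essentially covers all vertical translates, so the supremum of $\|D^\omega\|$ cannot drop below $\|f\|_\infty$. More precisely, using \eqref{besicoHelson} applied to $|f\ast p_u|^r$ for suitable $r$, or rather the fact that $t\mapsto (f\ast p_u)(\omega\beta(t))$ has the same essential sup as $f\ast p_u$ on $G$ for almost all $\omega$ (this is \cite[Lemma 3.11]{DefantSchoolmann2} together with the ergodicity-type statement that $t\mapsto\omega\beta(t)$ is equidistributed), one gets $\sup_t|f_\omega\ast P_u(t)|=\|f\ast p_u\|_{L_\infty(G)}$ for almost all $\omega$; letting $u\to 0^+$ and using $\|f\ast p_u\|_\infty\to\|f\|_\infty$ (which holds by lower semicontinuity, since $f\ast p_u\to f$ in measure and $\|f\ast p_u\|_\infty\le\|f\|_\infty$ forces $\liminf\ge\|f\|_\infty$), we conclude $\|D^\omega\|_{\mathcal D_\infty(\lambda)}=\sup_{u>0}\sup_t|D^\omega(u+it)|\ge\|f\|_\infty=\|D\|_{\mathcal H_\infty(\lambda)}$. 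Intersecting the countably many null sets arising from $u=1/k$, $k\in\N$, and $N_0$, we obtain a single null set $N$ outside of which both inequalities hold, hence equality.

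The main obstacle I anticipate is the careful bookkeeping around the almost-everywhere identities $(f\ast p_u)_\omega=f_\omega\ast P_u$ and the transfer of $L_\infty$-norms between $G$ and $\R$ via $\beta$: the statement \eqref{besicoHelson} gives averages, not suprema, so passing from ``the Besicovitch mean of $|f_\omega\ast P_u|^r$ equals $\|f\ast p_u\|_r^r$'' to a statement about essential suprema requires either letting $r\to\infty$ (and controlling the resulting null sets uniformly) or invoking the density of range of $\beta$ together with continuity of $f\ast p_u$ on $G$. The cleanest route is probably to note that $f\ast p_u\in C(G)$ for $u>0$ (Poisson smoothing of an $L_\infty$ function on a compact group is continuous), so $(f\ast p_u)_\omega(t)=(f\ast p_u)(\omega\beta(t))$ holds for \emph{every} $t$ and every $\omega$, and then the dense range of $\beta$ gives $\sup_t|(f\ast p_u)(\omega\beta(t))|=\|f\ast p_u\|_{L_\infty(G)}$ for every $\omega\in G$ at once — removing the need for any null-set argument at the $u>0$ level and isolating all the measure-theoretic subtlety in the single limit $u\to 0^+$, which is exactly where \cite[Propositions 4.3 and 4.5]{DefantSchoolmann2} already do the work.
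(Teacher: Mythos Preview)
Your proposal is correct and follows essentially the same route as the paper: use the inclusion $\mathcal{H}_\infty(\lambda)\subset\mathcal{H}_p(\lambda)$ (the paper takes $p=2$), invoke Theorem~\ref{DirichletintegerHelson} to get convergence of $D^\omega$ on $[Re>0]$ together with the identity $D^\omega(u+it)=(f_\omega\ast P_u)(t)$, and then use continuity of $f_\omega\ast P_u$ plus $\|f_\omega\ast P_u\|_\infty\le\|f\|_\infty$ to obtain $\|D^\omega\|_{\mathcal{D}_\infty(\lambda)}\le\|D\|_{\mathcal{H}_\infty(\lambda)}$.

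For the reverse inequality your argument via $\sup_t|(f\ast p_u)(\omega\beta(t))|=\|f\ast p_u\|_\infty$ and $u\to 0^+$ works, but the paper dispatches it in one line: once $D^\omega\in\mathcal{D}_\infty(\lambda)$, the isometric embedding \eqref{isometricembeddingAHelson} gives $\|D^\omega\|_{\mathcal{D}_\infty(\lambda)}=\|D^\omega\|_{\mathcal{H}_\infty(\lambda)}$, and \cite[Proposition~4.3]{DefantSchoolmann2} (rotation invariance of the $\mathcal{H}_\infty$-norm) gives $\|D^\omega\|_{\mathcal{H}_\infty(\lambda)}=\|D\|_{\mathcal{H}_\infty(\lambda)}$ for \emph{every} $\omega\in G$. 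This avoids the limit $u\to 0^+$ and any additional null-set bookkeeping.
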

\begin{proof} Let  $f \in H_{\infty}^{\lambda}(G)$ be the function associated to $D$, i.e.  $\Bcal(f)=D$. Since $\mathcal{H}_{\infty}(\lambda)\subset \mathcal{H}_{2}(\lambda)$ and the function $f_{\omega}*P_{u}$ is continuous, Theorem \ref{DirichletintegerHelson} implies that $D^{\omega}$ converges on $[Re>0]$ and $D^{\omega}(u+it)=f_{\omega}*P_{u}(t)$ for all $t\in \R$ and $u>0$.
Hence
\[\sup_{[Re>u]} |D^{\omega}(s)|=\sup_{[Re=u]} |D^{\omega}(s)|\le \|f_{\omega}*P_{u}\|_{\infty}\le \|f_{\omega}\|_{\infty} \leq   \|f\|_{\infty},\]
and so $D^{\omega}\in \mathcal{D}_{\infty}(\lambda)$ with $\|D^{\omega}\|_{\mathcal{D}_{\infty}(\lambda)}\le \|f\|_{\infty} = \|D\|_{\mathcal{H}_{\infty}(\lambda)}$.
Moreover, by  \cite[Propositions 4.3]{DefantSchoolmann2} and \eqref{isometricembeddingAHelson}  we have that $\|D\|_{\mathcal{H}_{\infty}(\lambda)}= \|D^{\omega}\|_{\mathcal{H}_{\infty}(\lambda)}
= \|D^\omega\|_{\mathcal{D}_{\infty}(\lambda)}$.
\end{proof}

The  third and final ingredient we need for the proof of Theorem~\ref{equivalenceHelson} is a 'Bohr-Cahen formula' for the abscissa of uniform convergence
for general Dirichlet series. Given a Dirichlet series $D=\sum a_{n}e^{-\lambda_{n}s}$, the abscissa $\sigma_{u}(D)$ of uniform convergence
is defined to be the infimum over all $\sigma \in \R$ such $D$ converges uniformly on $[Re > \sigma]$. The following   convenient estimate for $\sigma_{u}(D)$ is proved in \cite[Corollary 2.5]{Schoolmann}:
\begin{equation}\label{alwaysBohrcahenHelson}
\sigma_{u}(D)\le \limsup_{N\to \infty} \frac{\log\bigg( \sup_{t\in \R} \big|\sum_{n=1}^{N} a_{n}e^{-it\lambda_{n}}\big|\bigg)}{\lambda_{N}}.
\end{equation}

\begin{proof}[Proof of Theorem~\ref{equivalenceHelson}]
In a first step we prove the equivalence $(b) \Leftrightarrow (c)$: Obviously, $(c)$ implies $(b)$. So assume that $(b)$ holds, and let $D\in \mathcal{H}_{\infty}(\lambda)$. Then $D^{\omega}\in \mathcal{D}_{\infty}(\lambda)$ for some $\omega \in G$ by Lemma \ref{complHelson}. Applying \cite[Proposition 3.4, $k=1$]{Schoolmann} for every $\varepsilon>0$ the Dirichlet polynomials
\begin{equation*}
R_{x}(D^{\omega}_{\varepsilon})=\sum_{\lambda_{n}<x} a_{n}(D)e^{-\varepsilon\lambda_{n}} h_{\lambda_{n}}(\omega) \bigg(1-\frac{\lambda_{n}}{x}\bigg) e^{-\lambda_{n}s}
\end{equation*}
converge uniformly to $D^{\omega}$ on $[Re>0]$. Hence, by \cite[Corollary 4.4]{DefantSchoolmann2} (Dirichlet polynomials
in $\mathcal{D}_\infty(\lambda)$ and their vertical limits have the same norm)

\begin{equation*}
R_{x}(D_{\varepsilon})=\sum_{\lambda_{n}<x} a_{n}(D)e^{- \varepsilon\lambda_{n}} \bigg(1-\frac{\lambda_{n}}{x}\bigg) e^{-\lambda_{n}s}, ~x>0,
\end{equation*}
define a Cauchy net in $\mathcal{D}_{\infty}(\lambda)$.  Then $(R_{x}(D_{\varepsilon}))$ by $(b)$ has a limit in $\mathcal{D}_{\infty}(\lambda)$, which is $D_{\varepsilon}$ with $a_{n}(D_{\varepsilon})=a_{n}(D)e^{-\varepsilon\lambda_{n}}$ for all $n$ and $\|D_{\varepsilon}\|_{\mathcal{D}_{\infty}(\lambda)}\le \|D\|_{\mathcal{H}_{\infty}(\lambda)}$ for all $\varepsilon>0$.
Hence, as desired,  $D \in \mathcal{D}_{\infty}(\lambda)$.

In a second step, we check that $(a) \Leftrightarrow (c)$, and start with the implication $(a) \Rightarrow (c)$.
So let  again $D \in \mathcal{H}_{\infty}(\lambda)$. We have to show that $D \in \mathcal{D}_{\infty}(\lambda)$.
By Lemma \ref{complHelson} there is some $\lambda$-Dirichlet group $(G, \beta)$ and some $\omega \in G$ such that $D^{\omega}\in \mathcal{D}_{\infty}(\lambda)$
and $\|D\|_{\mathcal{H}_\infty(\lambda)} = \|D^\omega\|_{\mathcal{D}_\infty(\lambda)} $. We denote by $D_N$ the $N$th partial sum of $D_N$, and by $D_{N,\varepsilon}$
its horizontal translation by $\varepsilon >0$. Then, for every $\varepsilon>0$, assuming Bohr's theorem for $\lambda$, the sequence $(D^{\omega}_{N,\varepsilon})$ converges to $D^{\omega}_\varepsilon$ in $\mathcal{D}_\infty(\lambda)$. By   \cite[Corollary 4.4]{DefantSchoolmann2} we know that
\begin{equation*}
\sup_{N}\sup_{t\in \R} | D_{N,\varepsilon}(it)|=  \sup_{N}\sup_{t\in \R} | D^\omega_{N,\varepsilon}(it)|<\infty,
\end{equation*}
which by \eqref{alwaysBohrcahenHelson} implies that $\sigma_{u}(D)\le 0$.  So $D$ converges on the right half-plane, and it  remains to show that the limit function of $D$ is bounded on all of $[Re >0]$.
Indeed, if $\varepsilon>0$, then for  large $N$ (again by \cite[Corollary 4.4]{DefantSchoolmann2})
\begin{align*}
\|D_{N,\varepsilon}\|_{\mathcal{D}_\infty(\lambda)}
&
= \|D^\omega_{N,\varepsilon}\|_{\mathcal{D}_\infty(\lambda)}
\\&
 \leq 1 + \|D^\omega_{\varepsilon}\|_{\mathcal{D}_\infty(\lambda)}
\leq  1+ \|D^\omega\|_{\mathcal{D}_\infty(\lambda)} =  1+ \|D\|_{\mathcal{H}_\infty(\lambda)}\,.
\end{align*}
Hence $\|D\|_{\mathcal{D}_\infty(\lambda)} \leq 1 + \|D \|_{\mathcal{H}_\infty(\lambda)} < \infty$, the conclusion.

Assume conversely that $(c)$ holds, that is,   $\mathcal{D}_\infty(\lambda) = \mathcal{H}_\infty(\lambda)$. Then
 $\mathcal{D}_\infty(\lambda)$ is complete and by \eqref{isometricembeddingAHelson} we have  $\mathcal{D}_{\infty}^{ext}(\lambda)=\mathcal{D}_{\infty}(\lambda)$.
In order to check $(a)$ take  some  $D\in \mathcal{D}_{\infty}^{ext}(\lambda)$; we have to show that $\sigma_{u}(D)\le 0$.
 Indeed, by Lemma~\ref{previousHelson} and another application of the Bohr-Cahen formula~\eqref{alwaysBohrcahenHelson} we know that $\sigma_{u}(D_\varepsilon)\le 0 $ for all $\varepsilon>0$, which implies  $\sigma_{u}(D)\le 0$.
\end{proof}

\begin{Rema} \label{extHelson} A simple analysis of the previous proof shows that the equivalence (b) and (c) of Theorem \ref{equivalenceHelson} holds true, if we replace $\mathcal{D}_{\infty}(\lambda)$ by $\mathcal{D}_{\infty}^{ext}(\lambda)$, that is for any frequency $\lambda$ we have that
  $\mathcal{D}^{ext}_{\infty}(\lambda)$ is complete if and only if $\mathcal{D}_{\infty}^{ext}(\lambda)=\mathcal{H}_{\infty}(\lambda)$. Indeed, if we assume that $\mathcal{D}^{ext}_{\infty}
(\lambda)$ is complete, then in particular for $\varepsilon=1$ the sequence $(R_{x}(D_{1}))$ has a limit $D_{1}\in \mathcal{D}_{\infty}^{ext}(\lambda)$. Hence $\sigma_{c}(D_{1})<\infty$, which implies $\sigma_{c}(D)<\infty$ and so $D\in \mathcal{D}_{\infty}^{ext}(\lambda)$. Again, we do not know whether completeness of  $\mathcal{D}^{ext}_{\infty}(\lambda)$ implies, that $\lambda$ satisfies Bohr's theorem.
\end{Rema}

Let us apply  Theorem \ref{equivalenceHelson} to the concrete frequency $\lambda = (\sqrt{\log n})$ which obviously satisfies $(LC)$, so fulfills Bohr's theorem. Then, although in this case
$L((\sqrt{\log n}))=+\infty$ (!), we may conclude the  following (apparently non-trivial) application.

\begin{Coro}
$\mathcal{D}_{\infty}((\sqrt{\log n})) = \mathcal{H}_{\infty}((\sqrt{\log n}))$, and $\mathcal{D}_{\infty}((\sqrt{\log n}))$ is complete.
\end{Coro}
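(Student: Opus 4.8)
The plan is to read the corollary directly off Theorem~\ref{equivalenceHelson}. It suffices to show that $\lambda = (\sqrt{\log n})$ satisfies Bohr's theorem, i.e. statement (a) of that theorem; then statements (b) and (c) hold automatically, and these are precisely the completeness of $\mathcal{D}_{\infty}((\sqrt{\log n}))$ and the isometric coincidence $\mathcal{D}_{\infty}((\sqrt{\log n})) = \mathcal{H}_{\infty}((\sqrt{\log n}))$. So the whole content of the corollary is the verification that $(\sqrt{\log n})$ lies in the class for which Bohr's theorem is known to hold.

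To establish Bohr's theorem for $(\sqrt{\log n})$ I would check Landau's condition $(LC)$ and then quote the list of testable cases recalled above (from \cite[Remark 4.8]{Schoolmann}), according to which $(LC)$ implies Bohr's theorem. For the gap estimate one writes, for $n$ large and a suitable $c>0$,
$$\sqrt{\log(n+1)} - \sqrt{\log n} = \frac{\log(1+1/n)}{\sqrt{\log(n+1)} + \sqrt{\log n}} \geq \frac{c}{n\sqrt{\log n}}\,,$$
so it is enough that for each $\delta>0$ one has $e^{-e^{\delta\sqrt{\log n}}} \leq \frac{c}{n\sqrt{\log n}}$ for all large $n$ (the finitely many remaining $n$ being absorbed by shrinking the constant). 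Taking logarithms, this amounts to $e^{\delta\sqrt{\log n}} \geq \log n + \log\sqrt{\log n} - \log c$ eventually, which after the substitution $t = \sqrt{\log n}$ is the elementary fact that $e^{\delta t}$ eventually dominates $t^{2}$. Hence $(LC)$ holds; alternatively one may simply invoke the remark already made in the introduction that $(\sqrt{\log n})$ satisfies $(LC)$.

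Given $(LC)$, the frequency $(\sqrt{\log n})$ satisfies Bohr's theorem, and Theorem~\ref{equivalenceHelson} delivers both assertions of the corollary at once. There is essentially no obstacle beyond this routine asymptotic check: the substance is already packed into Theorem~\ref{equivalenceHelson}, and through it into the Helson-type Theorem~\ref{DirichletintegerHelson} used via Lemma~\ref{complHelson}. What makes the corollary worth recording is that $L((\sqrt{\log n})) = \limsup_{n} \frac{\log n}{\sqrt{\log n}} = +\infty$, so the earlier completeness and coincidence criteria recalled above — all of which require $L(\lambda) < \infty$ in addition to $(LC)$ — do not apply here; it is exactly the removal of the hypothesis $L(\lambda)<\infty$ in Theorem~\ref{equivalenceHelson} that makes this example accessible.
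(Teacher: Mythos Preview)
Your proposal is correct and follows exactly the paper's approach: verify that $(\sqrt{\log n})$ satisfies $(LC)$ (the paper simply asserts this, referring back to the introduction, while you spell out the gap estimate), hence Bohr's theorem holds, and then invoke Theorem~\ref{equivalenceHelson} to obtain both conclusions. Your closing remark about $L((\sqrt{\log n}))=+\infty$ and why this makes the corollary noteworthy also mirrors the paper's own commentary preceding the statement.
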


\smallskip
\subsection{Norm  of the partial sum operator  in $\pmb{\mathcal{H}_{\infty}(\lambda)}$}

Recall from above that Bohr's theorem holds for $\lambda=(\log n)$, and that a quantitative variant of this (see again \cite[Theorem 6.2.2., p. 143]{QQ} or \cite[Theorem 1.13, p. 21]{Defant}) reads as follows: There is a constant $C>0$ such that for every $D\in \mathcal{D}_{\infty}((\log n))$ and $N$
\begin{equation} \label{ordinaryBohrquantiHelson}
\big\| \sum_{n=1}^{N} a_{n} n^{-s} \big\|_{\infty} \le C\log(N) \|D\|_{\infty}.
\end{equation}
Given an arbitrary frequency $\lambda$, we are interested
 in establishing quantitative  variants of Bohr's theorem in the sense of (\ref{ordinaryBohrquantiHelson}), and this means to control the norm of the partial sum operator
\begin{equation*}
S_{N}\colon \mathcal{D}^{ext}_{\infty}(\lambda) \to \mathcal{D}_{\infty}(\lambda), ~~ D \mapsto \sum_{n=1}^{N}a_{n}(D) e^{-\lambda_{n}s}.
\end{equation*}
The main result of \cite[Theorem 3.2]{Schoolmann} is then, that for all $0<k\le 1$, $D=\sum a_{n}e^{-\lambda_{n}s}\in \mathcal{D}^{ext}_{\infty}(\lambda)$ and $N$ we have
\begin{equation} \label{normofSNHelson}
\big\|\sum_{n=1}^{N} a_{n}e^{-\lambda_{n}s}\big\|_{\infty}\le C \frac{\Gamma(k+1)}{k} \bigg(\frac{\lambda_{N}}{\lambda_{N+1}-\lambda_{N}}\bigg)^{k} \|D\|_{\infty},
\end{equation}
where $C$ is an absolute constant and $\Gamma$ denotes the Gamma function. The case $p=\infty$ of Lemma \ref{jojHelson}  extends (\ref{normofSNHelson}) from $\mathcal{D}^{ext}_{\infty}(\lambda)$ to $\mathcal{H}_{\infty}(\lambda)$.

\begin{Theo} \label{coro22Helson} Let $\lambda$ be an arbitrary freuency. Then for all $D\in \mathcal{H}_{\infty}(\lambda)$, all $0<k\le 1$ and all $N$ we have
\begin{equation*}
\big\|\sum_{n=1}^{N} a_{n}(D) e^{-\lambda_{n}s} \big\|_{\infty} \le \frac{C}{k} \bigg(\frac{\lambda_{N+1}}{\lambda_{N+1}-\lambda_{N}}\bigg)^{k} \|D\|_{\infty},
\end{equation*}
where $C>0$ is a universal constant.
\end{Theo}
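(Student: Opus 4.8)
The plan is to obtain the estimate as the endpoint case $p=\infty$ of Lemma~\ref{jojHelson}, exploiting that the operator $T_{max}$ there is bounded with a \emph{universal} constant --- independent of $\lambda$, of the chosen $\lambda$-Dirichlet group $(G,\beta)$, and of the sequence $(k_N)$. Indeed, inspecting the proof of Lemma~\ref{jojHelson}, the pointwise bound \eqref{bjHelson} reads $|T_{max}(f)(\omega)|\le 3C\,\overline{M}(f)(\omega)$ almost everywhere, with $C$ the absolute constant taken from \cite[Proposition 3.2]{DefantSchoolmann3}; combining this with the $L_\infty(G)$-$L_\infty(G)$ boundedness of $\overline{M}$ from \cite[Theorem 2.10]{DefantSchoolmann3}, one gets a universal constant $C_0>0$ with $\|T_{max}(f)\|_{L_\infty(G)}\le C_0\|f\|_\infty$ for \emph{every} $\lambda$-Dirichlet group and \emph{every} $f\in H_\infty^\lambda(G)$, regardless of the sequence $(k_N)$.

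Fixing now $N$ and $0<k\le 1$, I would apply this bound to the constant sequence $k_m\equiv k$. Since $T_{max}(f)(\omega)$ dominates the single term of index $N$ in its defining supremum, this gives, for every $f\in H_\infty^\lambda(G)$ and almost every $\omega\in G$,
\[
k\Big(\frac{\lambda_{N+1}-\lambda_N}{\lambda_{N+1}}\Big)^{k}\Big|\sum_{n=1}^N \widehat{f}(h_{\lambda_n})h_{\lambda_n}(\omega)\Big|\le C_0\|f\|_\infty .
\]
Because $\omega\mapsto\sum_{n=1}^N \widehat{f}(h_{\lambda_n})h_{\lambda_n}(\omega)$ is a continuous function on $G$, the almost-everywhere inequality holds everywhere and hence passes to the supremum over $\omega$; rearranging yields
\[
\Big\|\sum_{n=1}^N \widehat{f}(h_{\lambda_n})h_{\lambda_n}\Big\|_{L_\infty(G)}\le \frac{C_0}{k}\Big(\frac{\lambda_{N+1}}{\lambda_{N+1}-\lambda_N}\Big)^{k}\|f\|_\infty .
\]

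Finally I would pass to Dirichlet series via the Bohr map \eqref{BohrmapHelson}. Given $D\in\mathcal{H}_\infty(\lambda)$, pick any $\lambda$-Dirichlet group $(G,\beta)$ and the associated $f\in H_\infty^\lambda(G)$, so that $a_n(D)=\widehat{f}(h_{\lambda_n})$ and $\|D\|_\infty=\|f\|_\infty$. The partial sum $\sum_{n=1}^N a_n(D)e^{-\lambda_n s}$ is a Dirichlet polynomial, so it lies in $\mathcal{H}_\infty(\lambda)$ (indeed in $\mathcal{D}_\infty(\lambda)$, and for a Dirichlet polynomial all the relevant $\infty$-norms coincide) and is the Bohr image of the trigonometric polynomial $\sum_{n=1}^N \widehat{f}(h_{\lambda_n})h_{\lambda_n}$; since the Bohr map is an isometry, the last displayed inequality becomes precisely the assertion of the theorem with $C:=C_0$. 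The only delicate point is the bookkeeping of constants: one must verify that nothing in the chain relating $T_{max}$ to $\overline{M}$ and then to its $L_\infty$-boundedness depends on $\lambda$, on $(G,\beta)$, or on $k$ --- but this is exactly what the proof of Lemma~\ref{jojHelson} already delivers.
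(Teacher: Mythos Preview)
Your argument is correct and is essentially the same as the paper's: both extract the $N$th term from the supremum defining $T_{max}$ in Lemma~\ref{jojHelson} (with the constant sequence $k_m\equiv k$), invoke the $L_\infty$--$L_\infty$ boundedness of $T_{max}$ coming from \eqref{bjHelson} and the boundedness of $\overline{M}$, and then read off the Dirichlet-series inequality via the Bohr isometry. You are just more explicit than the paper about the universality of the constant and about the passage from $H_\infty^\lambda(G)$ to $\mathcal{H}_\infty(\lambda)$.
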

In particular, assuming $(LC)$ (respectively, (BC)) for $\lambda$ and choosing $k_{N}=e^{-\delta\lambda_{N}}$ (respectively, $k_{N}=\lambda_{N}^{-1}$) we deduce from Theorem \ref{coro22Helson}
(see also again \eqref{(A)Helson}) the following quantitative variants of Bohr's theorem in $\mathcal{H}_{\infty}(\lambda)$. See \cite[Section 4]{Schoolmann} for the corresponding results for $\mathcal{D}^{ext}_{\infty}(\lambda)$.
\begin{Coro} \label{coro1Helson} Let $(LC)$ hold for $\lambda$. Then to every $\delta>0$ there is a constant $C=C(\delta)$ such that for all $D\in \mathcal{H}_{\infty}(\lambda)$ and $N$
\begin{equation*}
\big\|\sum_{n=1}^{N} a_{n}(D) e^{-\lambda_{n}s} \big\|_{\infty} \le Ce^{\delta\lambda_{N}} \|D\|_{\infty}.
\end{equation*}
If $\lambda$ satisfies (BC), then for every $D\in \mathcal{H}_{\infty}(\lambda)$ and $N$
\begin{equation*}
\big\|\sum_{n=1}^{N} a_{n}(D)e^{-\lambda_{n}s}\big \|_{\infty} \le C_{1}\lambda_{N} \|D\|_{\infty}.
\end{equation*}
with an absolute constant $C_{1}>0$.
\end{Coro}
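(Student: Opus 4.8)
The plan is to read off both estimates from Theorem~\ref{coro22Helson} by inserting the sequence $(k_N)\subset(0,1]$ already announced before the statement, namely $k_N=e^{-\delta\lambda_N}$ in the $(LC)$ case and $k_N=\lambda_N^{-1}$ in the $(BC)$ case. Applying Theorem~\ref{coro22Helson} with $k=k_N$ gives
\begin{equation*}
\Big\| \sum_{n=1}^N a_n(D) e^{-\lambda_n s} \Big\|_\infty \le \frac{C}{k_N}\,\Big( \frac{\lambda_{N+1}}{\lambda_{N+1}-\lambda_N} \Big)^{k_N} \|D\|_\infty ,
\end{equation*}
so, since $1/k_N$ equals $e^{\delta\lambda_N}$, resp.\ $\lambda_N$, everything reduces to proving that the factor $\big(\lambda_{N+1}/(\lambda_{N+1}-\lambda_N)\big)^{k_N}$ is bounded by a constant depending only on $\lambda$ (and $\delta$). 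The only subtlety is that a priori there is no upper bound for the gaps, so I would split the verification into the two regimes $\lambda_{N+1}\ge 2\lambda_N$ and $\lambda_{N+1}<2\lambda_N$; in the first one $\lambda_{N+1}-\lambda_N\ge\lambda_{N+1}/2$, hence the factor is at most $2^{k_N}\le 2$, and in the second one the inequality $\lambda_{N+1}<2\lambda_N$ provides the needed control of $\lambda_{N+1}$ from above.

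For the first assertion, fix $\delta>0$ and invoke $(LC)$ with parameter $\delta/2$, producing $C'=C'(\lambda,\delta)>0$ with $\lambda_{n+1}-\lambda_n\ge C'e^{-e^{(\delta/2)\lambda_n}}$ for all $n$. With $k_N=e^{-\delta\lambda_N}$, in the regime $\lambda_{N+1}<2\lambda_N$ one passes to logarithms and estimates
\begin{equation*}
k_N\log\frac{\lambda_{N+1}}{\lambda_{N+1}-\lambda_N}\le k_N\big(\log(2\lambda_N)-\log C'+e^{(\delta/2)\lambda_N}\big),
\end{equation*}
where $k_N e^{(\delta/2)\lambda_N}=e^{-(\delta/2)\lambda_N}\le1$ and $k_N\log(2\lambda_N)$, $k_N|\log C'|$ are bounded uniformly in $N$; together with the trivial bound in the regime $\lambda_{N+1}\ge 2\lambda_N$ this shows $\big(\lambda_{N+1}/(\lambda_{N+1}-\lambda_N)\big)^{k_N}\le C(\lambda,\delta)$, and multiplying by $C/k_N=Ce^{\delta\lambda_N}$ gives the claim. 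This is essentially a repackaging of the inequality~\eqref{(A)Helson} already used in the proof of Theorem~\ref{HelsonstheoHelson}.

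For the second assertion one argues identically with $(BC)$ in place of $(LC)$: choosing $\delta=1$ in $(BC)$ yields $\lambda_{n+1}-\lambda_n\ge C'e^{-(l+1)\lambda_n}$, and with $k_N=\lambda_N^{-1}$ the regime $\lambda_{N+1}<2\lambda_N$ gives
\begin{equation*}
\frac{1}{\lambda_N}\log\frac{\lambda_{N+1}}{\lambda_{N+1}-\lambda_N}\le\frac{1}{\lambda_N}\big(\log(2\lambda_N)-\log C'+(l+1)\lambda_N\big),
\end{equation*}
whose right-hand side is bounded (it tends to $l+1$), so again $\big(\lambda_{N+1}/(\lambda_{N+1}-\lambda_N)\big)^{1/\lambda_N}$ is bounded and multiplying by $C/k_N=C\lambda_N$ finishes the proof. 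I do not expect any genuine obstacle here: once Theorem~\ref{coro22Helson} is available the whole argument is a short elementary estimate, and the only bookkeeping points are the dichotomy on the size of $\lambda_{N+1}$ and the handling of the finitely many small indices where $\lambda_N$ is close to $0$ (for which $k_N$ must be truncated at $1$ and the inequalities become trivial), both absorbed into the constants.
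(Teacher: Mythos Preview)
Your proof is correct and follows exactly the approach the paper indicates: apply Theorem~\ref{coro22Helson} with $k_N=e^{-\delta\lambda_N}$ (resp.\ $k_N=\lambda_N^{-1}$) and verify that the factor $\big(\lambda_{N+1}/(\lambda_{N+1}-\lambda_N)\big)^{k_N}$ stays bounded, which is precisely the content of~\eqref{(A)Helson} the paper refers to. Your dichotomy $\lambda_{N+1}\gtrless 2\lambda_N$ is in fact a useful explicit way to handle the mismatch between the $\lambda_{N+1}$ in Theorem~\ref{coro22Helson} and the $\lambda_N$ appearing in~\eqref{(A)Helson}, a point the paper passes over silently; otherwise the arguments coincide.
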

\begin{proof}[Proof of Theorem \ref{coro22Helson}] Let us for simplicity write $C=C(k,N):=\frac{1}{k}\bigg(\frac{\lambda_{N+1}}{\lambda_{N+1}-\lambda_{N}}\bigg)^{k}.$
Then for all $\omega \in G$ with $T_{\max}$ from Lemma \ref{jojHelson} we have
\begin{align*}
\big|\sum_{n=1}^{N} \widehat{f}(h_{\lambda_{n}}) h_{\lambda_{n}}(\omega)\big|=C C^{-1}\big|\sum_{n=1}^{N} \widehat{f}(h_{\lambda_{n}})h_{\lambda_{n}}(\omega)\big|\le C T_{\max}(f)(\omega),
\end{align*}
and so the claim follows, since $T_{\max}\colon H_{\infty}^{\lambda}(G) \to L_{\infty}(G)$ is bounded.
\end{proof}

\smallskip

\subsection{Montel theorem}
In the case of ordinary Dirichlet series, so series with frequency $\lambda=(\log n)$, Bayart in \cite{Bayart} (see also \cite[Theorem 3.11]{Defant} or \cite[Theorem 6.3.1]{QQ}) proves an important  Montel-type theorem in $\mathcal{H}_{\infty}=\mathcal{D}_{\infty}((\log n))$: For every bounded sequence $(D^{j})$ in $\mathcal{D}_{\infty}((\log n))$ there are  a subsequence $(D^{j_{k}})$ and $D\in \mathcal{D}_{\infty}((\log n))$ such that $(D^{j_{k}})$ converges uniformly to $D$ on $[Re>\varepsilon]$ for every $\varepsilon>0$.

Bayart's Montel theorem extends to $\lambda$-Dirichlet series whenever $\lambda$ satisfies (LC) or  $L(\lambda)=0$, or is $\mathbb{Q}$-linearly independent (see \cite[Theorem 4.10]{Schoolmann}). Moreover, as proven in \cite[Theorem 4.19]{DefantSchoolmann2}, under one of the the assumptions [(LC) and $L(\lambda)<\infty$, or $L(\lambda)=0$, or  $\mathbb{Q}$-linear independence] it extends from $\mathcal{D}_{\infty}(\lambda)$ to $\mathcal{H}_{p}(\lambda)$.

We prove a considerable extension of all this.
A consequence of Theorem \ref{equivalenceHelson} shows that Bayart's Montel theorem holds for every frequency $\lambda$ which satisfies Bohr's theorem (or equivalently (b) or (c) from Theorem \ref{equivalenceHelson}).

\begin{Theo} \label{MontelHelson} Assume that Bohr's theorem holds for $\lambda$ and  $1\le p \le \infty$. Then for every bounded sequence $(D^{j})$ in $\mathcal{H}_{p}(\lambda)$ there is a subsequence $(D^{j_{k}})_{k}$ and $D\in \mathcal{H}_{p}(\lambda)$ which converges to $D_{\varepsilon}$ in $\mathcal{H}_{p}(\lambda)$ for every $\varepsilon>0$. The same result holds true, if we replace $\mathcal{H}_{p}(\lambda)$ by $\mathcal{D}_{\infty}(\lambda)$.
\end{Theo}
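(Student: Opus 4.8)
The plan is to push the whole statement into Hardy spaces on one fixed, \emph{metrizable} $\lambda$-Dirichlet group and to reduce it to a single compactness fact. Concretely I would work with Duy's group $G=G_{D}=\widehat{(U,d)}$, $U$ the subgroup of $\R$ generated by $\lambda$: since $U$ is countable, $G_{D}$ is compact metrizable, so $C(G)$ and $L_{1}(G)$ are separable and bounded sets in $M(G)=C(G)^{*}$ and in $L_{\infty}(G)=L_{1}(G)^{*}$ are weak-$*$ sequentially compact. Using that $\Bcal\colon H_{p}^{\lambda}(G)\to\Hcal_{p}(\lambda)$ is an onto isometry with $\Bcal(f\ast p_{u})=D_{u}$, a bounded sequence $(D^{j})$ in $\Hcal_{p}(\lambda)$ corresponds to a bounded sequence $(f^{j})$ in $H_{p}^{\lambda}(G)\subset L_{p}(G)$, and the theorem becomes: there are a subsequence $(f^{j_{k}})$ and an $f\in H_{p}^{\lambda}(G)$ with $f^{j_{k}}\ast p_{u}\to f\ast p_{u}$ in $L_{p}(G)$ for every $u>0$.

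The crucial lemma is that \emph{for all $1\le p\le\infty$ and $u>0$ the operator $T_{u}\colon H_{p}^{\lambda}(G)\to L_{p}(G)$, $f\mapsto f\ast p_{u}$, is compact}, with no condition on $\lambda$. I would prove this by approximating $T_{u}$ in operator norm by finite rank maps built from a modulated Poisson kernel. For $N\in\N$ let $\nu_{N}\in M(G)$ be the measure arising from the $L_{1}(\R)$-function $t\mapsto e^{-u\lambda_{N+1}}\,h_{\lambda_{N+1}}(\beta(t))\,P_{u}(t)$, which is legitimate since $\lambda_{N+1}\in\widehat\beta(\widehat G)$. Then $\|\nu_{N}\|=e^{-u\lambda_{N+1}}$ and $\widehat{\nu_{N}}(h_{x})=e^{-u\lambda_{N+1}}e^{-u|x-\lambda_{N+1}|}$, so $\widehat{\nu_{N}}(h_{\lambda_{n}})=e^{-u\lambda_{n}}=\widehat{p_{u}}(h_{\lambda_{n}})$ whenever $\lambda_{n}\ge\lambda_{N+1}$. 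Hence for $f\in H_{p}^{\lambda}(G)$ the function $f\ast(p_{u}-\nu_{N})$ lies in $\operatorname{span}\{h_{\lambda_{1}},\dots,h_{\lambda_{N}}\}$ (only the coefficients $\widehat f(h_{\lambda_{n}})$ with $\lambda_{n}<\lambda_{N+1}$ survive), so $f\mapsto f\ast(p_{u}-\nu_{N})$ has rank $\le N$, while $\|T_{u}f-f\ast(p_{u}-\nu_{N})\|_{p}=\|f\ast\nu_{N}\|_{p}\le e^{-u\lambda_{N+1}}\|f\|_{p}\to0$ as $N\to\infty$ because $\lambda_{n}\to\infty$.

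Granting the lemma I would finish case by case. For $1<p<\infty$: $L_{p}(G)$ is reflexive, so along a subsequence $f^{j_{k}}\rightharpoonup f$; the norm-closed subspace $H_{p}^{\lambda}(G)$ is weakly closed, hence $f\in H_{p}^{\lambda}(G)$, and compactness of $T_{u}$ yields $f^{j_{k}}\ast p_{u}\to f\ast p_{u}$ in $L_{p}(G)$ for every $u>0$. For $p=\infty$: along a subsequence $f^{j_{k}}\to f$ weak-$*$ in $L_{\infty}(G)$, the space $H_{\infty}^{\lambda}(G)$ is weak-$*$ closed, and since $T_{u}$ is compact and weak-$*$-to-weak-$*$ continuous (an adjoint of convolution on $L_{1}$) one again gets norm convergence $f^{j_{k}}\ast p_{u}\to f\ast p_{u}$. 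For $p=1$: along a subsequence $f^{j_{k}}\to\mu$ weak-$*$ in $M(G)$; testing against characters shows $\mu$ has spectrum in $\{h_{\lambda_{n}}\}$, and since $\widehat G$ embeds as an (archimedean) ordered subgroup of $\R$, the F.\ and M.\ Riesz theorem for compact abelian groups with ordered dual forces $\mu$ to be absolutely continuous, $d\mu=f\,dm_{G}$ with $f\in H_{1}^{\lambda}(G)$; compactness of $T_{u}$ and uniqueness of the weak-$*$ limit then give $f^{j_{k}}\ast p_{u}\to\mu\ast p_{u}=f\ast p_{u}$ in $L_{1}(G)$. In each case $D:=\Bcal(f)\in\Hcal_{p}(\lambda)$, and applying $\Bcal$ gives $(D^{j_{k}})_{u}\to D_{u}$ in $\Hcal_{p}(\lambda)$ for all $u>0$.

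For the variant with $\mathcal{D}_{\infty}(\lambda)$ in place of $\Hcal_{p}(\lambda)$ I would simply invoke Theorem~\ref{equivalenceHelson}: Bohr's theorem for $\lambda$ gives $\mathcal{D}_{\infty}(\lambda)=\Hcal_{\infty}(\lambda)$ isometrically, so a bounded sequence in $\mathcal{D}_{\infty}(\lambda)$ is one in $\Hcal_{\infty}(\lambda)$ and the case $p=\infty$ already proved produces $D\in\Hcal_{\infty}(\lambda)=\mathcal{D}_{\infty}(\lambda)$ with $(D^{j_{k}})_{\varepsilon}\to D_{\varepsilon}$ in $\mathcal{D}_{\infty}(\lambda)$; this is the one point where Bohr's theorem is used. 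I expect the two delicate steps to be the uniform-in-$p$ compactness of $T_{u}$ (the modulation device above) and the case $p=1$: bounded sets of $H_{1}^{\lambda}(G)$ are \emph{not} weakly compact in $L_{1}(G)$ (suitably normalised reproducing kernels of $H_{1}(\T)$ already concentrate), so the detour through $M(G)$ plus the absolute-continuity input from the F.\ and M.\ Riesz theorem is genuinely needed to keep the limit inside $\Hcal_{1}(\lambda)$ and not merely in the larger space of $\lambda$-measures.
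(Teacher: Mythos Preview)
Your argument is correct and follows a route genuinely different from the paper's. The paper first handles $p=\infty$ by a diagonal extraction on coefficients together with the partial-sum bound of Lemma~\ref{previousHelson} (which hinges on completeness of $\mathcal{D}_\infty(\lambda)$, hence on Bohr's theorem), and then reduces $1\le p<\infty$ to this via the vector-valued embedding $\Hcal_p(\lambda)\hookrightarrow\mathcal D_\infty(\lambda,\Hcal_p(\lambda))$ and the vector-valued analogue of \cite[Proposition~2.4]{Schoolmann}. Your approach instead isolates a single clean mechanism---compactness of $T_u\colon f\mapsto f\ast p_u$ on $H_p^\lambda(G)$, proved by the elegant modulated-Poisson finite-rank approximation---and then lets abstract weak(-$*$) sequential compactness on a metrizable $\lambda$-Dirichlet group do the rest, with the Helson--Lowdenslager F.\ and M.\ Riesz theorem handling the identification of the weak-$*$ limit in $M(G)$ for $p=1$.

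Two features of your route are worth highlighting. First, it is strictly more general on the $\Hcal_p$ side: your compactness lemma and the weak(-$*$) extractions use no hypothesis on $\lambda$ whatsoever, so your proof actually yields the Montel statement for $\Hcal_p(\lambda)$, $1\le p\le\infty$, for \emph{every} frequency $\lambda$; Bohr's theorem is invoked only at the very end, via Theorem~\ref{equivalenceHelson}, to transfer the $p=\infty$ case to $\mathcal D_\infty(\lambda)$. In the paper's argument, by contrast, Bohr's theorem is woven into the $\Hcal_p$ case as well, through \eqref{translationinftyHelson}. Second, your proof avoids the vector-valued machinery and the external inputs from \cite{Schoolmann} and \cite{vectorvalued}, trading them for standard functional analysis plus one classical harmonic-analysis fact (F.\ and M.\ Riesz on groups with archimedean ordered dual). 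The price is that the $p=1$ case now rests on a nontrivial absolute-continuity theorem, whereas the paper's route is more elementary in that respect; your own remark that bounded sets in $H_1^\lambda(G)$ are not weakly relatively compact in $L_1(G)$ explains why this detour through $M(G)$ is unavoidable in your scheme.
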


We follow the same strategy as in the proof of \cite[Theorem 4.19]{DefantSchoolmann2}. We first prove Theorem \ref{MontelHelson} for $\mathcal{D}_{\infty}(\lambda)$, and then, using some vector valued arguments, we extend this result to  $\mathcal{H}_{p}(\lambda)$.

Therefore, let us recall, that, given a frequency $\lambda$ and a Banach space $X$, we denote by $\mathcal{D}_{\infty}(\lambda,X)$ the linear space of all Dirichlet series $D=\sum a_{n}e^{-\lambda_{n}s}$ which have  coefficients $(a_{n})\subset X$ and which  converge and  define a bounded function on $[Re>0]$ (then being holomorphic and with values in $X$).
 A result from \cite{vectorvalued} states that for any non-trivial Banach space $X$, the space $\mathcal{D}_{\infty}(\lambda)$ is complete if and only if  $\mathcal{D}_{\infty}(\lambda,X)$ is complete (again  endowed with sup norm on $[Re >0]$).

 Moreover, a standard Hahn-Banach argument shows that Lemma~\ref{previousHelson} extends  from the scalar-valued case to the vector-valued case:
Given  $\varepsilon >0$,  there is a constant $C=C(\varepsilon) >0$ such that for
every   Banach space $X$ and every $D=\sum a_{n}e^{-\lambda_{n}s} \in \mathcal{D}_{\infty}(\lambda,X)$
\begin{equation} \label{translationinftyHelson}
\sup_{N}\big\| \sum_{n=1}^{N} a_{n}e^{-\varepsilon\lambda_{n}}e^{-\lambda_{n}s}\big\|_{\infty} \le C\|D\|_{\infty}\,,
\end{equation}
provided that $\mathcal{D}_{\infty}(\lambda)$ is complete, or equivalently $\lambda$ satisfies
 Bohr's theorem (Theorem \ref{equivalenceHelson}). Indeed, apply  Lemma~\ref{previousHelson} to the Dirichlet series
 $x^\ast \circ D=\sum x^{\ast}(a_{n})e^{-\lambda_{n}s} \in \mathcal{D}_{\infty}(\lambda),\, x^\ast \in X^\ast$, and use a standard Hahn-Banach argument.

\begin{proof}[Proof of Theorem \ref{MontelHelson}] We first assume that  $p=\infty$, so that by assumption and Theorem \ref{equivalenceHelson}  we have that $\mathcal{D}_{\infty}(\lambda)=\mathcal{H}_{\infty}(\lambda)$. Moreover, we at first look at a bounded sequence  $(D^{j})$  in $\mathcal{D}_{\infty}(\lambda)$, and denote the  coefficients of $D^{j}$ by $(a_{n}^{j})_{n}$. So, by \cite[Corollary 3.9]{Schoolmann} there is a constant $C>0$ such that for all $n$, $j$
\begin{equation}
|a_{n}^{j}|\le \|D^{j}\|_{\infty}\le \sup_{j} \|D^{j}\|_{\infty}\le C <\infty.
\end{equation}
Hence by a diagonal process we find a subsequence $(j_{k})_{k}$ such that $\lim_{k\to \infty} a^{j_{k}}_{n}=:a_{n}$ exists for all $n$. Moreover, applying \eqref{translationinftyHelson} we obtain for every $\varepsilon>0$
a constant $C_1 = C_1(\varepsilon)>0$ such that for all $N$
\begin{equation*}
\sup_{k} \big\| \sum_{n=1}^{N} a_{n}^{j_{k}} e^{-\varepsilon\lambda_{n}}e^{-\lambda_{n}s}\big\|_{\infty}\le C_{1}\sup_{k}\|D^{j_{k}}\|_{\infty}<C_{1}C<\infty\,.
\end{equation*}
 Hence with $D=\sum a_{n}e^{-\lambda_{n}s}$, by \cite[Proposition 2.4]{Schoolmann} we obtain that $(D^{j_{k}}_{\varepsilon})$ converges uniformly to $D_{\varepsilon}$ on $[Re>\delta]$ for every $\delta>0$, which proves the claim for $\mathcal{D}_{\infty}(\lambda)$. Now let $1\le p < \infty$ and $(D^{j})$ a bounded sequence in $\mathcal{H}_{p}(\lambda)$. Since $\mathcal{D}_{\infty}(\lambda)$ is complete under Bohr's theorem (Theorem \ref{equivalenceHelson}), by \cite[Lemma 4.9]{DefantSchoolmann2} the map
\begin{equation*} \label{embeddingptoDirichletHelson}
\Phi \colon \mathcal{H}_{p}(\lambda) \hookrightarrow \mathcal{D}_{\infty}(\lambda,\mathcal{H}_{p}(\lambda)), ~~ \sum a_{n}e^{-\lambda_{n}s} \mapsto \sum (a_{n}e^{-\lambda_{n}z}) e^{-\lambda_{n}s}
\end{equation*}
defines an into isometry. Hence $(\Phi(D^{j}))$ is a bounded sequence in $\mathcal{D}_{\infty}(\lambda,\mathcal{H}_{p}(\lambda))$ and again for all $n$, $j$
\begin{equation*}
|a_{n}^{j}|=\|a_{n}^{j}e^{-\lambda_{n}z}\|_{p}\le \|\Phi(D^{j})\|=\|D^{j}\|_{p}\le \sup_{j} \|D^{j}\|_{p} \le C<\infty\,,
\end{equation*}
for some absolute constant $C>0$. By another  diagonal process we obtain a subsequence $(j_{k})_{k}$ such that $\lim_{k\to \infty} a_{n}^{j_{k}}=:a_{n}$ exists, and using  \eqref{translationinftyHelson} together with the
vector-valued variant of \cite[Proposition 2.4]{Schoolmann} (its proof follows word by word from the scalar case) we conclude, that
$(\Phi(D^{j_{k}}_{\varepsilon}))$ converges in $\mathcal{D}_{\infty}(\lambda,\mathcal{H}_{p}(\lambda))$ for every $\varepsilon>0$  as $k\to \infty$. Hence, the sequence $(D_{\varepsilon}^{j_{k}})$ forms a Cauchy sequence in $\mathcal{H}_{p}(\lambda)$ with limit $D_{\varepsilon}$, and the proof is complete.
\end{proof}

\smallskip

\subsection{Nth abschnitte}
Let $H_{\infty}(B_{c_{0}})$ denote the Banach space of of all holomorphic and bounded functions on the open unit ball $B_{c_{0}}$ (of the Banach space $c_{0}$ of all zero sequences). Then as proven in \cite{HLS}
 (see also \cite[Theorem 5.1]{Defant}) there is an isometric bijection
\begin{equation} \label{HLSHelson}
H_{\infty}(B_{c_{0}}) \to H_{\infty}(\T^{\infty}), ~~ F\mapsto f,
\end{equation}
which preserves the Taylor and Fourier coefficients in the sense that $c_{\alpha}(F)=\widehat{f}(\alpha)$ for all multi indices $\alpha$.

Recall, that $F\colon B_{c_{0}} \to \C$ belongs to $H_{\infty}(B_{c_{0}})$ if and only if $F$ is continuous and all its restrictions $F_{N}\colon \mathbb{D}^{N} \to \mathbb{C}$ belong to $H_{\infty}(\mathbb{D}^{N})$ with $\sup_{N} \|F_{N}\|_{\infty}<\infty$ (see e.g. \cite[Corollary 2.22]{Defant}). By the Bohr map (\ref{BohrmapHelson}) and (\ref{HLSHelson}) this result transfers to ordinary Dirichlet series: A Dirichlet series $D=\sum a_{n}n^{-s}$ belongs to $\mathcal{D}_{\infty}((\log n))$ if and only if for every $N$ its so-called $N$th abschnitt, that is $D|_{N}=\sum a_{n}n^{-s}$, where the sum is taken  over all natural numbers which only have the first $N$ prime numbers as divisors, belong to $\mathcal{D}_{\infty}((\log n))$ with $\sup_{N} \|D|_{N}\|_{\infty} <\infty$ (see also \cite[Corollary 3.10]{Defant}).

This result extends to general Dirichlet series. To understand this let us recall, that for  every frequency $\lambda$ there is another real sequence $B=(b_{n})$ such that for  every $n$ there are finitely many rationals $q_{1}^{n}, \ldots q_{k}^{n}$ such that
\begin{equation*}
\lambda_{n}=\sum q_{j}^{n} b_{n}.
\end{equation*}
In this case, we call  $B$  basis, and $R=(q^{n}_{j})_{n,j}$  Bohr matrix of $\lambda$. Moreover, we write $\lambda=(R,B)$, whenever $\lambda$ decomposes with respect to a basis $B$ with Bohr matrix $R$, and note that every $\lambda$ allows a subsequence which is a basis $B$ for $\lambda$.

Suppose that $\lambda=(R,B)$ and let $D\in \mathcal{D}(\lambda)$. Then the Dirichlet series $D|_{N}=\sum a_{n}(D)e^{-\lambda_{n}s}$,
where $a_{n}(D)\ne 0$ implies that $\lambda_{n}\in \operatorname{span}_{\mathbb{Q}}(b_{1},\ldots, b_{N})$, is denoted as the $N$th abschnitt of $D$.

A consequence of Theorem \ref{MontelHelson} gives an improvement of \cite[Theorem 4.22]{DefantSchoolmann2}.

\begin{Theo} Assume that Bohr's theorem holds for $\lambda$, $1\le p \le \infty$ and $D=\sum a_{n}e^{-\lambda_{n}s}$. Then $D\in \mathcal{H}_{p}(\lambda)$ if and only if
its $N$th abschnitte $D|_{N} \in \mathcal{H}_{p}(\lambda)$  with $\sup_{N} \|D|_{N}\|_{p}<\infty$. Moreover, in this case
$\|D\|_{p}=\sup \|D|_{N}\|_{p}$, and the same results holds true, whenever  we replace $\mathcal{H}_{p}(\lambda)$ by $\mathcal{D}_{\infty}(\lambda)$.
\end{Theo}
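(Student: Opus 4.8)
Since $\lambda$ satisfies Bohr's theorem, Theorem~\ref{equivalenceHelson} gives $\mathcal{D}_{\infty}(\lambda)=\mathcal{H}_{\infty}(\lambda)$ isometrically, so the assertion for $\mathcal{D}_{\infty}(\lambda)$ is precisely the case $p=\infty$, and it is enough to argue for $\mathcal{H}_{p}(\lambda)$. Write $\lambda=(R,B)$ with basis $B=(b_{n})$. The forward implication, together with one of the two norm inequalities, is obtained exactly as in \cite[Theorem 4.22]{DefantSchoolmann2}: on a suitable $\lambda$-Dirichlet group $(G,\beta)$ adapted to $B$ the $N$th abschnitt operator $D\mapsto D|_{N}$ is induced by a norm-one conditional-expectation projection $\mathbb{E}_{N}$ on $L_{p}(G)$ (averaging over a closed subgroup of $G$), which maps $H_{p}^{\lambda}(G)$ into itself and satisfies $\Bcal(\mathbb{E}_{N}f)=D|_{N}$ whenever $\Bcal(f)=D$. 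Hence, for $D\in\mathcal{H}_{p}(\lambda)$, every abschnitt $D|_{N}$ lies in $\mathcal{H}_{p}(\lambda)$ with $\|D|_{N}\|_{p}\le\|D\|_{p}$, so in particular $\sup_{N}\|D|_{N}\|_{p}\le\|D\|_{p}<\infty$.

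For the converse, assume $D|_{N}\in\mathcal{H}_{p}(\lambda)$ for all $N$ and $M:=\sup_{N}\|D|_{N}\|_{p}<\infty$. Then $(D|_{N})_{N}$ is a bounded sequence in $\mathcal{H}_{p}(\lambda)$, and this is where the improvement enters: we apply the new Montel-type Theorem~\ref{MontelHelson} (available for every $\lambda$ satisfying Bohr's theorem, whereas the \cite[Theorem 4.19]{DefantSchoolmann2} used in \cite[Theorem 4.22]{DefantSchoolmann2} required much more) to obtain a subsequence $(D|_{N_{k}})_{k}$ and some $E\in\mathcal{H}_{p}(\lambda)$ with $(D|_{N_{k}})_{\varepsilon}\to E_{\varepsilon}$ in $\mathcal{H}_{p}(\lambda)$ for every $\varepsilon>0$. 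Since each $\lambda_{n}$ is a finite $\mathbb{Q}$-combination of the $b_{j}$, it eventually lies in $\spa_{\mathbb{Q}}(b_{1},\dots,b_{N})$, and hence $a_{n}(D|_{N})\to a_{n}(D)$ for every $n$; as the coefficient functionals are contractive on $\mathcal{H}_{p}(\lambda)$, comparing coefficients gives $e^{-\varepsilon\lambda_{n}}a_{n}(E)=a_{n}(E_{\varepsilon})=\lim_{k}e^{-\varepsilon\lambda_{n}}a_{n}(D|_{N_{k}})=e^{-\varepsilon\lambda_{n}}a_{n}(D)$ for all $n$ and all $\varepsilon>0$. Thus $E$ and $D$ have the same Dirichlet coefficients, whence $D=E\in\mathcal{H}_{p}(\lambda)$.

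It remains to identify the norm. For each $\varepsilon>0$, using that horizontal translation is contractive on $\mathcal{H}_{p}(\lambda)$ (convolution with the Poisson measure $p_{\varepsilon}$, of norm $1$), we obtain $\|D_{\varepsilon}\|_{p}=\|E_{\varepsilon}\|_{p}=\lim_{k}\|(D|_{N_{k}})_{\varepsilon}\|_{p}\le\liminf_{k}\|D|_{N_{k}}\|_{p}\le M$. Letting $\varepsilon\downarrow0$ and using $\|D_{\varepsilon}\|_{p}\uparrow\|D\|_{p}$ --- for $1\le p<\infty$ because the Poisson measures form an approximate identity on $L_{p}(G)$, and for $p=\infty$ because by Bohr's theorem $D$ converges on $[Re>0]$ to a bounded holomorphic $g$, so that $\|D_{\varepsilon}\|_{\infty}=\sup_{[Re>\varepsilon]}|g|\uparrow\sup_{[Re>0]}|g|=\|D\|_{\infty}$ --- we get $\|D\|_{p}\le M$. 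Together with $\|D|_{N}\|_{p}\le\|D\|_{p}$ from the first paragraph this yields $\|D\|_{p}=\sup_{N}\|D|_{N}\|_{p}$.

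The only genuine obstacle is the compactness step in the converse at the non-reflexive endpoints $p\in\{1,\infty\}$, where a norm-bounded sequence in $\mathcal{H}_{p}(\lambda)$ need not admit a weakly (or weak-$\ast$) convergent subsequence whose limit remains in $\mathcal{H}_{p}(\lambda)$; it is exactly Theorem~\ref{MontelHelson}, i.e.\ the hypothesis that $\lambda$ satisfies Bohr's theorem, that supplies the missing compactness. The remaining ingredients --- realizing the abschnitt as a contractive conditional expectation, comparing coefficients, and contractivity of horizontal translations --- are routine and essentially already contained in \cite{DefantSchoolmann2}.
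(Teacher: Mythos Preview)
Your proof is correct and follows essentially the same strategy as the paper's: the forward direction is referenced to \cite{DefantSchoolmann2} (the paper cites Remark~4.21 there, valid for every $\lambda$), and the converse applies the Montel-type Theorem~\ref{MontelHelson} to the bounded sequence $(D|_{N})$, extracts a subsequence and a limit $E\in\mathcal{H}_{p}(\lambda)$, and identifies $E=D$ by comparing Dirichlet coefficients. Your explicit justification of the norm identity $\|D\|_{p}=\sup_{N}\|D|_{N}\|_{p}$ via contractivity and the approximate-identity property of the Poisson semigroup is correct and simply fills in a detail the paper leaves to the reference.
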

\begin{proof}
The 'if part' precisely is Remark 4.21 from \cite{DefantSchoolmann2}, and holds true without any assumption on $\lambda$. So, suppose $D|_{N} \in \mathcal{H}_{p}(\lambda)$ for all $N$ with $\sup_{N} \|D|_{N}\|_{p}<\infty$. Then by Theorem \ref{MontelHelson} there is a subsequence $(N_{k})$ and $E\in \mathcal{H}_{p}(\lambda)$ such that $(D_{1}|_{N_{k}})$ converges to $E_{1}$ as $k \to \infty$. Comparing Dirichlet coefficients we see, that $a_{n}(E)e^{-\lambda_{n}}=a_{n}(E_{1})=a_{n}e^{- \lambda_{n}}$ and so $E=D$.
\end{proof}

\end{document}